\newcommand{\ignore}[1]{ }
\newcommand{\G}[1]{G^{(#1)}}
\theoremstyle{plain}
\newtheorem{Pocz}{Poczatek}[section]
\newtheorem{Proposition}[Pocz]{Proposition}
\newtheorem{Theorem}[Pocz]{Theorem}
\newtheorem{Lemma}[Pocz]{Lemma}
\theoremstyle{definition}
\newtheorem{Observation}[Pocz]{Observation}
\newtheorem{Notation}[Pocz]{Notation}
\newtheorem{Example}[Pocz]{Example}
\newtheorem{Definition}[Pocz]{Definition}
\newtheorem{Remark}[Pocz]{Remark}
\newtheorem{theorem*}{Theorem}
\newtheorem{theorem}{Theorem}
\newtheorem{remark}{Remark}
\newtheorem{definition}{Definition}
\newtheorem{proposition}{Proposition}
\def\CC{{\mathbb C}}
\def\ZZ{{\mathbb Z}}
\def\NN{{\mathbb N}}
\def\TT{{\mathbb T}}
\def\interval{[0,1]}
\numberwithin{equation}{section}
\author{Kyle ~ Austin}
\thanks{The first author was funded by the Israel Science Foundation (grant No. 522/14).}
\address{Ben Gurion University of the Negev}
\email{ksaustin88@gmail.com}
\author{Atish ~ Mitra}
\address{Montana Tech of the University of Montana}
\email{atish.mitra@gmail.com}
\title[Groupoid Models of $C^*$-algebras and The  Gelfand Functor]%
  {Groupoid Models of $C^*$-algebras and The  Gelfand Functor}
\date{ \today
}
\keywords{}
\subjclass[2000]{Primary 54F45; Secondary 55M10}
\begin{document}

\maketitle

\begin{abstract}
We construct a large class of morphisms, which we call partial morphisms, of groupoids that induce  $*$-morphisms of maximal and minimal groupoid $C^*$-algebras. We show that the assignment of a groupoid to its maximal (minimal) groupoid $C^*$-algebra and the assignment of a partial morphism to its induced morphism are functors (both of which extend the Gelfand functor). We show how to geometrically visualize lots of $*$-morphisms between groupoid $C^*$-algebras. As an application, we construct, without any use of the classification theory, groupoid models of the entire inductive systems used in the original constructions of the Jiang-Su algebra $\mathcal{Z}$ and the Razak-Jacelon algebra $\mathcal{W}$. Consequently, the inverse limit of the groupoid models for the aforementioned systems are models for $\mathcal{Z}$ and $\mathcal{W}$, respectively.
\end{abstract}

\section{Introduction}

Modeling $C^*$-algebras using groupoids and bundles over groupoids is a long established method by which one can investigate a $C^*$-algebra using ideas from geometric topology, representation theory, topological dynamics and so on. One of the most notable reasons why modeling $C^*$-algebras using groupoids is important is due to the monumental achievement of J. L. Tu in \cite{Tu} that the $C^*$-algebra of an amenable groupoid satisfies the UCT (see also \cite{BL}  by S. Barlak and X. Li for a generalization to twisted groupoid $C^*$-algebras). Another notable result is the remarkable characterization in \cite{Renault2} by J. Renault that all Cartan pairs are of the form $(C^*(G,\sigma), C_0(\G0))$  where $G$ is an \'etale groupoid and $\sigma:\G2\to \TT$ is a 2-cocycle (see also \cite{K}). In fact, while the current authors were working on this paper, X. Li in \cite{Li} showed that a unital, simple, separable, finite nuclear dimension $C^*$-algebra satisfies the UCT if and only if it has a Cartan subalgebra and hence every $C^*$-algebra in the Elliott classification program is a twisted groupoid $C^*$-algebra of an amenable groupoid. The aim of this paper is to provide a systematic approach to groupoid modeling of $C^*$-algebras by making it functorial (see Remark \ref{functoriality}) and geometric.

Let $\mathcal{G}$ denote the category of locally compact groupoids (equipped with a fixed Haar system of measures) with morphisms given by partial morphisms which will be explained in Section \ref{maintool}. $\mathcal{G}$ has a full subcategory $\mathcal{T}$ of locally compact Hausdorff spaces with partial proper continuous maps (see Section \ref{maintool} or Appendix A). Let $\mathcal{C}$ denote the category of $C^*$-algebras with $*$-morphisms and let $\mathcal{COMM}$ denote the full subcategory of $\mathcal{C}$ consisting of commutative $C^*$-algebras with $*$-morphisms. One of the main objectives of this paper is to prove the following. 

\begin{theorem}\label{majortheorem}
There are contravariant functors $\Gamma_{\text{max}}$ and $\Gamma_{\text{min}}$ from $\mathcal{G}$ to $\mathcal{C}$ such that
\begin{enumerate}
    \item $\Gamma_{\text{max}}$ and $\Gamma_{min}$ both respect finite sums and map inverse limits to direct limits.
    \item $\Gamma_{\text{max}}$ and $\Gamma_{\text{min}}$ both extend the Gelfand duality functor and, moreover, both are equivalences between $\mathcal{T}$ and $\mathcal{COMM}$ (see Remark \ref{GelfandRemark}).
\end{enumerate}
\end{theorem}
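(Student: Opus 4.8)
The plan is to build $\Gamma_{\text{max}}$ and $\Gamma_{\text{min}}$ on objects in the standard way—send a locally compact groupoid $G$ with Haar system to the maximal (resp. reduced) completion of the convolution algebra $C_c(G)$—and then establish functoriality by showing that each partial morphism $\phi$ (as defined in Section \ref{maintool}) induces a $*$-homomorphism on the level of $C_c$ that is continuous for both the maximal and reduced norms. First I would verify that a partial morphism restricts to an honest continuous groupoid homomorphism on an open subgroupoid whose unit space is an open subset of $\G0$, and that "partial" precisely accounts for the failure of the map to be globally defined; the induced map on functions is then given by a combination of pullback along the homomorphism and extension-by-zero off the relevant open piece. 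I would check that this assignment is compatible with convolution and involution (a routine but bookkeeping-heavy computation with the Haar systems, where one needs a Radon–Nikodym-type comparison of the two Haar systems along $\phi$), and that it is contractive for the universal norm because it carries representations back to representations; contractivity for the reduced norm follows by pulling back the regular representation, again using the open-subgroupoid structure so that the relevant $L^2$-spaces embed isometrically. Functoriality (composition and identities) is then immediate from the explicit formula.

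For part (1), finite sums correspond on the groupoid side to disjoint unions $G_1 \sqcup G_2$, and one has $C_c(G_1 \sqcup G_2) = C_c(G_1) \oplus C_c(G_2)$ compatibly with convolution, so both completions split as direct sums; I would just note that partial morphisms into a disjoint union decompose coordinatewise. For the inverse-limit-to-direct-limit claim, given an inverse system $(G_i, \phi_{ij})$ with limit $G = \varprojlim G_i$, the connecting partial morphisms $G \to G_i$ induce $*$-homomorphisms $\Gamma(G_i) \to \Gamma(G)$, and I would show the induced map from $\varinjlim \Gamma(G_i)$ to $\Gamma(G)$ is an isomorphism: surjectivity because $\bigcup_i$ (pullbacks of $C_c(G_i)$) is dense in $C_c(G)$ in the inductive-limit topology—this uses that functions on the inverse limit are approximated by functions factoring through finite stages, which requires the groupoids in the system to be reasonably behaved (e.g. the connecting maps proper/surjective on units)—and injectivity/isometry by a compatibility of the maximal (resp. reduced) norms with the direct-limit norm, which for the maximal norm follows from the universal property and for the reduced norm from a direct-limit decomposition of the Hilbert-module representations.

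Part (2) is where I would invoke classical Gelfand duality directly: a locally compact Hausdorff space $X$ is a groupoid with only units, its only Haar system is counting measure on points, and $C_c(X)$ with convolution is just $C_c(X)$ with pointwise multiplication, whose maximal and reduced completions both equal $C_0(X)$. A partial proper continuous map $X \rightharpoonup Y$—defined on an open subset, proper onto its image—is exactly the data that Gelfand duality turns into a $*$-homomorphism $C_0(Y) \to C_0(X)$ (extension by zero handling the partiality, properness handling the non-unitality), and conversely every $*$-homomorphism between commutative $C^*$-algebras arises this way; so $\Gamma_{\text{max}}|_{\mathcal{T}} = \Gamma_{\text{min}}|_{\mathcal{T}}$ is the Gelfand functor and is an equivalence $\mathcal{T} \simeq \mathcal{COMM}$. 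The main obstacle I anticipate is not any single deep step but rather pinning down the correct definition of partial morphism so that all of the above goes through uniformly—in particular, isolating exactly the hypotheses on a partial morphism (openness of the domain subgroupoid, properness, the Haar-system comparison) needed to guarantee reduced-norm continuity, since the reduced norm is not functorial for arbitrary groupoid homomorphisms; getting the inverse-limit density statement to hold in the generality claimed (arbitrary inverse systems in $\mathcal{G}$, not just the concrete ones used for $\mathcal{Z}$ and $\mathcal{W}$) may force mild restrictions that should be recorded in Section \ref{maintool}.
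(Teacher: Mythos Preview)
Your overall strategy matches the paper's: define the functors on objects by maximal/reduced completions of $C_c(G)$, on partial morphisms by pullback-then-extend-by-zero, verify $*$-compatibility and norm continuity (Propositions \ref{inducedmorphism} and \ref{reduced}), handle disjoint unions directly, push inverse limits to direct limits via density of $\bigcup_\alpha C_c(G_\alpha)$ in $C_c(G)$ (Theorem \ref{continuousfunctor}), and recover full Gelfand duality on $\mathcal{T}$ (Appendix A). Two points where your sketch diverges from what the paper actually does are worth flagging.

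First, there is no Radon--Nikodym comparison. The paper's definition of \emph{Haar system preserving} (Definition \ref{haarpreserving}) is the exact condition $\phi_*\mu^u = \nu^{\phi(u)}$, and this equality is what makes the convolution computation in Proposition \ref{inducedmorphism} go through cleanly and makes $\text{Ind}(\delta_{\phi(u)}) = \text{Ind}(\delta_u)\circ\phi^*$ in Proposition \ref{reduced}. Anticipating a modular correction is unnecessary here and would complicate the argument; the ``right'' hypotheses you worry about isolating are simply this pushforward identity together with openness of the domain subgroupoid.

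Second, you implicitly assume the inverse limit $G=\varprojlim G_\alpha$ exists in $\mathcal{G}$ and carries a Haar system, but this is not automatic for systems with \emph{partial} bonding maps and is a substantial part of the paper (Theorem \ref{mainthm.limitexists} plus Appendix B). The construction realises $G$ as an increasing union of honest inverse limits $Z_\beta$ (threads starting at stage $\beta$), and the Haar system on $G$ is assembled from those on the $Z_\beta$ via Proposition \ref{getthemmeasurestoworkout}. Your density argument for $\varinjlim \Gamma(G_\alpha)\cong \Gamma(G)$ is correct once this is in place, and the paper does impose the mild restriction you anticipate: the bonding maps are required to be \emph{surjective} partial morphisms (Definition \ref{geninvsystem}), which is what guarantees the induced $*$-morphisms are isometric and the direct limit computation goes through.
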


\begin{Remark}
We even prove a generalization of Theorem 1 to include cocycles, see Subsection \ref{twistedfunctor}.
\end{Remark}

Our motivation for proving Theorem \ref{majortheorem} was to prove the following Theorem ( proven in Section \ref{mainsection}).

\begin{Theorem}\label{JSandRJ}[Theorem \ref{JiangSu}, Theorem \ref{RJ}]
There exists inverse sequences of groupoids whose image under $\Gamma_{\text{min}}$ (or $\Gamma_{\text{max}}$) are exactly the inductive systems used in the construction of $\mathcal{Z}$ in \cite{JS} and $\mathcal{W}$ in \cite{J}. The inverse limits of these inverse sequences are groupoid models for $\mathcal{Z}$ and $\mathcal{W}$, respectively.
\end{Theorem}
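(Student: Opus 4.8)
The plan is to write down, for each of the two inductive systems, an explicit inverse sequence $G_1 \xleftarrow{\psi_1} G_2 \xleftarrow{\psi_2} \cdots$ of groupoids in $\mathcal{G}$, to identify its image under $\Gamma_{\text{min}}$ term by term and morphism by morphism with the given system, and then to read off the statement about the inverse limit from part (1) of Theorem \ref{majortheorem}. Every groupoid that appears is built from trivial bundles of pair groupoids over an interval, glued to finite pair groupoids at the two endpoints; such groupoids are amenable, so $\Gamma_{\text{min}}$ and $\Gamma_{\text{max}}$ agree on them and on the partial morphisms between them, which is what allows either functor to be used.

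\emph{Groupoid models of the building blocks.} Recall that in \cite{JS} the building blocks are the prime dimension drop algebras $Z_{p,q} = \{ f \in C([0,1], M_{pq}) : f(0) \in M_p \otimes 1_q,\ f(1) \in 1_p \otimes M_q \}$, and in \cite{J} the building blocks are the Razak blocks, which have the same shape, namely continuous $M_m$-valued functions on $[0,1]$ whose endpoint values are constrained to prescribed (possibly corner) matrix subalgebras. Write $R_n$ for the pair groupoid on $\{1,\dots,n\}$, so that $M_n = \Gamma_{\text{min}}(R_n)$ and $C([0,1], M_{pq}) = \Gamma_{\text{min}}([0,1] \times R_{pq})$, where $[0,1]$ is regarded as a trivial (unit) groupoid. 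Identifying $\{1,\dots,pq\}$ with $\{1,\dots,p\} \times \{1,\dots,q\}$, the inclusion $M_p \hookrightarrow M_{pq}$, $a \mapsto a \otimes 1_q$, is $\Gamma_{\text{min}}$ of the groupoid morphism $\mu_0 \colon R_{pq} \to R_p$ fibred over the $q$-to-$1$ projection of unit spaces, and similarly for $1_p \otimes M_q$. Let $G_{p,q}$ be the groupoid obtained from $[0,1] \times R_{pq}$ by collapsing the fibre over $0$ along $\mu_0$ and the fibre over $1$ along the analogous $\mu_1$ — equivalently, the pushout in $\mathcal{G}$ of these two morphisms against the inclusion of the endpoint fibres. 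One checks that $\Gamma_{\text{min}}(G_{p,q}) \cong Z_{p,q}$, the point being that the pushout square defining $G_{p,q}$ is carried by the functor to the pullback square $C([0,1], M_{pq}) \times_{M_{pq} \oplus M_{pq}} (M_p \oplus M_q)$ presenting $Z_{p,q}$; the Razak blocks are treated the same way, with a corner subalgebra at one endpoint corresponding to collapsing only part of that endpoint fibre.

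\emph{Connecting maps as partial morphisms.} Up to conjugation by a continuous path of unitaries, each connecting homomorphism in \cite{JS} (and in \cite{J}) is of the standard diagonal form $f \mapsto \mathrm{diag}(f \circ \lambda_1, \dots, f \circ \lambda_k)$ for continuous eigenvalue functions $\lambda_i \colon [0,1] \to [0,1]$ whose endpoint behaviour is chosen precisely so that the source boundary conditions are carried into the target ones. I will build a partial morphism $\psi_n \colon G_{n+1} \to G_n$ with $\Gamma_{\text{min}}(\psi_n)$ equal to the $n$-th connecting map. On unit spaces, $\psi_n$ is assembled from the multivalued, partially defined assignment $x \mapsto \{\lambda_1(x), \dots, \lambda_k(x)\}$ — it is exactly for such assignments that partial morphisms, rather than honest groupoid morphisms, are needed — bundled with the finite pair-groupoid data recording the matrix sizes and block positions; the endpoint conditions on the $\lambda_i$ are exactly what is required for $\psi_n$ to be compatible with the collapses performed in the previous step, so that it descends to $G_{n+1}, G_n$. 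The residual conjugation by a unitary path is absorbed using the cocycle-twisted refinement of $\Gamma$ from Subsection \ref{twistedfunctor} (or, where the path can be taken locally constant, directly into the combinatorics). Verifying that the resulting partial morphism induces the connecting homomorphism on the nose — and in particular that its behaviour over the endpoints of $[0,1]$, where the source and target boundary conditions interact, is correct — is the main technical point.

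\emph{Passing to the limit.} By construction, $(\Gamma_{\text{min}}(G_n), \Gamma_{\text{min}}(\psi_n))$ is isomorphic as a directed system to the Jiang--Su system of \cite{JS} (respectively the Razak--Jacelon system of \cite{J}). Forming the inverse limit $\varprojlim G_n$ in $\mathcal{G}$ (which exists, the unit spaces being compact metrizable) and applying part (1) of Theorem \ref{majortheorem}, we obtain $\Gamma_{\text{min}}(\varprojlim G_n) \cong \varinjlim \Gamma_{\text{min}}(G_n)$, which is $\mathcal{Z}$ (respectively $\mathcal{W}$) by \cite{JS} (respectively \cite{J}). Hence $\varprojlim G_n$ is a groupoid model of $\mathcal{Z}$ and the analogous inverse limit is a groupoid model of $\mathcal{W}$, obtained with no input from the classification theory.
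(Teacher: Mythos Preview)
Your outline gets the architecture right: build groupoid models of the individual building blocks by collapsing the fibres of $[0,1]\times R_{pq}$ at the endpoints, model each connecting map as a partial morphism, and apply Theorem~\ref{majortheorem} to the inverse limit. The construction of $G_{p,q}$ you describe is essentially the paper's Lemma~\ref{dimensiondropmodel}, and the passage to the limit is Theorem~\ref{continuousfunctor}. Two points, however, are either wrong or too vague to count as a proof.

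\medskip

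\textbf{The unitary conjugation cannot be handled by 2-cocycles.} You propose to ``absorb'' the conjugation $f\mapsto u_t^*\,\mathrm{diag}(f\circ\xi_1,\dots,f\circ\xi_k)\,u_t$ using the twisted functor $\Gamma^\sigma$ of Subsection~\ref{twistedfunctor}. That framework concerns $\TT$-valued 2-cocycles $\sigma:G^{(2)}\to\TT$, which deform the convolution product and produce \emph{different} algebras; they do not implement conjugation by a path of $n\times n$ unitaries, which is an inner $*$-automorphism of $C([0,1],M_n)$. Nor can the path in the Jiang--Su or Jacelon construction be taken locally constant: $u_t$ must connect distinct permutation matrices in $U_{p_{i+1}q_{i+1}}$. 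The paper's device for this step is entirely different and is the main technical novelty of Section~\ref{basicsofmodeling}: one introduces the \emph{groupoid Steifel variety} $V_n$ (Subsection~\ref{addthetwist}), a groupoid indexed by orthonormal frames of $\CC^n$, inside which conjugation by any unitary becomes an honest Haar-system-preserving groupoid isomorphism. The successive models $H_{i+1}$ are then built as subquotients of the twisted interval groupoid $G^{uW}_{p_{i+1}q_{i+1}}([0,1])\subset V_{p_{i+1}q_{i+1}}\times[0,1]$, so that the conjugation is realised geometrically rather than via any cocycle.

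\medskip

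\textbf{The partial morphism is not a ``multivalued assignment''.} A partial morphism $(\phi,K):G\to H$ is a genuine Haar-preserving groupoid morphism $\phi$ defined on an open Haar subgroupoid $K\subset G$; nothing about it is multivalued. What encodes the $k$ eigenvalue functions $\xi_1,\dots,\xi_k$ is that the auxiliary groupoid $K$ is (a quotient of) a disjoint union of $k$ stretched pieces $L^1,\dots,L^k$ of $H_i$, one for each $\xi_j$, with identifications at the endpoints dictated by the boundary conditions. The bonding map $\phi_i^{i+1}:H_{i+1}\to H_i$ is then the composite of a partial quotient $q:H_{i+1}\to K'$, the Steifel-variety isomorphism $K'\cong K$, and the obvious Haar-preserving map $\Phi:K\to H_i$ sending each $L^j$ into $H_i$ via $\xi_j$. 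Checking that this composite induces the Jiang--Su (respectively Jacelon) connecting map on the nose, including at the endpoints, is the substance of the proofs of Theorems~\ref{JiangSu} and~\ref{RJ}; your sketch does not engage with this.
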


\begin{Remark}\label{GelfandRemark}
Recall that the classical Gelfand duality theorem does \textbf{not} give an equivalence of the categories of locally compact spaces with proper continuous maps and the commutative $C^*$-algebras with $*$-morphisms. Only $*$-morphisms which map approximate units to approximate units can be modeled by pullbacks of proper continuous functions, see Appendix A for a full explanation with examples. Full equivalence using partially defined maps has been known for a long time, but we thought it prudent to point out that our functors $\Gamma_{min}$ and $\Gamma_{max}$ have the full equivalence built into them.
\end{Remark}

\begin{Remark}\label{functoriality}
Functoriality from groupoids to $C^*$-algebras has been noticed in the past by R. Holkar in \cite{H} (also see \cite{BS1} for a related concept), but the functoriality in this paper only seems to be related to these works in a parallel way. In general, morphisms of groupoids are not a special case of groupoid actions and actions of groupoids on other groupoids cannot be interpreted as functors between those groupoids. Therefore one cannot think of either concept as a subcase of the other. The concept of Haar system preserving morphism (see Definition \ref{haarpreserving}) does seem to have an uncanny resemblence to condtion (iv) of the definition of topological correspondences (Definition 2.1 in \cite{H}). 
\end{Remark}

\begin{Remark}\label{relatedthings}
The following is a list of consequences for $\Gamma_{\text{min}}$ and $\Gamma_{\text{max}}$.
\begin{enumerate}
    \item A. Buss and A. Sims in \cite{BS} show that $\Gamma_{\text{min}}$ and $\Gamma_{\text{max}}$ are not surjective  as every groupoid $C^*$-algebra is isomoprhic to its opposite algebra and there are $C^*$-algebras which are not isomorphic to their opposite algebras.
    \item By work \cite{EP} of  R. Exel and M. Pardo, the class of Katsura algebras (and hence also Kirchberg algebras that satisfy the UCT) are in the range of both $\Gamma_{\text{min}}$ and $\Gamma_{\text{max}}$ and hence the Kirchberg algebras are in the range of both $\Gamma_{\text{min}}$. $\Gamma_{\text{max}}$. 
    \item I. Putnam in \cite{P} shows that if $(G_0,G_1)$ are a pair of groups with $G_0$ a simple, acyclic dimension group and $G_1$ a countable, torsion-free, abelian group, then there exists  a minimal, amenable, \'etale equivalence relation R on a Cantor set whose associated $C^*$-algebra $\Gamma_{\text{min}}(R)$ is Elliott classifiable and, moreover has $K_1(\Gamma_{\text{min}}(R)) = G_1$ and $K_0(\Gamma_{\text{min}}(R))= G_0$.
    \item More generally, X. Li in \cite{Li} shows that all Elliott classifiable $C^*$-algebras are in the range of $\Gamma^{\sigma}_{\text{min}}$ (and also for $\Gamma_{\text{max}}^\sigma$).
    \item In \cite{DPS}, R. Deeley, K. Strung, and I. Putnam also construct a groupoid model for $\mathcal{Z}$ using topological dynamics; i.e. they show that $\mathcal{Z}$ is in the image of both $\Gamma_{\text{min}}$ and $\Gamma_{\text{max}}$.
\end{enumerate}
\end{Remark}

In the process of completing this paper, X. Li in \cite{Li} posted a paper that shows that $\mathcal{Z}$ and $\mathcal{W}$ are twisted \'etale groupoid $C^*$-algebras and some of his techniques are quite similar to ours. His method of modeling comes from the works \cite{BL} and \cite{BL2}. Some notable differences with the modeling done in those works with the modeling done in the current work are
\begin{enumerate}
    \item We work with both minimal and maximal completions of groupoid convolution algebras whereas they work only with reduced completions.
    \item Our modeling works for groupoids with general Haar systems of measures and not just for \'etale groupoids.
    \item Our concept of partial morphisms allows us to model exactly lots of morphisms of $C^*$-algebras whereas they have to break $*$-morphisms into two pieces  and model each piece separately.
\end{enumerate}

Our groupoid models for $\mathcal{Z}$ and $\mathcal{W}$ are isomorphic to the models constructed in \cite{Li}, but it is worth noting that both our models differ considerably to the models of R. Deeley, I. Putnam, and K. Strung in \cite{DPS}. One major difference is that we make no use of the classification theory whereas the authors in \cite{DPS} make critical use of it (see the proof of Proposition 2.8 in \cite{DPS} for example).

\section{Preliminaries on Groupoid \texorpdfstring{$C^*$}{C*}-Algebras}

Recall that a \emph{groupoid} is a small category in which  every morphism is invertible. As is standard througout the literature, we adopt the ``arrows only'' view of category theory so we view a groupoid $G$ as just a collection of arrows. We denote the object space of $G$ by $ \G0$ and the arrow space of $G$ by $\G1$. 

\begin{Notation}
We will denote arrows of groupoids using the letters $x,y$ and $z$ and we will use $u, v$ and $w$ to denote objects. 
\end{Notation}

Recall that there are maps $s$ and $r$ mapping $G$ to $\G0$ by taking each arrow to its source or range, respectively. For each $u\in \G0$, we let $G^u$ denote the fiber of $u$ under the range map and we let $G_u$ denote the fiber of $u$ under the source map; i.e. $ G^u= \{x\in G:r(x)=u\}$ and $G_u= \{x\in G:s(x)= u\}$.  A \emph{topological groupoid} is a groupoid $G$ equipped with locally compact and Hausdorff topology that makes the composition of arrows and the inversion of arrows continuous and, furthermore, carries a fixed Haar system of measures. Recall that a \emph{Haar system of measures} for $G$ is a collection $\{\mu^u:u\in \G0\}$ of positive Radon (see Remark \ref{Radon}) measures on $G$ such that 
\begin{enumerate}
\item $\mu^u$ is supported on $G^u$.
\item  For each $f\in C_c(G)$, the function $u\to \int_{G}f(x)d\mu^u(x)$ is continuous on $\G0$.
\item For all $x\in \G1$ and $f\in C_c(G)$, the following equality holds: 
$$\int_{G}f(y)d\mu^{t(x)}(y) = \int_{G}f(xy)d\mu^{s(x)}(y).$$
\end{enumerate}

\begin{Remark}\label{Radon}
Because the concept of Radon measures does not seem to be totally agreed upon throughout the literature, we define a \textbf{Radon measure} $\mu$ on a locally compact Hausdorff space $X$ to be locally finite Borel measure such that, for every measureable set $A\subset X$ with $\mu(A)<\infty$ and for every $\epsilon>0$, there exists a compact subset $K\subset A \subset X$ and an open subset $A\subset U\subset X$ such that $\mu(A\setminus K)<\epsilon$ and $\mu(U\setminus A)<\epsilon$. Recall that if $X$ is $\sigma$-compact then the requirement $\mu(A)<\infty$ can be removed from the above condition. 

By the Riesz-Markov-Kakutani theorem, the collection of positive Radon measures on a locally compact space $X$ are in one-to-one correspondence with positive functionals $C_c(X)\to \CC$.
\end{Remark}

A groupoid $G$ is \textbf{principal} if the collection $G(u) = \{x\in G: s(x) = t(x) = u\}$  contains only one element, namely $u$, for every $u\in \G0$. Recall from \cite{Buneci} that a locally compact and principal groupoid $G$ is isomorphic to an equivalence relation on its object space $\G0$, with the minor modification that the topology one has to take on $\G0\times \G0$ may be finer than the product topology.
A groupoid $G$ is said to be \textbf{r-discrete} if the object space $\G0$ is open in $G$. Recall that a groupoid $G$ is \textbf{\'etale} if one of the following equivalent conditions holds, see Proposition 2.8 in Chapter 1 of (\cite{Renault})
\begin{enumerate}
    \item The range map is a local homeomorphism
    \item $G$ is $r$-discrete and has an open range map.
    \item $G$ is $r$-discrete and has any Haar system of measures.
    \item $G$ is $r$-discrete and counting measures form a Haar system for $G$.
\end{enumerate}

Let $G$ be a  topological groupoid with Haar system of measures $\{\mu^u:u\in \G0\}$, a function $f\in C_c(G)$ defines a continuous field of kernels $K_u(x,y):G^u\times G^u\to \CC$ where $k_u(x,y) = f(xy^{-1})$. The following formula shows how to interpret the convolution product $f*g$ by integrating $g\in C_c(G)$ against the continuous field of kernels associated to $f$.

$$ f*g(x) = \int_{G} f(xy^{-1})g(y)d\mu^{s(x)}(y) $$ 

 With $*$ as multiplication and the following  adjoint operation and norm $\|\cdot\|_I$, $C_c(G)$ becomes a topological *-algebra (with inductive limit topology).

$$ f^*(x) = \overline{f(x^{-1})}$$

$$\|f\|_I = \text{max} \left\{\sup_{u \in \G0} \int_{G} |f(g)|\,d\mu^u(g), \sup_{u \in \G0} \int_{G} |f(g^{-1})|\,d\mu^u(g)\right\}.$$

The \textbf{maximal (or full) groupoid $C^*$-algebra of $G$}, denoted by $C^*(G)$, is defined to be the completion of $C_c(G)$ with the following norm
$$\|f\|_{\text{max}} = \sup_{\pi} \|\pi(f)\|,$$
where $\pi$ runs over all continuous (with respect to the norm $\|\cdot\|_I$) *-representations of $C_c(G)$.  The \textbf{minimal (or reduced groupoid $C^*$-algebra} $C^*_r(G)$ is defined to be the smallest completion of $C_c(G)$ such that all the representations $\{\text{Ind}(\delta_u):u\in \G0\}$ are continuous where $\text{Ind}(\delta_u)$ is the representation defined as follows: Firstly, we use the standard method of turning the Dirac measures $\delta_u$ for $u\in \G0$ into representations by first defining the new measure $\theta_u$ to be the unique positive Radon measure on $G$ defined by 

$$\int_G f d\theta_z = \int_{\G0} \int_G f d\mu^wd\delta_u(w).$$

Now, we define a representation $\text{Ind}(\delta_z):C_c(G) \to B(L^2(G,\theta_z))$ by 

$$\text{Ind}(\delta_u)(f)(\xi)(x) = \int_{G} f(xy^{-1})\xi(y)d\theta_u(y)$$

The reduced norm $||\cdot||_r$ on $C_c(G)$ is defined by $||f||_r = \sup_{u\in \G0} ||\text{Ind}(\delta_u)(f)||$. Note that $||\cdot ||_r \leq ||\cdot||_{\text{max}}$. 

\begin{Definition}\label{haarpreserving}
Let $G$ and $H$ be locally compact groupoids with Haar systems of measures $\{\mu^u:u\in \G0\}$ and $\{\nu^v:v\in H^{(0)}\}$.  A \textbf{groupoid morphism} $\phi:G\to H$ is a proper, continuous, and covariant (see Remark \ref{contravariant}) functor. $\phi$ is said to be \textbf{Haar system preserving} if, for all $v\in H^{(0)}$ and for all $u \in \phi^{-1}(v)$, we have that  $\phi_*\mu^u = \nu^v$.
\end{Definition}

\begin{Remark}\label{contravariant}
One could also work with with contravariant functors, however one would just need to incorperate the modular function into the induced morphisms (as a contravariant functor $G\to H$ is equivalent to a covariant functor $G^{op}\to H$). For sake of keeping this paper easier to read, we decided to leave this case out.
\end{Remark}

\begin{Remark}\label{specialcase}
We make the following observations about the geometric nature of this seemingly analytic nature of Haar system preserving morphisms. 
\begin{enumerate}
    \item The Haar system preserving condition looks purely analytical, but the necessity of the condition is really just a consequence of the fact that groupoids inherit a lot of structure from the range (or source) fibration. For example, the convolution product relies entirely on the structure of the natural action of the groupoid on itself as a bundle of measure spaces over the range map. The Haar system preserving condition really just says that the morphism must preserve the continuous field of measure space nature of groupoids with Haar systems; i.e.  they are maps of bundles that must send fibers to whole fibers. In Section \ref{maintool}, we will show how to make this concept more flexible by considering only partially defined functions.
    \item The first author and M. Georgescu show in \cite{AG} that any $\sigma$-compact groupoid admits enough Haar system preserving morphisms to second countable groupoids to completely determine its original topological groupoid structure. In particular, they show that there are lots of nontrivial examples of Haar system preserving morphisms.
    \item If $G$ and $H$ are \'etale groupoids then $\phi$ is Haar system preserving if and only if for each $u\in \G0$, $\phi$ bijectively maps the set $G^u$ to $H^{\phi(u)}$. 
\end{enumerate}
\end{Remark}

The following comes from Proposition 3.2 in \cite{AG}, but we decided to add the proof for completeness.

\begin{Proposition}\label{inducedmorphism} 
Let $q:G\to H$ be a Haar system preserving morphism of locally compact Hausdorff groupoids with Haar systems $\{\mu^u:u\in \G0\}$ and $\{\nu^v:v\in H^{(0)}\}$, respectively. The pullback map $q^*:C_c(H)\to C_c(G)$ is a *-morphism of topological *-algebras and is $I$-norm decreasing.  If additionally $q$ is surjective, then $q^*$ is I-norm preserving and hence extends to an isometric $*$-embedding with respect to maximal completions. 
\end{Proposition}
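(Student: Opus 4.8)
The plan is to verify directly, using the defining integral formulas, that $q^*$ respects the convolution product and the adjoint, and that it does not increase the $I$-norm; the surjective case then follows by producing a section-like averaging argument (or, more simply, by exhibiting enough functions on $G$ supported near a fiber to recover the $I$-norm of a given $f\in C_c(H)$).

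First I would check that $q^*$ lands in $C_c(H)\to C_c(G)$: since $q$ is proper and continuous, $f\circ q$ is continuous with compact support whenever $f\in C_c(H)$, and $q^*$ is clearly continuous for the inductive limit topologies (it maps $C_c(K')$ into $C_c(q^{-1}(K'))$ for compact $K'\subseteq H$). Next, the $*$-algebra structure. For the adjoint this is immediate: $(f\circ q)^*(x)=\overline{f(q(x^{-1}))}=\overline{f(q(x)^{-1})}=(f^*\circ q)(x)$ because $q$ is a functor. For the convolution, fix $f,g\in C_c(H)$ and $x\in G$; I would write out
$$ (q^*f * q^*g)(x) = \int_G f(q(xy^{-1}))\,g(q(y))\,d\mu^{s(x)}(y), $$
substitute $q(xy^{-1})=q(x)q(y)^{-1}$, and then invoke the Haar system preserving hypothesis: pushing the measure $\mu^{s(x)}$ forward along $q$ gives exactly $\nu^{s(q(x))}$ (note $q(s(x))=s(q(x))$), so the change-of-variables formula for pushforward measures turns the right-hand side into
$$ \int_H f(q(x)\,z^{-1})\,g(z)\,d\nu^{s(q(x))}(z) = (f*g)(q(x)) = q^*(f*g)(x). $$
The only subtlety here is that the integrand $y\mapsto f(q(x)q(y)^{-1})g(q(y))$ is genuinely a function pulled back from $H$ along $q$ restricted to the fiber $G^{s(x)}$ — which it is, since everything is expressed through $q(y)$ — so the pushforward identity applies verbatim.

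For the $I$-norm estimate, note that for each $u\in\G0$,
$$ \int_G |f(q(g))|\,d\mu^u(g) = \int_H |f|\,d(q_*\mu^u) = \int_H |f|\,d\nu^{q(u)} \le \sup_{v\in H^{(0)}}\int_H |f|\,d\nu^v, $$
and the same for the $g\mapsto g^{-1}$ term (using that $q$ commutes with inversion), so $\|q^*f\|_I\le\|f\|_I$. If $q$ is surjective, then every $v\in H^{(0)}$ is $q(u)$ for some $u\in\G0$, so the supremum defining $\|f\|_I$ is attained (or approached) over the image, giving $\|q^*f\|_I = \|f\|_I$; hence $q^*$ is isometric for the $I$-norm. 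Finally, $q^*$ intertwines representations: any $I$-norm continuous $*$-representation $\pi$ of $C_c(G)$ precomposed with $q^*$ is an $I$-norm continuous $*$-representation of $C_c(H)$, and conversely every representation of $C_c(H)$ arises this way up to the isometry just established, so $\|q^*f\|_{\max} = \|f\|_{\max}$ and $q^*$ extends to an isometric $*$-embedding $C^*(H)\hookrightarrow C^*(G)$.

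The main obstacle I anticipate is the careful bookkeeping in the convolution computation — specifically making sure that the pushforward-of-measure identity $q_*\mu^{s(x)}=\nu^{s(q(x))}$ is applied to an integrand that really is constant on $q$-fibers in the relevant sense, and confirming that $q$ being a proper functor guarantees $q^{-1}$ of a compact set is compact so that all supports stay compact. The representation-theoretic half of the surjective claim is routine once the $I$-norm isometry is in hand, since $\|\cdot\|_{\max}$ is the supremum over all $I$-norm bounded $*$-representations and precomposition with a surjection sets up a bijection between the relevant representation classes.
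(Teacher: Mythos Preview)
Your computation matches the paper's almost exactly: both verify the convolution identity by rewriting the integrand as a function pulled back along $q$ and then invoking $q_*\mu^u=\nu^{q(u)}$, and both dismiss the adjoint as a triviality. You actually supply more detail than the paper --- it leaves the $I$-norm estimate entirely to the reader and says nothing about the maximal-norm isometry beyond the word ``hence'' appearing in the statement.

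There is, however, one real gap in your final paragraph: the assertion that ``conversely every representation of $C_c(H)$ arises this way'' is not justified, and it does not follow from the $I$-norm isometry alone. To obtain $\|q^*f\|_{\max}\ge\|f\|_{\max}$ you would need, for each $I$-bounded $*$-representation $\rho$ of $C_c(H)$, an $I$-bounded $*$-representation $\pi$ of $C_c(G)$ with $\pi\circ q^*=\rho$; this is an extension problem from the $*$-subalgebra $q^*(C_c(H))\subset C_c(G)$ to all of $C_c(G)$, and isometric inclusions of Banach $*$-algebras do not in general induce isometric inclusions of their enveloping $C^*$-algebras. One honest route is Renault's disintegration theorem: realize $\rho$ as the integrated form of a unitary representation of the groupoid $H$, then pull that representation back along the groupoid morphism $q$ to get a representation of $G$ whose integrated form does the job. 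The paper does not fill this step in either, so you are not behind the source --- but the argument as written is incomplete at exactly this point.
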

\begin{proof}

The usual pullback of $\phi$ induces a $\CC$-module  homomorphism $\phi^*: C_c(H)\to C_c(G)$. We claim that it is a $*$-algebra morphism.  Let $f_1, f_2 \in C_c(H)$ and $y \in G$. We define $F_1, F_2 \in C_c(H)$ by $F_1(h) = f_1(\phi(y)h)$ for $h\in \phi(s(y))$  (and 0 elsewhere) and $F_2(h) = f_2(h^{-1})$ for $h \in H$.   We have

\begin{align*}
(\phi^*(f_1) \ast \phi^*(f_2))(y) &= \int_G f_1(\phi(yx))f_2(\phi(x^{-1}))\,d\mu^{s(y)}(x) \\
&= \int_G (F_1 \cdot F_2)(\phi(x))\,d\mu^{s(y)}(x) \\
&= \int_H (F_1 \cdot F_2)(z)\,d\nu^{\phi(s(y))}(z) \\
&= \int_H f_1(\phi(y)z)f_2(z^{-1})\,d\nu^{\phi(s(y))}(z) \\
&= (f_1 * f_2)(\phi(y)) = (\phi^*(f_1 \ast f_2))(y).
\end{align*}

An even easier computation shows that $\phi^*$ preserves the adjoint and hence is a *-morphism. We will leave the simple proof that $\phi$ is $I$-norm decreasing to the reader. 
\end{proof}

\begin{Proposition}\label{reduced}
Let $\phi:G\to H$ be a Haar system preserving morphism of locally compact Hausdorff groupoids with Haar systems $\{\mu^u:u\in \G0\}$ and $\{\nu^v:v\in H^{(0)}\}$, respectively. The pullback map $q^*:C_c(H)\to C_c(G)$ is a *-morphism of topological *-algebras that extends to reduced completions. If $\phi$ is surjective, then $\phi^*$ is isometric with respect to the reduced norms $||\cdot||_r$
\end{Proposition}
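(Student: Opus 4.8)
The plan is to reduce the statement about reduced completions to a statement about the induced representations $\mathrm{Ind}(\delta_u)$, exploiting the fact (already established in Proposition \ref{inducedmorphism}) that $\phi^*:C_c(H)\to C_c(G)$ is an $I$-norm decreasing $*$-morphism. The key observation is that $\phi$, being a Haar system preserving functor, pulls back the measure-theoretic data underlying the $L^2$-spaces: for $u\in\G0$ with $\phi(u)=v$, the map $\phi$ restricts to a measure-preserving map from $(G^u,\mu^u)$ onto $(H^v,\nu^v)$ (surjectivity onto $H^v$ being part of the definition of Haar system preserving). Hence there is an isometric inclusion $V_u:L^2(H,\theta_v)\hookrightarrow L^2(G,\theta_u)$ given by $\xi\mapsto \xi\circ\phi$ — or, more precisely, one first checks that $\phi_*\theta_u=\theta_v$ using condition (1) and covariance, and then $V_u$ is the corresponding isometry of Hilbert spaces.

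First I would verify that $\phi_*\theta_u = \theta_v$ from the defining formula $\int_G f\,d\theta_u = \int_G f\,d\mu^u$ (which is the $z=u$ case of the displayed definition) together with $\phi_*\mu^u=\nu^v$. Second, I would establish the intertwining identity
\begin{equation*}
\mathrm{Ind}(\delta_u)(\phi^*f)\circ V_u \;=\; V_u\circ \mathrm{Ind}(\delta_v)(f)
\end{equation*}
for all $f\in C_c(H)$; this is a direct computation substituting the definition of $\mathrm{Ind}$ and changing variables along $\phi$, using $\phi(xy^{-1})=\phi(x)\phi(y)^{-1}$ (covariance) and the measure-preservation on fibers. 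From the intertwining relation and the fact that $V_u$ is an isometry it follows immediately that $\|\mathrm{Ind}(\delta_u)(\phi^*f)\|\ge \|\mathrm{Ind}(\delta_v)(f)\|$, and hence, taking suprema over $u\in\G0$ on the left (noting every such $u$ maps to some $v\in H^{(0)}$) and over $v\in H^{(0)}$ on the right, one gets $\|\phi^*f\|_r\ge\|f\|_r$. Combined with the reverse inequality $\|\phi^*f\|_r\le\|f\|_{\max}\le\|f\|_{\max,H}$... — more carefully, I would instead get $\|\phi^*f\|_r\le \|\phi^*f\|_I\le\|f\|_I$ does not directly bound $\|f\|_r$, so to show $\phi^*$ merely extends to the reduced completions (the non-surjective case) I would argue that each $\mathrm{Ind}(\delta_u)\circ\phi^*$ is, by the intertwining identity, unitarily equivalent to a subrepresentation of (a multiple of) $\mathrm{Ind}(\delta_v)$, whence $\|\mathrm{Ind}(\delta_u)(\phi^*f)\|\le\|\mathrm{Ind}(\delta_v)(f)\|\le\|f\|_r$, giving $\|\phi^*f\|_r\le\|f\|_r$; so $\phi^*$ is automatically $r$-norm decreasing and extends to reduced completions.

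For the surjective case, $V_u$ is genuinely unitary (not just isometric): surjectivity of $\phi$ guarantees that for every $v\in H^{(0)}$ there is a $u$ with $\phi(u)=v$, and the fiberwise maps $(G^u,\mu^u)\to(H^v,\nu^v)$ are measure-preserving and onto, so $V_u$ is onto. Then the intertwining identity makes $\mathrm{Ind}(\delta_u)\circ\phi^*$ unitarily equivalent to $\mathrm{Ind}(\delta_v)$, giving the equality $\|\mathrm{Ind}(\delta_u)(\phi^*f)\|=\|\mathrm{Ind}(\delta_v)(f)\|$; taking suprema yields $\|\phi^*f\|_r=\|f\|_r$ exactly as in the maximal case. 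The main obstacle I anticipate is the careful bookkeeping in the measure-theoretic change of variables — verifying that $\phi$ restricted to a range fiber really is measure-preserving onto the target fiber, that $\xi\mapsto\xi\circ\phi$ is well-defined on $L^2$ classes (and has dense range hitting $C_c(H)$), and that the non-Hausdorff or non-$\sigma$-compact subtleties flagged in Remark \ref{Radon} do not interfere; the algebraic intertwining computation itself is routine once these measure-theoretic points are pinned down.
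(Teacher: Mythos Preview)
Your overall strategy---pushing forward $\theta_u$ to $\theta_{\phi(u)}$ and intertwining the induced representations via the isometry $V_u:\xi\mapsto\xi\circ\phi$---is exactly the paper's approach (the paper compresses all of this into the single assertion that ``$\mathrm{Ind}(\delta_{\phi(u)})$ is the composition $\mathrm{Ind}(\delta_u)\circ\phi^*$''). However, two of your deductions do not go through as stated.

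\textbf{First gap.} From the intertwining relation $\mathrm{Ind}(\delta_u)(\phi^*f)\,V_u=V_u\,\mathrm{Ind}(\delta_v)(f)$ with $V_u$ isometric you correctly extract $\|\mathrm{Ind}(\delta_v)(f)\|\le\|\mathrm{Ind}(\delta_u)(\phi^*f)\|$, but you then reverse direction and claim the intertwining identity shows $\mathrm{Ind}(\delta_u)\circ\phi^*$ is a subrepresentation of a multiple of $\mathrm{Ind}(\delta_v)$. The intertwining identity by itself only exhibits $\mathrm{Ind}(\delta_v)$ as a subrepresentation of $\mathrm{Ind}(\delta_u)\circ\phi^*$, not the converse. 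What is missing is the observation that $\mathrm{Ind}(\delta_u)(\phi^*f)$ \emph{vanishes} on $(\mathrm{range}\,V_u)^\perp$: if $\eta\perp\mathrm{range}\,V_u$, then for each fixed $x\in G^u$ the function $y\mapsto f(\phi(x)\phi(y)^{-1})$ lies in the range of $V_u$ (it factors through $\phi$), so the defining integral $\int f(\phi(xy^{-1}))\eta(y)\,d\mu^u(y)$ is an inner product of $\eta$ against a vector in $\mathrm{range}\,V_u$ and hence zero. With this in hand one gets $\mathrm{Ind}(\delta_u)\circ\phi^*\cong\mathrm{Ind}(\delta_v)\oplus 0$, whence the norm \emph{equality} $\|\mathrm{Ind}(\delta_u)(\phi^*f)\|=\|\mathrm{Ind}(\delta_{\phi(u)})(f)\|$ for every $u$, and the bound $\|\phi^*f\|_r\le\|f\|_r$ follows immediately.

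\textbf{Second gap.} Your argument for the isometric case asserts that surjectivity of $\phi$ makes $V_u$ unitary because ``the fiberwise maps $(G^u,\mu^u)\to(H^v,\nu^v)$ are measure-preserving and onto, so $V_u$ is onto.'' This is false: a measure-preserving surjection of measure spaces induces an \emph{isometry} $\xi\mapsto\xi\circ\phi$, but its range consists only of the $\phi$-measurable functions. For a concrete counterexample take $G=H=\TT$ with Haar measure and $\phi(z)=z^2$; this is a surjective Haar system preserving group homomorphism, yet the range of $V$ is only the even functions in $L^2(\TT)$. Fortunately, once the first gap is fixed you do not need $V_u$ to be unitary at all: the norm equality $\|\mathrm{Ind}(\delta_u)(\phi^*f)\|=\|\mathrm{Ind}(\delta_{\phi(u)})(f)\|$ already holds for every $u$, and surjectivity of $\phi$ is used only to ensure that $\{\phi(u):u\in\G0\}=H^{(0)}$, so that the supremum over $u$ on the left recovers the full supremum over $v$ on the right.
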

\begin{proof}
Let $u\in \G0$ and notice that for all $g\in C_c(H)$, we have 
$$\int_{\G0}\int_{G} \phi^*(g)(z) d\mu^u(z) d\delta_x(u) = \int_{H^{(0)}}\int_H gd\nu^{v}(y)d\delta_{\phi(x)}(v)$$
because $\nu^{\phi(w)}$ is the pushforward measure of $\mu^w$ for every $w\in \G0$ and because $\delta_{\phi(x)}$ is the pushforward measure of $\delta_x$. It therefore follows that the representation $\text{Ind}(\delta_{\phi(u)})$ is the composition $\text{Ind}(\delta_u)\circ \phi^*$. It follows that $\phi^*$ extends to reduced completions. 

The fact that $\phi^*$ is isometric if $\phi$ is surjective follows from the fact that for every $u\in H^{(0)}$  the measure $\theta_u$ is a pushforward of $\theta_v$ for some $v\in \G0$.  
\end{proof}

The following proposition, proven in \cite{AG}, shows that there are indeed lots of morphisms of groupoids that preserve Haar systems.

\begin{Proposition}\label{Exampleshaarmeasurepreserving}
Let $G$ be a topological groupoid with Haar system and let $f:X\to Y$ be a proper continuous function of locally compact Hausdorff spaces. Then the morphism $\text{id}_G\times f: G\times X\to G\times Y$ is Haar system preserving (here, we take the Haar system to be $\delta_x\times \mu^y$ for $(x,y)\in \G0\times X$). 
\end{Proposition}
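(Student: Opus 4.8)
The plan is to verify the three conditions of Definition~\ref{haarpreserving} for the functor $\Phi := \mathrm{id}_G \times f \colon G \times X \to G \times Y$, where $X$ and $Y$ are regarded as groupoids with only identity arrows. First, though, one should record that $G \times X$ is a locally compact Hausdorff groupoid (object space $\G0 \times X$, structure maps acting coordinatewise) and that assigning to the object $(u,x) \in \G0 \times X$ the measure $\mu^u \times \delta_x$ gives a Haar system for it, and similarly $\nu^{(v,y)} := \mu^v \times \delta_y$ for $G \times Y$. This is routine: condition~(1) holds because $\mu^u \times \delta_x$ is supported on $G^u \times \{x\} = (G \times X)^{(u,x)}$; condition~(3) follows by fixing $x \in X$ and applying condition~(3) of the Haar system on $G$ to the slice $F(\cdot,x) \in C_c(G)$; and the continuity in condition~(2), i.e. joint continuity of $(u,x) \mapsto \int_G F(g,x)\,d\mu^u(g)$ for $F \in C_c(G \times X)$, is obtained by writing $F(g,x) = F(g,x_0) + \bigl(F(g,x) - F(g,x_0)\bigr)$, handling the first summand with condition~(2) for $G$ and the second with uniform continuity of $F$ on its compact support together with local boundedness of $u \mapsto \mu^u(K)$ for $K \subseteq G$ compact.

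Next I would check that $\Phi$ is a groupoid morphism. It is a covariant functor since it acts as the identity on the $G$-coordinate and as $f$ on the trivial-groupoid $X$-coordinate, hence commutes with all source, range, multiplication and inversion maps; it is continuous because $f$ is. The only substantive point is properness: if $K \subseteq G \times Y$ is compact then $K \subseteq \pi_G(K) \times \pi_Y(K)$ with both factors compact, so $\Phi^{-1}(K)$ is a closed subset (preimage of the closed set $K$ under the continuous $\Phi$) of $\Phi^{-1}\bigl(\pi_G(K) \times \pi_Y(K)\bigr) = \pi_G(K) \times f^{-1}(\pi_Y(K))$, and the latter is compact because $f$ is proper; hence $\Phi^{-1}(K)$ is compact.

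Finally, for the Haar system preserving condition, fix $(v,y') \in \G0 \times Y$ and $(u,x) \in \Phi^{-1}(v,y')$, which by definition of $\Phi$ means $u = v$ and $f(x) = y'$. Since $\Phi = \mathrm{id}_G \times f$ is a product of maps and $\mu^u \times \delta_x$ is a product measure, the pushforward splits as a product of pushforwards, $\Phi_*(\mu^u \times \delta_x) = (\mathrm{id}_G)_*\mu^u \times f_*\delta_x = \mu^u \times \delta_{f(x)}$, using $(\mathrm{id}_G)_*\mu^u = \mu^u$ and $f_*\delta_x = \delta_{f(x)}$. As $u = v$ and $f(x) = y'$, this equals $\mu^v \times \delta_{y'} = \nu^{(v,y')}$, which is precisely what Definition~\ref{haarpreserving} demands.

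I do not anticipate a genuine obstacle: the statement is a sequence of short verifications. The two places needing more than one line are the joint-continuity clause of the Haar-system axiom for the product groupoid and the properness of a product of proper maps between locally compact Hausdorff spaces, both handled by the standard arguments above. The identity carrying the real content, $\Phi_*(\mu^u \times \delta_x) = \mu^u \times \delta_{f(x)}$, is just the elementary fact that the pushforward of a product measure along a product map is the product of the pushforwards, which one checks on measurable rectangles or, equivalently, by a Fubini computation against functions in $C_c(G \times Y)$ via Riesz--Markov.
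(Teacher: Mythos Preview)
Your argument is correct. The paper itself does not prove this proposition but defers to \cite{AG}; your direct verification of Definition~\ref{haarpreserving} is the expected routine computation, and each step --- properness of $\mathrm{id}_G \times f$ via the closed-subset-of-a-compact-product argument, the Haar-system axioms for $\mu^u \times \delta_x$ on $G\times X$, and the pushforward identity $\Phi_*(\mu^u \times \delta_x) = \mu^u \times \delta_{f(x)}$ --- is sound.
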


The following are some examples of Proposition \ref{Exampleshaarmeasurepreserving}

\begin{itemize}
    \item Suppose that $G$ is a groupoid with Haar system and $X$ is a compact Hausdorff space. Then the coordinate projection $G\times X \to G$ is Haar system preserving and, furthermore, has pullback map given by $C^*(G) \to C^*(G)\otimes C(X)$ given by $a\to a\otimes 1$.
    \item Suppose that $f:X\to Y$ is a proper continuous map of locally compact Hausdorff spaces and let $G_n$ be as in Example \ref{matrixgroupoid} then the map $f\times \text{id}_{G_n}:X\times G_n\to Y\times G_n$ is Haar system preserving and, furthermore, the pullback map induces the morphism $M_n(C_0(Y)) \to M_n(C_0(X))$ where $(g_{i,j}) \to (f^*(g_{i,j}))$.
\end{itemize}

In most cases in this paper, it is straightforward to see that a particular map is Haar system preserving by simply checking the definition.  Even though the definition seems quite rigid, all of the morphisms we use in this paper can be easily checked to be Haar system preserving.

\begin{Definition}
Let $G$ and $H$ be locally compact Hausdorff groupoids with Haar systems of measures $\{\mu^u:u\in G^0\}$ and $\{\lambda^v:v\in H^0\}$. We define the \textbf{product of $G$ and $H$}, denoted by $G\times H$, to be the groupoid with object space $G^0\times H^0$ and with arrows $G^1 \times H^1$. We give $G\times H$ the Haar system  $\{\mu^u \times \lambda^v:(u,v)\in G^0\times H^0\}$. 
\end{Definition}

\begin{Lemma}\label{producttotensor}
Let $G$ and $H$ be locally compact Hausdorff  groupoids with Haar systems of measures $\{\mu^u,u\in \G0\}$ and $\{\lambda^v:v\in H^{
(0)}\}$, respectively. Then $C^*(G\times H)\cong C^*(G) \otimes_{\text{max}} C^*(H)$ and $C_r^*(G\times H)\cong C_r^*(G) \otimes_{\text{min}} C_r^*(H)$. 
\end{Lemma}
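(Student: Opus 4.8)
The plan is to prove the isomorphism $C^*(G\times H)\cong C^*(G)\otimes_{\max} C^*(H)$ by identifying both sides with completions of a common dense $*$-subalgebra and checking the universal properties match. First I would observe that the algebraic tensor product $C_c(G)\odot C_c(H)$ sits densely (in the inductive limit topology) inside $C_c(G\times H)$: an elementary tensor $f\otimes g$ is sent to the function $(x,y)\mapsto f(x)g(y)$, and by a Stone--Weierstrass argument on compact sets together with partitions of unity these finite sums are dense in $C_c(G\times H)$. A direct computation with the convolution formula and the product Haar system $\{\mu^u\times\lambda^v\}$ shows this map is a $*$-homomorphism: the convolution on $G\times H$ factors as the convolution on each coordinate by Fubini (legitimate since everything has compact support and the measures are Radon), and likewise for the adjoint. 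So $C_c(G\times H)$ is, as a topological $*$-algebra, the (inductive-limit completion of the) algebraic tensor product $C_c(G)\odot C_c(H)$.

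Next, for the maximal case, I would invoke the universal property: $C^*(G\times H)$ is the completion of $C_c(G\times H)$ for the norm $\|f\|_{\max}=\sup_\pi\|\pi(f)\|$ over $I$-norm-continuous $*$-representations. On the tensor product $C_c(G)\odot C_c(H)$ this supremum should coincide with the maximal $C^*$-tensor norm. One inclusion: any pair of commuting representations $\pi_G$ of $C_c(G)$ and $\pi_H$ of $C_c(H)$ on the same Hilbert space yields a representation of $C_c(G\times H)$ — here one uses that a representation of $C^*(G)$ restricts to an $I$-norm-continuous representation of $C_c(G)$ and conversely (the $I$-norm bound is what makes $\|\cdot\|_{\max}$ well-defined), so that $\text{Rep}(C_c(G))\leftrightarrow\text{Rep}(C^*(G))$; then $\|\xi\|_{\max\text{-tensor}}\le\|\xi\|_{C^*(G\times H)}$ for $\xi\in C_c(G)\odot C_c(H)$. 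The reverse inequality: a representation of $C_c(G\times H)$ need not a priori decompose, but one can restrict it to the subalgebras $C_c(G)\odot\CC$ and $\CC\odot C_c(H)$ (using approximate units supported on the diagonal $G^{(0)}\times H^{(0)}$ to make sense of the restrictions, or by passing to the multiplier algebra), obtaining commuting representations whose product dominates, giving $\|\xi\|_{C^*(G\times H)}\le\|\xi\|_{\max\text{-tensor}}$. Hence the two norms agree and the completions coincide, i.e.\ $C^*(G\times H)\cong C^*(G)\otimes_{\max} C^*(H)$.

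For the reduced case I would argue more concretely using the regular representations. By Lemma-type bookkeeping with the measures $\theta_z$, one has $L^2(G\times H,\,\theta_{(u,v)})\cong L^2(G,\theta_u)\otimes L^2(H,\theta_v)$ as Hilbert spaces, and under this identification the induced representation $\text{Ind}(\delta_{(u,v)})$ of $C_c(G\times H)$ factors as $\text{Ind}(\delta_u)\otimes\text{Ind}(\delta_v)$ on elementary tensors (again a Fubini computation with the convolution kernel $f((x,y)(x',y')^{-1})$). Taking the supremum of operator norms over all $(u,v)\in G^{(0)}\times H^{(0)}$ then gives, by the standard description of the minimal (spatial) tensor norm as $\|\cdot\|_{\min}=\sup\{\|(\rho_G\otimes\rho_H)(\cdot)\|\}$ over faithful families of representations, that $\|f\|_r$ on $C_c(G\times H)$ equals the spatial tensor norm of $f$ viewed in $C_r^*(G)\odot C_r^*(H)$; completing yields $C_r^*(G\times H)\cong C_r^*(G)\otimes_{\min} C_r^*(H)$.

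I expect the main obstacle to be the reverse inequality in the maximal case: showing that an arbitrary $I$-norm-continuous representation of $C_c(G\times H)$ does not exceed the maximal tensor norm. The subtlety is that $C_c(G\times H)$ is typically non-unital, so one cannot directly split a representation into commuting pieces; the fix is to work with an approximate identity $(e_i)$ for $C_c(G\times H)$ drawn from $C_c(G^{(0)})\odot C_c(H^{(0)})$ (or products of such), use it to define bounded extensions of $\pi|_{C_c(G)}$ and $\pi|_{C_c(H)}$ to the relevant multiplier algebras, verify these commute, and then appeal to the universal property of $\otimes_{\max}$. The analytic estimates needed to justify the Fubini interchanges and the $I$-norm continuity of the restricted representations are routine but need to be stated carefully; everything else is bookkeeping with the product Haar system.
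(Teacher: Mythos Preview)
Your approach is essentially the same as the paper's: you identify $C_c(G)\odot C_c(H)$ with a dense $*$-subalgebra of $C_c(G\times H)$ via $(f\otimes g)(x,y)=f(x)g(y)$, verify this respects convolution and adjoint by Fubini, and then match the two completions norm by norm---for the maximal case by the correspondence between representations of the product and commuting pairs of representations of the factors, and for the reduced case by identifying $\text{Ind}(\delta_{(u,v)})$ with $\text{Ind}(\delta_u)\otimes\text{Ind}(\delta_v)$ on $L^2(G,\theta_u)\otimes L^2(H,\theta_v)$. The paper's proof is in fact terser than yours: it simply asserts the one-to-one correspondence in the maximal case without addressing the non-unitality issue you flag, so your discussion of approximate units and multipliers to justify the reverse inequality is a genuine elaboration rather than a deviation.
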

\begin{proof}
Consider the usual topological vector space isomorphism $\phi:C_c(G)\otimes C_c(H) \to C_c(G\times H)$ (with inductive limit topologies) given by $\phi(f\otimes g)((x,y)) = f(x)g(y)$ for simple tensors and extended linearly over sums of simple tensors. 
To see that $\phi$ is convolution product preserving, let $f_i\in C_c(G)$ and $g_i\in C_c(H)$ for $i=1,2$ and consider 

\begin{align*}
\phi(f_1\otimes g_1 * f_2\otimes g_2)((x,y)) & =  \phi(f_1*f_2 \otimes g_1 * g_2)((x,y)) \\
& = \left(\int_G f_1(z^{-1})f_2(xz)d\mu^{s(x)}(z)\right)\left(\int_H g_1(w^{-1})g_2(yw)d\lambda^{s(y)}(w)\right)\\
& = \int_G \int_H f_1(z^{-1})f_2(xz)g_1(w^{-1})g_2(yw)d\mu^{s(y)}(w) d\lambda^{s(x)}(z) \\
& = \int_{G\times H} f_1(z^{-1})f_2(xz)g_1(w^{-1})g_2(yw)d\mu^{s(y)}(w) d\left(\mu^{s(x)}\times \lambda^{s(y)}\right)(w,z) \\
& = \phi(f_1\otimes g_1) * \phi(f_2\otimes g_2) (x,y)
\end{align*}

Notice that the above calculation extends to sums of simple tensors. A straightforward calculation shows that $\phi$  preserves adjoints. 

To see that the maximal completion of $C_c(G\times H)$ is isomorphic to $C^*(G)\otimes_{\text{max}}C^*(H)$, one just needs to notice that $I$-norm continuous morphisms of $C_c(G\times H)$ are in one-to-one correspondence with commuting  $I$-norm continuous representations of $C_c(G)$ and $C_c(H)$.

As for the reduced case, notice that the representations $\text{Ind}(\delta_{(x,y)}) = \text{Ind}(\delta_x\times\delta_y)$ correspond exactly to the representation $\text{Ind}(\delta_x)\otimes \text{Ind}(\delta_y)$ on the Hilbert space $L^2(G,\theta_x)\otimes L^2(H,\theta_y)$. This corresponds exactly to the completion $C_r^*(G) \otimes_{\text{min}} C_r^*(H)$.
\end{proof}

\section{Partial Morphisms, Induced Maps, and Inverse Systems}\label{maintool}

\begin{Definition}\label{opensubgroupoid}
Let $G$ be a topological groupoid with Haar system of measures and suppose that $K\subset G$ is a open subgroupoid. Then $K$ carries a Haar system of measures so that the extension of $f\in C_c(K)$ to $G$ by setting $f(g) = 0$ for all $g\in G\setminus K$ is a $*$-morphism of convolution algebras.
An open subgroupoid $K\subset G$ with this Haar system of measures will be referred to as a \textbf{Haar subgroupoid}.
\end{Definition}

When working with inductive systems of $C^*$-algebras, one often requires that the bonding maps be unital or that a particular bonding  map lands in a particular subalgebra. As mentioned in the introduction, one cannot model the morphism $M_k\to M_k\otimes M_n$ given by $T\to T\otimes \text{id}_n$ with pullback maps (in fact, the reader can easily verify this fact on their own). The following concept is designed precisely to make up for the failings of pullback maps, by allowing more flexibility with their domains.

\begin{Definition}\label{partialmorphisms}
Let $G$ and $H$ be topological groupoid with Haar systems of measures. A 
\textbf{partial morphism} from $G$ to $H$ is a pair $(f,K)$ where $K$ is a Haar subgroupoid of $G$ ($K$ may be all of $G$) and $f:K\to H$ is a Haar system preserving morphism of groupoids. 
\end{Definition}

\begin{Notation}
Sometimes we will write $f:G\to H$ as a partial morphism and suppress the $K$, especially when $K$ is not relevant to the point we are making. 
\end{Notation}

\begin{Remark}
A Haar system preserving morphism of groupoids is a partial morphism and corresponds to the case where $G=K$ in the above definition. Also note that $f=\text{id}_K$ corresponds to the inclusion of $C_c(K)\hookrightarrow C_c(G)$, but we have artificially made this inclusion contravariant.
\end{Remark}

\begin{Remark}
Partial morphisms are motivated by the concepts of subhomomorphisms or local homomorphisms of skew fields. In fact, our idea of taking inverse limits with partial morphism bonding maps is closely related to the work of A. I. Lichtman in \cite{L}. 
\end{Remark}

\begin{Definition}\label{generalizedpullback}
Let $G$ and $H$ be groupoids with Haar systems of measures and let $(f,K):G\to H$ be a partial morphism. We define the \textbf{induced map} to be the composition 
$$ C_c(H) \overset{f^*}\to  C_c(K) \xrightarrow{\text{canonical}} C_c(G)$$
where the canonical inclusion $C_c(K) \to C_c(G)$ was outlined in Definition \ref{opensubgroupoid}.
\end{Definition}

\begin{Remark}
Note that if $\phi:G\to H$ is a  partial morphism then by Propositions \ref{inducedmorphism} and \ref{reduced}, the induced map $\phi^*$ is a $*$-morphism of minimal (maximal) groupoid $C^*$-algebras. If, additionally, the partial morphism is surjective, then the induced $*$-morphism is isometric (for both the minimal and maximal completions). 
\end{Remark}

The following proposition is one of the key reasons why we are looking at partial morphisms and their induced maps in this paper. We omit the straightforward proof.

\begin{Proposition}\label{reasonforgenpullbacks}
Let $G$ and $H$ be \'etale groupoids with $\G0$ compact and let $\phi:G\times H \to H$ be the partial map defined by projection of the open subgroupoid $\G0\times H \to H$. The induced morphism, composed with the isomorphism $C^*(G\times H) \to C^*(G)\otimes C^*(H)$ (which is the inverse of the isomorphism defined in the proof of Proposition $\ref{producttotensor}$ ) is the morphism $C^*(H) \hookrightarrow C^*(G)\otimes C^*(H)$ given by $a \to 1\otimes a$. The result is also true if one takes minimal completions.
\end{Proposition}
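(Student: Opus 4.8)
The plan is to compute explicitly what the induced map of the partial morphism does on a dense subalgebra and recognize it as the claimed inclusion. First I would unwind the definitions: the partial morphism $\phi:G\times H\to H$ is, by Definition \ref{partialmorphisms}, the pair $(\phi_0, \G0\times H)$ where $\G0\times H$ is an open Haar subgroupoid of $G\times H$ (open because $\G0$ is open in $G$, as $G$ is \'etale and hence $r$-discrete) and $\phi_0:\G0\times H\to H$ is the coordinate projection. One checks that $\phi_0$ is Haar system preserving: with $G$ \'etale the Haar system on $G\times H$ restricted to $\G0\times H$ is the counting measure on $\G0$ times the Haar system $\{\nu^v\}$ on $H$, i.e. the measure at $(u,v)$ is $\delta_u\times\nu^v$; pushing this forward along projection to $H$ gives exactly $\nu^v = \nu^{\phi_0(u,v)}$. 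So $\phi$ is a genuine partial morphism and Propositions \ref{inducedmorphism} and \ref{reduced} apply.

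Next I would trace the induced map $C_c(H)\xrightarrow{\phi_0^*} C_c(\G0\times H)\xrightarrow{\text{canonical}} C_c(G\times H)$ from Definition \ref{generalizedpullback}. For $g\in C_c(H)$, the pullback $\phi_0^*(g)$ is the function on $\G0\times H$ given by $(u,y)\mapsto g(y)$, and the canonical inclusion extends it by zero to all of $G\times H$, giving the function $\psi_g$ on $G\times H$ with $\psi_g(x,y) = g(y)$ if $x\in\G0$ and $\psi_g(x,y)=0$ otherwise. Now I would invoke the isomorphism $\Phi\colon C_c(G)\otimes C_c(H)\to C_c(G\times H)$, $\Phi(a\otimes b)(x,y)=a(x)b(y)$, from the proof of Proposition \ref{producttotensor}. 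Writing $\mathbf{1}_{\G0}$ for the indicator function of $\G0$ (which lies in $C_c(G)$ when $\G0$ is compact and open, and which is the unit of $C^*(G)$ since $G$ is \'etale), we see immediately that $\psi_g = \Phi(\mathbf{1}_{\G0}\otimes g)$. Hence $\Phi^{-1}\circ(\text{induced map})$ sends $g\mapsto \mathbf{1}_{\G0}\otimes g = 1\otimes g$ on the dense subalgebra $C_c(H)$, which is exactly the asserted map $a\mapsto 1\otimes a$. Since this map is a $*$-morphism agreeing with $a\mapsto 1\otimes a$ on a dense subalgebra, and both are continuous for the maximal norms, they coincide on all of $C^*(H)$; the minimal case is identical using Proposition \ref{reduced} and the minimal tensor product identification from Proposition \ref{producttotensor}.

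The only mild subtlety — and the one place to be careful rather than a genuine obstacle — is checking that $\mathbf{1}_{\G0}$ really is a unit for $C^*(G)$ and that $\Phi$ intertwines the convolution structures in the way claimed, so that the identification $\psi_g=\Phi(\mathbf{1}_{\G0}\otimes g)$ is compatible with the algebra isomorphism $C^*(G\times H)\cong C^*(G)\otimes C^*(H)$ rather than just the vector space isomorphism; both of these follow from \'etaleness (counting-measure Haar system) and from Proposition \ref{producttotensor}, but should be stated. Everything else is a direct definition-chase, which is why the paper is content to omit it.
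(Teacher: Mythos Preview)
Your proposal is correct and is exactly the kind of direct definition-chase the paper has in mind; indeed the paper explicitly omits the proof as straightforward, so there is no alternative argument to compare against. The only point I would add is that your parenthetical about $\mathbf{1}_{\G0}\in C_c(G)$ relies on $\G0$ being not just open (from \'etaleness) but also closed in $G$, which holds because $G$ is Hausdorff (so $\G0=\{x:x=r(x)\}$ is closed); otherwise your argument is complete as written.
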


Let $\mathcal{G}$ denote the category whose objects are locally compact Hausdorff groupoids with Haar systems of measures and with partial morphisms. Notice that if $\phi:G\to H$ and $\psi:H\to K$ are partial morphisms then the domain of $\psi$ is an open  subset of $H$ and hence its preimage is open in $G$. Moreover, because $\phi$ is Haar system preserving, it follows that the pre-image of the domain of $\psi$ via $\phi$, which we will denote by $A$, is a Haar subgroupoid of the domain of $\phi$ and hence is a Haar subgroupoid of $G$. We define the composition of $\phi$ and $\psi$ to be $\psi \circ \phi|_{A}$. $\mathcal{G}$ has a full subcategory $\mathcal{T}$ consisting of locally compact and Hausdorff spaces and with partial morphisms between spaces. 

It is straightforward to check that the assignment of a groupoid $G$ to its convolution algebra $C_c(G)$ and to a partial morphism $\phi:G\to H$ the induced map $\phi^*:C_c(H)\to C_c(G)$ is a functor from $\mathcal{G}$ to the category of topological $*$-algebras with continuous and $*$-preserving algebra morphisms. Extending to the maximal or minimal completions, we get the following key result.

\begin{Theorem}\label{functor}
Let $\mathcal{C}$ denote the category of $C^*$-algebras with $*$-morphisms. The association $\Gamma_{\text{max}}$ ($\Gamma_{\text{min}}$) which takes each object of $\mathcal{G}$ and assigns its maximal (minimal) groupoid $C^*$-algebra and assigns partial morphisms to their induced map is a contravariant functor. Moreover, both $\Gamma_{\text{max}}$  and $\Gamma_{\text{min}}$  are extensions of the Gelfand duality functor $\Gamma_c$.
\end{Theorem}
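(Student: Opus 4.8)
The plan is to verify the two functoriality claims—first at the level of convolution algebras, then after completion—and finally to identify the restriction of $\Gamma_{\text{max}}$ and $\Gamma_{\text{min}}$ to the subcategory $\mathcal{T}$ with the Gelfand functor $\Gamma_c$. I would begin by recalling that the excerpt has already done the hard analytic work: Proposition \ref{inducedmorphism} and Proposition \ref{reduced} show that a single partial morphism $\phi=(f,K):G\to H$ induces a well-defined $*$-morphism $\phi^*:C^*(H)\to C^*(G)$ (resp. $C^*_r(H)\to C^*_r(G)$), because $f^*:C_c(H)\to C_c(K)$ is an $I$-norm-decreasing $*$-morphism and the canonical extension $C_c(K)\hookrightarrow C_c(G)$ of Definition \ref{opensubgroupoid} is an isometric $*$-embedding for both completions. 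So the content of the theorem that remains is: (i) $\mathrm{id}_G$ goes to $\mathrm{id}$, and (ii) composition is respected, i.e. $(\psi\circ\phi)^* = \phi^*\circ\psi^*$ for composable partial morphisms $\phi:G\to H$, $\psi:H\to L$.

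Step (i) is immediate: the identity partial morphism is $(\mathrm{id}_G,G)$, and its induced map is the identity on $C_c(G)$, hence on the completion. For step (ii), the key point is to unwind the definition of composition of partial morphisms given just before the theorem statement: if $\phi=(f,K)$ with $f:K\to H$ and $\psi=(g,M)$ with $g:M\to L$ (so $M\subset H$ open), then $A:=f^{-1}(M)$ is a Haar subgroupoid of $K$ and hence of $G$, and $\psi\circ\phi = (g\circ f|_A,\, A)$. I would then chase the diagram
\[
C_c(L)\xrightarrow{g^*} C_c(M)\hookrightarrow C_c(H)\xrightarrow{f^*} C_c(K)\hookrightarrow C_c(G),
\]
observing that $f^*$ restricted along the inclusion $C_c(M)\hookrightarrow C_c(H)$ agrees with $(f|_A)^*$ followed by $C_c(A)\hookrightarrow C_c(K)$—this is just the pointwise statement that for $h\in C_c(M)$ and $y\in G$, $(f^*h)(y)=h(f(y))$ is nonzero only when $f(y)\in M$, i.e. $y\in A$—so the two composites $C_c(L)\to C_c(G)$ coincide. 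Since all maps in sight are continuous for the relevant $C^*$-norms and $C_c$ is dense, equality extends to the completions, giving $(\psi\circ\phi)^*=\phi^*\circ\psi^*$ on $C^*$ (resp. $C^*_r$). Functoriality of $C_c(-)$ into topological $*$-algebras—which the excerpt asserts is straightforward—is exactly this same computation without the completion step.

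For the last sentence, I would restrict to $\mathcal{T}\subset\mathcal{G}$: an object is a locally compact Hausdorff space $X$, viewed as the groupoid with $X^{(0)}=X^{(1)}=X$ (only units), whose only Haar system is the family of point masses $\{\delta_x\}_{x\in X}$; then $C_c(X)$ with convolution is just $C_c(X)$ with pointwise multiplication, and both completions equal $C_0(X)$. A partial morphism $X\to Y$ is a pair $(f,U)$ with $U\subset X$ open and $f:U\to Y$ proper continuous; on a space the Haar-system-preserving condition is vacuous since $\delta_x$ pushes forward to $\delta_{f(x)}$ automatically. Its induced map $C_0(Y)\to C_0(X)$ sends $h$ to the extension-by-zero of $h\circ f$, which is precisely the morphism Gelfand duality (in the partially-defined form recalled in Remark \ref{GelfandRemark} and Appendix A) assigns to $(f,U)$. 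Hence $\Gamma_{\text{max}}|_{\mathcal{T}}=\Gamma_{\text{min}}|_{\mathcal{T}}=\Gamma_c$, so both functors extend $\Gamma_c$. I expect the only mild obstacle to be bookkeeping: making sure that the pre-image $A=f^{-1}(M)$ really is a \emph{Haar} subgroupoid (it is, because $f$ is a continuous functor so $A$ is an open subgroupoid, and openness gives the Haar structure by Definition \ref{opensubgroupoid}) and that "Haar system preserving" is inherited by the restriction $f|_A$ (it is, since the condition is fiberwise and $A$ is a union of whole source-fibers of $K$). None of this requires new ideas beyond carefully composing the constructions already in place.
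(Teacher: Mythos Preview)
Your proposal is correct and follows exactly the approach the paper intends: the paper itself does not write out a proof but simply asserts (just before the theorem) that functoriality of $G\mapsto C_c(G)$, $\phi\mapsto\phi^*$ is ``straightforward to check'' and then passes to completions via Propositions~\ref{inducedmorphism} and~\ref{reduced}; you have supplied precisely that check. One small correction: in your final parenthetical, $A=f^{-1}(M)$ is \emph{not} in general a union of whole source-fibers of $K$ (since $M^{f(u)}$ need not equal $H^{f(u)}$); the reason $f|_A$ remains Haar system preserving is rather that for $u\in A^{(0)}$ one has $A^u=(f|_{K^u})^{-1}(M^{f(u)})$, so $(f|_A)_*(\mu^u|_{A^u})=\nu^{f(u)}|_{M^{f(u)}}$ follows directly from $f_*\mu^u=\nu^{f(u)}$ by restricting to the measurable subset $M^{f(u)}$.
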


\begin{Remark}\label{dualityorno}
The reader may be thinking whether $\Gamma_{\text{min}}$ is a categorical equivalence like it is for commutative $C^*$-algebras ( see Subsection \ref{gelfand}). Unfortunately, we have $\Gamma_{\text{min}}(\TT)$ and $\Gamma_{\text{min}}(\ZZ)$ are both $C(\TT)$ where we view $\TT$ as a compact Hausdorff space and $\ZZ$ as a discrete group. The same example works for $\Gamma_{\text{max}}$. 
\end{Remark}

\begin{Definition}\label{geninvsystem}
We call a  diagram $\{G_\alpha, p^\alpha_\beta,\Delta\}$ in $\mathcal{G}$ an \textbf{inverse system of groupoids} if
\begin{enumerate}
    \item $\Delta$ is a directed set and for all $\alpha,\beta\in \Delta$ there exists $\gamma\in \Delta$ with $\gamma\ge \beta,\alpha$ and $p^\gamma_\alpha:G_\gamma\to G_\alpha$ and $p^\gamma_\beta:G_\gamma\to G_\beta$ are both in $\{G_\alpha, p^\alpha_\beta,\Delta\}$.
    \item $p^\alpha_\beta:G_\alpha \to G_\beta$ is a surjective partial morphism in $\mathcal{C}$.
    \item $p_\alpha^\alpha = \text{id}_{G_\alpha}$ for all $\alpha.$
    \item $p^\beta_\gamma \circ p^\alpha_\beta  = p^\alpha_\gamma$ for all $\alpha \ge \beta\ge \gamma.$
\end{enumerate}
\end{Definition}

We will refer to the maps $p^\alpha_\beta$ in Definition \ref{geninvsystem} as \textbf{bonding maps}. If $G$ is the inverse limit of an inverse system $\{G_\alpha, p^\alpha_\beta,\Delta\}$ then we refer to the partial maps $q_\alpha:G\to G_\alpha$ given by the universal property as \textbf{projections}. 

\begin{Example}
Let $\{X_\alpha,p^\alpha_\beta, \Delta\}$ be an inverse system of locally compact and Hausdorff spaces in which the partial maps $p^\alpha_\beta$ have domain $X_\alpha$ for each $\alpha\ge \beta$. It follows from the classical results that the inverse limit exists and is exactly the subspace of the product space $\Pi_{\alpha}X_\alpha$ consisting of those tuples $(x_\alpha)_{\alpha\in \Delta}$ for which $p^\alpha_\beta(x_\alpha) = x_\beta$. Such tuples are referred to as \textbf{threads}.
\end{Example}

The following is part of Theorem A in \cite{AG} and is a generalization of the previous example.

\begin{Example}\label{theorema}
Let $\{G_\alpha, \{\mu_\alpha^u:u\in G^0_\alpha\}),q^\alpha,\beta,A\}$ be an inverse system of groupoids with Haar systems  and with proper, continuous, surjective, Haar system preserving  bonding maps. Theorem A of \cite{AG} shows that if the domain of $p_\beta^\alpha$ is $G_\alpha$ for all $\alpha\ge \beta$ then the inverse limit groupoid $G= \varprojlim_{\alpha}G_\alpha$ exists and has a Haar system of measures $\{\mu^u:u\in G^0\}$ such that $(p_\alpha)_*(\mu^u) = \mu_\alpha^{p_\alpha(u)}$.
\end{Example}

\begin{Example}
Let $\{X_\alpha,i^\alpha_\beta,\Delta\}$ be a collection of locally compact Hausdorff spaces, $\Delta$ a directed set, and $i_\beta^\alpha:X_\beta \hookrightarrow X_\alpha$ an inclusion of an open subset. We form an inverse system $\{X_\alpha,p^\alpha_\beta,\Delta\}$ with partial morphisms embeddings $p_\alpha^\beta:X_\alpha\to X_\beta$ by letting the domain of $p_\beta^\alpha$ be $i_\beta^\alpha(X_\beta)$ and letting $p_\beta^\alpha: i_\beta^\alpha(X_\beta) \to X_\beta$ being the identity map. We leave to the reader to verify that the union $\bigcup_\alpha X_\alpha$ is the inverse limit of $\{X_\alpha,p^\alpha_\beta,\Delta\}$ where the partial projection maps are given by $p^\alpha_\beta$.
\end{Example}

The following example shows how inverse limits in $\mathcal{G}$ are actually unions of inverse limits.

\begin{Example}\label{Example3.14}
Let $X_0 = \{0,1\}$, $X_1 = X_0\times X_0 \times X_0$, and $p^1_0:X_1\to X_0$ be the partial morphism given by projection onto the first factor with domain $\{(x,y,0):x,y\in X_0\}$. In general, let $X_n = \Pi_{i=1}^{2n+1} X_0$ and let $p_n^{n+1}:X_{n+1}\to X_n$ have domain $U_n^{n+1}$ consisting of all points whose $2n+1$-coordinate is 0 and let $p_n^{n+1}$ be the standard projection to $X_n$. 

What does the inverse limit of such a sequence look like?  Fix $k\ge 0$ and define $V^n_k = (p^{n}_k)^{-1}(X_k) \subset U_k^n$. Notice that $\{V_k^{n}, p^{n+1}_{n}|_{V_k^n},n\ge k\}$ is an inverse sequence of compact metric spaces with surjective bonding maps, hence it has an inverse limit $Z_k$. Notice that $Z_k$ is a Cantor set for every $k\ge 0$ and, furthermore, for every $l\ge k$, $Z_k$ embeds into $Z_l$ as an open subset. We can explicitly describe this embedding actually. Recall that the Cantor set $C$ is $X_0^{\NN} \cong X_0^\NN\times X_0^\kappa$ for every countable cardinal $\kappa$. Notice that $Z_1$ can be viewed as $Z_0 \times \{0,1\}$ and that $Z_0$ embeds into $Z_1$ as those points whose second coordinate is 0. In general, $Z_k$ can be viewed as $Z_0 \times X_0^k$ and embeds into $Z_l = Z_0\times X_0^k \times X_0^{l-k}$ as those tuples whose $X_0^{l-k}$-coordinate is $(0,0,\hdots 0)$. 

We claim that $Z= \bigcup_{k}Z_k$ is the inverse limit of the sequence $\{X_n,p^{n+1}_n\}$. Observe that $Z_k$ is open in $Z$ for every $k\ge 0$ and that we have a canonical projection mapping $p_k$ from $Z_k$ to $X_k$ that commutes with the inverse system for every $k\ge 0$, namely, the inverse limit projection coming from the universal property for the inverse system $\{V_k^{n}: p^{n+1}_{n}\}$. Suppose that $\{Y,q_k\}$ was another space with partial morphisms $q_k$ mapping an open subset $U_k\subset Y$ to $X_k$ that commuted with with the partial mappings of the inverse system. Using the universal property of inverse limits for $\{V_k^n,p^{n+1}_n|_{V_k^n}\}$, it is easy to see that there are mappings $q'_k:U_k\to Z_k$ such that $p_k\circ q'_k = q_k$. Evidently, $Z$ is the inverse limit of $\{X_n,p^{n+1}_n\}$ with projection mappings $p_n$. 
\end{Example}

The following theorem subsumes all the previous examples and shows that inverse limits always exist.

\begin{Theorem}\label{mainthm.limitexists}
Every inverse system $\{G_\alpha,(p^\alpha_\beta,U^\alpha_\beta),
\Delta\}$ in $\mathcal{G}$ has an inverse limit $G$  in  $\mathcal{G}$. 
\end{Theorem}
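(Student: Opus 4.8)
The plan is to construct the inverse limit concretely as a union of classical inverse limits, exactly mirroring the structure seen in Examples \ref{theorema} and \ref{Example3.14}. The point is that a partial morphism $(p^\alpha_\beta, U^\alpha_\beta)$ only ``sees'' the Haar subgroupoid $U^\alpha_\beta \subset G_\alpha$, so one should first isolate, for each fixed index $\gamma$, the genuine (everywhere-defined, surjective, Haar system preserving) sub-inverse-system obtained by restricting everything above $\gamma$ to the preimages of $G_\gamma$. First I would define, for $\alpha \ge \gamma$, the open subgroupoid $V^\alpha_\gamma := (p^\alpha_\gamma)^{-1}(G_\gamma) \subset G_\alpha$ (where $p^\alpha_\gamma$ denotes the composite bonding map, with its domain an open Haar subgroupoid of $G_\alpha$), equipped with its restricted Haar system as in Definition \ref{opensubgroupoid}. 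Using the cocycle identity (4) of Definition \ref{geninvsystem} one checks that for $\alpha \ge \beta \ge \gamma$ the bonding map $p^\alpha_\beta$ restricts to a map $V^\alpha_\gamma \to V^\beta_\gamma$ which is now \emph{everywhere defined}, continuous, proper, surjective, and Haar system preserving. Thus $\{V^\alpha_\gamma, p^\alpha_\beta|_{V^\alpha_\gamma}, \alpha \ge \gamma\}$ is an inverse system of the type handled by Theorem A of \cite{AG} (Example \ref{theorema}), so its inverse limit $Z_\gamma$ exists as a locally compact Hausdorff groupoid with a Haar system $\{\mu^u : u \in Z_\gamma^{(0)}\}$ satisfying $(p_\alpha)_*\mu^u = \mu_\alpha^{p_\alpha(u)}$.

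Next I would organize the $Z_\gamma$ into a directed system of open inclusions and set $G := \varinjlim_\gamma Z_\gamma = \bigcup_\gamma Z_\gamma$. For $\delta \ge \gamma$, every thread in $Z_\delta$ projects (via $q_\gamma^\delta := p^\delta_\gamma$) into $V^\delta_\gamma$, and restricting such a thread to indices $\ge \gamma$ and pushing along the identifications realizes $Z_\gamma$ as an open subgroupoid of $Z_\delta$; this uses the universal property of $Z_\gamma$ applied to the family $\{p^\alpha_\gamma|_{V^\alpha_\delta} : \alpha \ge \delta\}$, precisely as in the explicit embedding described in Example \ref{Example3.14}. One must check these inclusions are compatible (a direct cocycle computation) and that they are open and Haar-compatible, so that the Haar systems on the $Z_\gamma$ glue to a Haar system on $G$ making each $Z_\gamma \hookrightarrow G$ a Haar subgroupoid. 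Then $G$ is a locally compact Hausdorff groupoid with Haar system, i.e. an object of $\mathcal{G}$, and the partial projections $q_\gamma : G \to G_\gamma$ are defined with domain $Z_\gamma$ by the inverse-limit projection of the system $\{V^\alpha_\gamma\}$; they are surjective Haar system preserving morphisms on their domains, hence partial morphisms, and they satisfy $p^\alpha_\beta \circ q_\alpha = q_\beta$ by construction.

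Finally I would verify the universal property. Given any $H \in \mathcal{G}$ with partial morphisms $r_\gamma : H \to G_\gamma$ (domains $W_\gamma \subset H$ open Haar subgroupoids) commuting with the bonding maps, one observes that $r_\gamma$ maps $W_\delta$ into $V^\delta_\gamma$ for $\delta \ge \gamma$ via $r_\gamma = p^\delta_\gamma \circ r_\delta$; applying the classical universal property of $Z_\gamma$ to the restricted everywhere-defined system yields a map $W_\gamma \to Z_\gamma$, and these assemble (again by a cocycle/compatibility check) into a single partial morphism $H \to G$ with domain $\bigcup_\gamma W_\gamma$, which is unique with the required property. One should also confirm it is Haar system preserving, which follows from the corresponding property of each $Z_\gamma$-factor and the pushforward identity from Example \ref{theorema}.

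I expect the main obstacle to be the bookkeeping around the \emph{partial} nature of the maps when gluing the pieces $Z_\gamma$: specifically, showing that the open inclusions $Z_\gamma \hookrightarrow Z_\delta$ are well defined, mutually compatible over the directed set, and respect the Haar systems, so that $\bigcup_\gamma Z_\gamma$ genuinely carries a Haar system and the projections $q_\gamma$ have the stated open Haar-subgroupoid domains. The groupoid-algebraic and measure-theoretic content is all borrowed from Theorem A of \cite{AG} (our Example \ref{theorema}); the new work is purely the combinatorial patching of domains, for which Examples \ref{theorema}--\ref{Example3.14} provide a complete template.
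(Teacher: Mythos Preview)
Your proposal is correct and follows essentially the same route as the paper: define $V^\alpha_\gamma=(p^\alpha_\gamma)^{-1}(G_\gamma)$, take the honest inverse limits $Z_\gamma$ via Theorem~A of \cite{AG} (Example~\ref{theorema}), realize $Z_\gamma\hookrightarrow Z_\delta$ as open inclusions, set $G=\bigcup_\gamma Z_\gamma$, and verify the universal property by restricting to the domains $W_\gamma$ and invoking the classical universal property of each $Z_\gamma$. The only point you leave implicit that the paper makes explicit is the gluing of the Haar systems on the increasing union $\bigcup_\gamma Z_\gamma$: the paper isolates this as a separate measure-theoretic lemma (Proposition~\ref{getthemmeasurestoworkout} in Appendix~B), showing that a directed union of Radon measure spaces compatible under open inclusions carries a unique Radon measure restricting to the given ones, which is exactly what your phrase ``the Haar systems on the $Z_\gamma$ glue'' needs.
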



\begin{proof}
We will first show that there is a locally compact space $G$ that is the inverse limit of the underlying spaces of the inverse system. Once this is done, it is fairly straightforward to put a groupoid structure  and a Haar system of measures on $G$.

Following our notation for Example \ref{Example3.14}, we let $V^\alpha_\beta = (p^{\alpha}_\beta)^{-1}(G_\beta)$ and notice that it is open in $U_\beta^\alpha$. It is straightforward to see that $\{V_\beta^{\alpha}, p^{\alpha}_{\beta}|_{V_\beta^\alpha}, \alpha \ge \beta\}$ is an inverse system of locally compact Hausdorff spaces with proper and surjective bonding maps, hence it has an inverse limit $Z_\beta$ which is itself a locally compact Hausdorff space (this uses the proper-ness of the bonding maps). Visually speaking, $Z_\alpha$ can be viewed as the space of threads which start at $G_\alpha$. Let $q^\beta_\alpha:Z_\beta\to G_\alpha$ denote the projections coming from the universal property of inverse limits for $\beta \leq \alpha$.

Notice that for $\beta\leq \alpha$ we have $Z_\beta$ embeds into $Z_\alpha$ as an open subset; indeed, $Z_\beta$ is exactly the inverse image of $U_\beta^\alpha$ under the inverse limit projection $q^\alpha_\beta:Z_\alpha\to G_\alpha$. Let $i^\alpha_\beta:Z_\beta \hookrightarrow Z_\alpha$ denote the open inclusion. It follows that $\{Z_\alpha,i_\beta^\alpha,\Delta\}$ is an inductive system of open inclusions.   Letting $G = \bigcup_{\alpha}Z_\alpha$ be the direct limit of this inductive system, we observe that  $G$ is locally compact and Hausdorff as it is an increasing union of open locally compact and Hausdorff spaces.

Observe that the projections $q_\beta^\alpha:Z_\alpha\to G_\beta$ are partial morphisms from $G$ to the system $\{G_\alpha,p^\alpha_\beta,\Delta\}$ that commute with the inverse system. Now, suppose  $Z$ is a locally compact Hausdorff space with partial maps $(q_\alpha,U_\alpha): Z \to G_\alpha$ such that $q^\alpha_\beta \circ q_\alpha = q_\beta$ for all $\alpha\ge \beta$. The reader should first notice that for each $\alpha \leq \beta$ we must have $U_\alpha\subset U_\beta$; indeed, we have $p^\alpha_\beta \circ q_\alpha = q_\beta$ and so $q_\beta^{-1}(G_\beta)= U_\alpha \subset q_\alpha^{-1}(G_\beta) = U_\beta$. It follows that, for all $\alpha\ge\beta$ there exists, via restriction of $q_\alpha$ to $U_\beta$,  maps $q_{\beta,\alpha}:U_\beta\to G_\alpha$ and, moreover, the image of $U_\beta$ under $q_{\beta,\alpha}$ lies in $V_\beta^\alpha$. It is easy to see that the maps $q_{\beta,\alpha}$ commute with the inverse system $\{V_\beta^{\alpha}, p^{\alpha}_{\beta}|_{V_\beta^\alpha}, \alpha \ge \beta\}$. Hence, by the universal property of the inverse limit $Z_\beta$, there exists a unique map $r_\beta:U_\beta \to Z_\beta$ such that $p_\beta \circ r_\beta =q_\beta$. We therefore have a partial map $(\bigcup_\alpha r_\alpha, \bigcup_\alpha U_\alpha):Z\to G$ that commutes with the projections $q_\alpha$ and $p_\alpha$ for all $\alpha$. It is unique as it is unique when restricted to $U_\alpha$ for each $\alpha$.

As $G$ is an increasing union of bona fide inverse limits of inverse systems of groupoids, each satisfying the hypothesis of Example \ref{theorema}, we know that $G$ is a union of open locally compact Hausdorff subgroupoids, each with a Haar system. By Proposition \ref{getthemmeasurestoworkout}  in Appendix B, there exists a collection of measures $\{\mu^x:x\in \G0\}$ of Radon measures with $\mu^x$ supported only on $G^x$ for every $x\in \G0$ and whose continuity and left invariance properties follow directly from the corresponding properties of the Haar systems of each of the open subgroupoids. More specifically, if $f \in C_c(G)$, then as $G$ is the increasing union of open subgroupoids, the support of $f$ lies in one of those open subgroupoids, where the continuity and left invariance is already known.
\end{proof}

\begin{Proposition}\label{equivrelation}
Let $(G_\alpha,p^\alpha_\beta,A)$ be a generalized inverse system of locally compact Hausdorff groupoids with fixed Haar systems $\{\mu_\alpha^u:u\in\G0_\alpha\}$. If $G_\alpha$ is an equivalence relation for each $\alpha$, then $G:=\varprojlim G_\alpha$ is an equivalence relation. Moreover, if $G_\alpha$ is \'etale for each $\alpha$ then $G$ is \'etale.
\end{Proposition}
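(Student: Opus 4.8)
The plan is to read the conclusion off the explicit construction of the inverse limit in the proof of Theorem~\ref{mainthm.limitexists}: there $G=\bigcup_{\alpha}Z_\alpha$, where for each $\alpha$ the space $Z_\alpha$ is the honest inverse limit (in the sense of Example~\ref{theorema}) of the subsystem of open Haar subgroupoids $V^\beta_\alpha=(p^\beta_\alpha)^{-1}(G_\alpha)\subseteq G_\beta$, $\beta\ge\alpha$, with the restricted (proper, surjective, Haar system preserving) bonding maps, and each $Z_\alpha$ is an open subgroupoid of $G$. Since, as recalled in Section~2, a locally compact principal groupoid is exactly an equivalence relation on its unit space (up to possibly refining the topology on $G^0\times G^0$), the first assertion reduces to showing $G$ is principal; and since the unit space and the source/range maps of a subgroupoid are just the restrictions of the ambient ones, it suffices to show each $Z_\alpha$ is principal and then to pass to the directed union.

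For the principality of $Z_\alpha$ I would combine two elementary facts. First, an open subgroupoid $K$ of a principal groupoid $H$ is principal, because $K(u)\subseteq H(u)=\{u\}$ for every $u\in K^0$; hence each $V^\beta_\alpha$ is principal. Second, an honest inverse limit of principal groupoids is principal: an element of $Z_\alpha$ is a thread $(x_\beta)_{\beta\ge\alpha}$ with $x_\beta\in V^\beta_\alpha$, and if its source and range both equal a unit thread $(u_\beta)_{\beta\ge\alpha}$ then $x_\beta\in V^\beta_\alpha(u_\beta)=\{u_\beta\}$ for every $\beta$, so the thread is $(u_\beta)$. Thus every $Z_\alpha$ is principal. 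Finally, if $u\in G^0$ and $x\in G(u)$, then $x$ and $u$ lie in a common $Z_\alpha$ (the $Z_\alpha$ increase and $\Delta$ is directed), with $u\in Z_\alpha^0$; as $Z_\alpha$ is a subgroupoid, $x\in Z_\alpha(u)=\{u\}$, so $x=u$. Hence $G$ is principal, which gives the first statement.

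For the étale statement, note that $G$ already carries a Haar system by Theorem~\ref{mainthm.limitexists}, so by the list of equivalent conditions for étaleness recalled in Section~2 it is enough to prove that $G$ is $r$-discrete, i.e.\ that $G^0$ is open in $G$. Since $G^0=\bigcup_\alpha Z_\alpha^0$ and each $Z_\alpha$ is open in $G$, it suffices to show $Z_\alpha^0$ is open in $Z_\alpha$. Let $q\colon Z_\alpha\to G_\alpha$ be the projection onto the $\alpha$-th coordinate, which is defined on all of $Z_\alpha$ because $V^\alpha_\alpha=G_\alpha$. I claim $Z_\alpha^0=q^{-1}(G_\alpha^0)$, and then openness of $Z_\alpha^0$ follows at once, $q$ being continuous and $G_\alpha^0$ open in $G_\alpha$. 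The inclusion $\subseteq$ is immediate since $q$ is a functor. For $\supseteq$, take a thread $x=(x_\beta)_{\beta\ge\alpha}$ with $q(x)=x_\alpha\in G_\alpha^0$; for each $\beta\ge\alpha$ the composite partial morphism $p^\beta_\alpha\colon V^\beta_\alpha\to G_\alpha$ is a Haar system preserving morphism of étale groupoids sending $x_\beta$ to the unit $x_\alpha$, so by the fact special to étale groupoids (Remark~\ref{specialcase}(3)) that such a morphism $\phi$ pulls units back to units — if $\phi(y)\in H^0$, write $w=r(y)$, so $y,w\in G^w$ with $\phi(y)=\phi(w)$, and injectivity of $\phi|_{G^w}$ forces $y=w\in G^0$ — we get $x_\beta\in G_\beta^0$ for every $\beta\ge\alpha$, i.e.\ $x\in Z_\alpha^0$. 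I expect this to be the main obstacle, and it is the one place where the étale hypothesis is genuinely used: for a general Haar system preserving morphism units need not pull back to units, so the unit space of an arbitrary inverse limit in $\mathcal{G}$ need not be open; one must also observe, by a routine check, that the $p^\beta_\alpha$ restricted to $V^\beta_\alpha$ really are Haar system preserving morphisms of étale groupoids, which holds because $V^\beta_\alpha$ is exactly the preimage of $V^\gamma_\alpha$ under $p^\beta_\gamma$ for $\beta\ge\gamma\ge\alpha$.
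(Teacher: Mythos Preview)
Your proof is correct and follows essentially the same route as the paper: for principality you use the increasing-union-of-honest-inverse-limits description of $G$ and argue via isotropy (threads of units), and for \'etaleness you show $G^0$ is the union of the preimages of the $G_\alpha^0$ under the projections and invoke the $r$-discrete characterization. The paper's proof simply asserts the key identity $G^0=\bigcup_\alpha p_\alpha^{-1}(G_\alpha^0)$, whereas you supply an actual argument for it via Remark~\ref{specialcase}(3) (fiber-bijectivity of \'etale Haar system preserving morphisms forces units to pull back to units); this is a genuine detail the paper omits, and your justification is the right one.
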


\begin{proof}

As we have noted in the proof of Theorem \ref{mainthm.limitexists}, inverse limits of generalized inverse systems are increasing unions of inverse limits. It is clear that direct limits of principal groupoids (groupoids arsing from equivalence relations) are principal. To see that inverse limits of principal groupoids are principal, we just need to recall that a groupoid $G$ is principal if and only if for every $u \in G^{(0)}$, the isotropy subgroup $\{g \in G: r(g)=s(g)=u\}$ is trivial. For  if $g$ (in the inverse limit $G$) is in its isotropy group at $u\in \G0$,  $p_{\alpha}(g)$ is in the isotropy group of $p_\alpha(u)$ for all $\alpha$. It follows that $g$ is a string of identity elements and is hence an identity element.

To see $G$ is \'etale, just notice that the object space of $G$ is exactly the union of the preimages of the object spaces of all the $G_\alpha$ via the projection maps from inverse limit. It follows then that $G$ has open object space and has a Haar system of measures and so by the characterization in section 2, we have that $G$ is \'etale.
\end{proof}

Using the contravariant functor described in Theorem \ref{functor}, we have the following easy observation.

\begin{Observation}
Every generalized inverse system $\{G_\alpha, p^\alpha_\beta,\Delta\}$ induces a directed system of groupoid convolution algebras. It is straightforward to check that if $G$ is the inverse limit $\varprojlim_\alpha G_\alpha$ then  $\bigcup_\alpha C_c(G_\alpha)$  is dense in $C_c(G)$, with respect to the inductive limit topology. 
\end{Observation}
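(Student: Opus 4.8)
The plan is to unwind the two assertions of the Observation directly from the construction of the inverse limit given in the proof of Theorem \ref{mainthm.limitexists}, together with the functoriality statement of Theorem \ref{functor}. For the first assertion, recall that a partial morphism $\phi:G\to H$ in $\mathcal{G}$ induces a $*$-morphism $\phi^*:C_c(H)\to C_c(G)$ (first pull back, then extend by zero across $G\setminus K$), and that this assignment is functorial: $(\psi\circ\phi)^* = \phi^*\circ\psi^*$. Hence applying $C_c(-)$ to the inverse system $\{G_\alpha, p^\alpha_\beta,\Delta\}$ yields a genuine directed system $\{C_c(G_\alpha), (p^\alpha_\beta)^*,\Delta\}$ in the category of topological $*$-algebras, indexed by the same directed set $\Delta$; there is essentially nothing to prove here beyond invoking Theorem \ref{functor}. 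I would state this in one sentence.

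For the density assertion, let $G=\varprojlim_\alpha G_\alpha$ and recall from the proof of Theorem \ref{mainthm.limitexists} that $G = \bigcup_\alpha Z_\alpha$ is an increasing union of the open Haar subgroupoids $Z_\alpha$, where $Z_\alpha = \varprojlim\{V^\gamma_\beta\}$ is the inverse limit of the subsystem of threads starting at $G_\alpha$, and that the projection $q_\alpha:G\to G_\alpha$ restricts to the genuine (everywhere-defined, surjective, Haar system preserving) inverse-limit projection $Z_\alpha\to G_\alpha$. The key step is: given $f\in C_c(G)$, its support is compact, and since $\{Z_\alpha\}$ is an increasing open cover of $G$, compactness forces $\operatorname{supp}(f)\subset Z_{\alpha_0}$ for some single $\alpha_0$ (using that $\Delta$ is directed). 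So it suffices to approximate, in the inductive limit topology on $C_c(G)$, an arbitrary $f\in C_c(Z_{\alpha_0})$ by elements of $\bigcup_\alpha C_c(G_\alpha)$, where by $C_c(G_\beta)$ I mean its image under $(q_\beta)^* $ composed with the extension-by-zero into $C_c(G)$, for $\beta\ge\alpha_0$.

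The heart of the matter is therefore an approximation statement on a single bona fide inverse limit of an inverse system of locally compact Hausdorff spaces (or groupoids) with proper, surjective, everywhere-defined bonding maps, namely $Z_{\alpha_0} = \varprojlim_{\beta\ge\alpha_0} V^{\beta}_{\alpha_0}$ with projections $q^\beta_{\alpha_0}$. For such limits one has the classical fact that $\bigcup_\beta (q_{\alpha_0,\beta})^* C_c(V^\beta_{\alpha_0})$ is dense in $C_c(Z_{\alpha_0})$ in the inductive limit topology: given $f\in C_c(Z_{\alpha_0})$ with compact support $L$ and $\varepsilon>0$, by continuity of $f$ and a standard argument (functions on the limit that are uniformly continuous are, up to $\varepsilon$, pulled back from some finite stage — this is where properness of the bonding maps, ensuring $q_{\alpha_0,\beta}(L)$ is compact, is used) there is $\beta$ and $g\in C_c(V^\beta_{\alpha_0})$ with $\|f - (q_{\alpha_0,\beta})^*g\|_\infty<\varepsilon$ and with $\operatorname{supp}(g)$ contained in a fixed compact neighborhood independent of $\beta$, so that the approximants stay within one compact set — precisely what the inductive limit topology requires. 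Then $(q_{\alpha_0,\beta})^* g$, extended by zero, lies in the image of $C_c(G_\beta)$ since $q_\beta = q^\beta_{\alpha_0}\circ q_{\alpha_0}$ on $Z_{\alpha_0}$. I expect this uniform-support control — making sure the approximating net lives in a single compact subset of $G$, not merely converging uniformly — to be the one genuinely technical point; everything else is bookkeeping with the notation of the proof of Theorem \ref{mainthm.limitexists}. Since the paper itself says ``It is straightforward to check,'' I would present this as a brief sketch: reduce to one compact support lying in some $Z_{\alpha_0}$, cite the classical density of cylinder functions in $C_c$ of a genuine inverse limit with proper bonding maps, and note the compatibility $q_\beta|_{Z_{\alpha_0}} = q^\beta_{\alpha_0}\circ q_{\alpha_0}$.
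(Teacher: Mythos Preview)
The paper does not actually give a proof of this Observation; it is stated without argument (the phrase ``It is straightforward to check'' is the entirety of the paper's justification). Your proposal is a correct and natural way to fill in the omitted details: invoking Theorem \ref{functor} for the first assertion, and for the density assertion using the decomposition $G=\bigcup_\alpha Z_\alpha$ from the proof of Theorem \ref{mainthm.limitexists} to reduce to a single $Z_{\alpha_0}$, then appealing to the classical density of pulled-back functions on a bona fide inverse limit with proper surjective bonding maps. Your identification of the uniform-support control as the one genuinely technical point is accurate, and your observation that $Z_{\alpha_0}$ is precisely the $q_\beta$-preimage of $V^\beta_{\alpha_0}$ (so that approximants coming from $C_c(V^\beta_{\alpha_0})\subset C_c(G_\beta)$ land in $C_c(Z_{\alpha_0})$ after pullback and extension by zero) is exactly what makes the bookkeeping work. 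There is nothing to compare your approach against beyond this, since the paper offers no alternative argument.
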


\begin{Theorem}\label{continuousfunctor}
Let $\{G_\alpha:p^\alpha_\beta\}$ be an inverse system in $\mathcal{G}$ with surjective partial bonding maps and suppose $G = \varprojlim_\alpha G_\alpha$.. We have that $\{C^*(G_\alpha),(p^\alpha_\beta)^*\}$ and $\{C_r^*(G_\alpha),(p^\alpha_\beta)^*\}$ are inductive systems of $C^*$-algebras and, furthermore, $C^*(G) = \varinjlim_\alpha C^*(G_\alpha)$ and $C_r^*(G) = \varinjlim_\alpha C_r^*(G_\alpha)$.
\end{Theorem}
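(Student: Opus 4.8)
The plan is to prove the theorem in three stages: first establish that the maps in question form inductive systems, then identify $\bigcup_\alpha C^*(G_\alpha)$ (resp. $\bigcup_\alpha C_r^*(G_\alpha)$) as a dense $*$-subalgebra of $C^*(G)$ (resp. $C_r^*(G)$), and finally check that the $C^*$-norm on $C^*(G)$ restricts to the colimit norm on this dense subalgebra, so that the completion is the inductive limit.

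First I would observe that $\{C^*(G_\alpha),(p^\alpha_\beta)^*\}$ is an inductive system: by Theorem \ref{functor}, $\Gamma_{\text{max}}$ and $\Gamma_{\text{min}}$ are contravariant functors on $\mathcal{G}$, so the surjective partial bonding maps $p^\alpha_\beta$ are sent to $*$-morphisms $(p^\alpha_\beta)^*$ satisfying the cocycle conditions $(p^\alpha_\gamma)^* = (p^\alpha_\beta)^* \circ (p^\beta_\gamma)^*$ and $(p^\alpha_\alpha)^* = \text{id}$ coming from Definition \ref{geninvsystem}(3)--(4); and since each $p^\alpha_\beta$ is surjective, the Remark after Definition \ref{generalizedpullback} (together with Propositions \ref{inducedmorphism} and \ref{reduced}) tells us each $(p^\alpha_\beta)^*$ is an isometric $*$-embedding. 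Thus $\{C^*(G_\alpha),(p^\alpha_\beta)^*\}$ is an inductive system of $C^*$-algebras with isometric connecting maps, and likewise for the reduced completions; the inductive limit $A := \varinjlim_\alpha C^*(G_\alpha)$ is then the completion of the increasing union $\bigcup_\alpha C^*(G_\alpha)$ with respect to the common norm, with each $C^*(G_\alpha) \hookrightarrow A$ isometric.

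Next I would use the Observation preceding the theorem, which says $\bigcup_\alpha C_c(G_\alpha)$ is dense in $C_c(G)$ in the inductive limit topology, hence (since the $\|\cdot\|_{\text{max}}$-completion of $C_c(G)$ is $C^*(G)$ and the inclusion $C_c(G)\to C^*(G)$ is $\|\cdot\|_I$-to-$\|\cdot\|_{\text{max}}$ continuous) dense in $C^*(G)$ in the maximal norm; similarly dense in $C_r^*(G)$. So it remains to identify the norm: I must show that for $f \in C_c(G_\alpha)$, the norm of the image of $f$ in $C^*(G)$ equals $\|f\|_{C^*(G_\alpha)}$, i.e. that $q_\alpha^*:C^*(G_\alpha)\to C^*(G)$ is isometric. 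The clean way is to exhibit, for each $\alpha$, a partial morphism going the other way up to restriction: the projection $q_\alpha:G\to G_\alpha$ is itself a surjective partial morphism (its domain is the open subgroupoid $Z_\alpha = q_\alpha^{-1}(G_\alpha)$ from the proof of Theorem \ref{mainthm.limitexists}, and surjectivity onto $G_\alpha$ follows since the bonding maps are surjective and proper, so threads starting at any point of $G_\alpha$ exist). By the surjective case of Propositions \ref{inducedmorphism} and \ref{reduced}, $q_\alpha^* = \Gamma_{\text{max}}(q_\alpha)$ is then an isometric $*$-embedding $C^*(G_\alpha)\hookrightarrow C^*(G)$, and compatibility $q_\alpha^* = q_\beta^* \circ (p^\alpha_\beta)^*$ for $\alpha\ge\beta$ is just functoriality applied to $q_\beta = p^\alpha_\beta\circ q_\alpha|_{Z_\beta}$. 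Therefore the embeddings $q_\alpha^*$ assemble into an isometric $*$-homomorphism $A = \varinjlim_\alpha C^*(G_\alpha) \to C^*(G)$ whose image is the dense subalgebra $\bigcup_\alpha q_\alpha^*(C^*(G_\alpha))$; being isometric with dense image between $C^*$-algebras, it is an isomorphism. The reduced case is identical, using the reduced-norm statements in Proposition \ref{reduced} and the fact (from its proof) that the $\theta$-measures for $G$ push forward to those of $G_\alpha$.

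The main obstacle I anticipate is the isometry step — specifically, verifying that $q_\alpha^*$ is norm-preserving rather than merely norm-decreasing. The subtlety is that $q_\alpha:G\to G_\alpha$ is only a \emph{partial} morphism (defined on $Z_\alpha$, not all of $G$), so I cannot quote Proposition \ref{inducedmorphism} verbatim; I need that the restriction $q_\alpha|_{Z_\alpha}:Z_\alpha\to G_\alpha$ is a genuine surjective Haar system preserving morphism, for which I rely on Example \ref{theorema}'s conclusion $(p_\alpha)_*\mu^u = \mu_\alpha^{p_\alpha(u)}$ pushed through the identification $Z_\alpha = \varprojlim_{\beta\ge\alpha} V^\beta_\alpha$, and on surjectivity of all bonding maps to guarantee $q_\alpha(Z_\alpha) = G_\alpha$. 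Once that is in hand, both the maximal statement (via $\|\cdot\|_{\text{max}} = \sup_\pi\|\pi(\cdot)\|$ and the bijection between representations pulled back and pushed forward) and the reduced statement (via the explicit induced representations $\text{Ind}(\delta_u)$ and the pushforward of the $\theta$-measures) go through as in the proofs of Propositions \ref{inducedmorphism} and \ref{reduced}.
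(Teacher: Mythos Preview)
Your proposal is correct and follows essentially the same approach as the paper: density of $\bigcup_\alpha C_c(G_\alpha)$ in $C_c(G)$ together with the isometry results of Propositions \ref{inducedmorphism} and \ref{reduced} applied to the (surjective) projections $q_\alpha$. In fact you are more careful than the paper's own proof, which simply says one ``applies'' those propositions without singling out the isometry of $q_\alpha^*$; your explicit identification of $q_\alpha|_{Z_\alpha}:Z_\alpha\to G_\alpha$ as a genuine surjective Haar system preserving morphism (via Example \ref{theorema}) is exactly the missing detail that makes the colimit norm agree with the norm on $C^*(G)$.
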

\begin{proof}
This follows immediately from the fact that $\{C_c(G_\alpha):(p^\alpha_\beta)^*\}$ is a directed system of topological $*$-algebras and that $\varinjlim_\alpha C_c(G)$ is dense in $C_c(G)$ (dense with respect to the inductive limit topology) and then by applying Propositions   \ref{inducedmorphism} and \ref{reduced}, respectively. 
\end{proof}

\subsection{Twisted Groupoid \texorpdfstring{$C^*$-Algebras}{C*-Algebras}}\label{dothetwist}

For completeness, we will show how to extend all the results of this section so far to the case of twisted groupoid $C^*$-algebras.

\begin{Definition}\label{defcocycle}
 A 2-cocycle is a map $\sigma : \G2 \to \TT$ such that whenever $(g, h), (h, k) \in \G2$ we have
$$\sigma(g, h)\sigma(gh, k) = \sigma(g, hk)\sigma(h, k).$$
\end{Definition}

The map $\sigma(g, h) = 1$ for all $(g, h) \in \G2$ is always a 2-cocycle, called the \textbf{trivial cocycle}; the set of 2-cocycles on $G$ form a group under pointwise multiplication and pointwise inverse. 

\begin{Definition}
If $(G,\sigma_G)$ and  $(H,\sigma_H)$ are  locally compact groupoids with $2$-cocycles,  a morphism $q:(G,\sigma_G) \to (H, \sigma_H)$ is said to be \textbf{cocycle preserving} if it is a proper morphism of groupoids such that $\sigma_G(g,h) = \sigma_H(q(g),q(h))$. 
\end{Definition}

If $(G, \sigma)$ is a topological groupoid with Haar system $\{\mu^u:u\in \G0\}$, we let $C_c(G,\sigma)$ denote the collection of compactly supported continuous complex valued  functions on $G$. With the following multiplication, adjoint operation, and norm $\|\cdot\|_I$, $C_c(G,\sigma)$ becomes a topological *-algebra. 

$$ f*g(x) = \int_{G} f(xy)g(y^{-1})\sigma(xy,y^{-1})\,d\mu^{s(x)}(y) $$ 

$$ f^*(x) = \overline{f(x^{-1})\sigma(x,x^{-1})}$$

$$\|f\|_I = \text{max} \left\{\sup_{x \in \G0} \int_{G} |f(g)|\,d\mu^u(g), \sup_{x \in \G0} \int_{G} |f(g^{-1})|\,d\mu^u(g)\right\}.$$

The \textbf{maximal (or full) twisted groupoid $C^*$-algebra of $G$}, denoted by $C^*(G,\sigma)$, is defined to be the completion of $C_c(G, \sigma)$ with the following norm
$$\|f\|_{\text{max}} = \sup_{\pi} \|\pi(f)\|,$$
where $\pi$ runs over all continuous (with respect to the norm $\|\cdot\|_I$) *-representations of $C_c(G, \sigma)$. 

The representations $\text{Ind}(\delta_u)$ also make sense for twisted convolution algebras. Let $u\in \G0$ and let $\theta_u$ be the measure on $G$ as defined previously. Define $\text{Ind}(\delta_u):C_c(G,\sigma) \to B(L^2(G,\theta_u))$ by 

$$ \text{Ind}(\delta_u)(f)(\xi)(x) = \int_{G} f(xy)\xi(y^{-1})\sigma(xy,y^{-1})\,d\theta_u(y).$$ 

As for non-twisted convolution algebras, we define the reduced norm $||\cdot||_r$ on $C_c(G,\sigma)$ by 

$$||f||_r = \sup_{u\in \G0} ||\text{Ind}(\delta_u)(f)||$$. The completion of $C_c(G,\sigma)$ with $||\cdot||_r$ will be called the \textbf{minimal (or reduced) twisted groupoid $C^*$-algebra} and will be denoted by $C_r^*(G,\sigma)$.

It is a straightforward to adapt the proofs of Propositions \ref{reduced}  and \ref{inducedmorphism} to show that if $q:(G,\sigma_G) \to (G,\sigma_H)$ is Haar system preserving and cocycle preserving then the induced map $q^*:C_c(H,\sigma_H) \to C_c(G,\sigma_G)$ (the pullback map)  extends to maximal and minimal completions. 

Let $(G,\sigma)$ be a topological groupoid with $2$-cocycle. Notice that if $U\subset G$ is an open subgroupoid then the restriction $\sigma|_U$ is a cocycle on $U$ and, furthermore, if $U$ is a Haar subgroupoid of $G$ then the natural inclusion $(K,\sigma|_K) \hookrightarrow (G,\sigma)$ induces a $*$-embedding $C_c(K,\sigma|_{K}) \hookrightarrow C_c(G,\sigma)$.  To avoid introducing new notation for this extended category, we will call a pair $(K,\sigma|_{K})$ a \textbf{Haar subgroupoid} of $(G,\sigma)$. For the sake of simplifying notation, if we say $(K,\tau)$ is a Haar subgroupoid of $(G,\sigma)$, we mean that $\tau = \sigma|_K$. We analogously define a \textbf{partial morphism} $(\phi,K):(G,\sigma_G)\to (H,\sigma_H)$ to be a Haar subgroupoid $(K,\tau)\subset (G,\sigma_G)$ and a cocycle preserving and Haar system preserving morphism $\phi:(U,\tau)\to (H,\sigma_H)$.  We define the \textbf{induced map} to be the composition 
$$ C_c(H,\sigma_H) \overset{f^*}\to  C_c(K,\tau) \xrightarrow{\text{canonical}} C_c(G,\sigma_G).$$

Let $\mathcal{G}_\sigma$ denote the category whose objects are pairs $(G,\sigma)$ where  $G$ is a topological groupoid (with a fixed Haar system of measures) and equipped with a 2-coclycle $\sigma:G^{(2)}\to \TT$ and whose morphisms are partial morphisms. 

\begin{Theorem}\label{twistedfunctor}
Let $\mathcal{C}$ denote the category of $C^*$-algebras with $*$-morphisms. The association $\Gamma^{\sigma}_{\text{max}}$ ($\Gamma^{\sigma}_{\text{min}}$) which takes each object of $\mathcal{G}^\sigma$ and assigns its maximal (minimal) groupoid $C^*$-algebra and assigns partial morphisms to their induced map is a contravariant functor. Moreover, both $\Gamma^{\sigma}_{\text{max}}$  and $\Gamma^{\sigma}_{\text{min}}$  are extensions of the Gelfand duality functor $\Gamma_c$.
\end{Theorem}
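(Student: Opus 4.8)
The plan is to mirror, essentially verbatim, the development already carried out in the untwisted case (Propositions \ref{inducedmorphism} and \ref{reduced}, Theorem \ref{functor}), checking at each step that the cocycle does not obstruct the argument. First I would observe that the statement has three parts: (a) $\Gamma^\sigma_{\text{max}}$ and $\Gamma^\sigma_{\text{min}}$ are well defined on objects, which is immediate since $C^*(G,\sigma)$ and $C^*_r(G,\sigma)$ were just constructed; (b) they are well defined on morphisms, i.e.\ a partial morphism $(\phi,K):(G,\sigma_G)\to(H,\sigma_H)$ induces a genuine $*$-morphism of the maximal (resp.\ minimal) twisted $C^*$-algebras; and (c) composition and identities are respected, giving functoriality, together with the fact that on the full subcategory $\mathcal{T}$ of spaces (where every cocycle is trivial) the functor restricts to the Gelfand functor $\Gamma_c$.

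For part (b), the key computation is that the pullback $\phi^*:C_c(H,\sigma_H)\to C_c(K,\tau)$ is a $*$-algebra homomorphism for the twisted convolution product. This is the analogue of the displayed five-line computation in the proof of Proposition \ref{inducedmorphism}: one writes out $(\phi^*(f_1)*\phi^*(f_2))(y)$ using the twisted formula, pushes the integral over $\mu^{s(y)}$ forward along $\phi$ to an integral over $\nu^{\phi(s(y))}$ (this is exactly where Haar system preservation is used), and uses the cocycle-preserving identity $\sigma_G(g,h)=\sigma_H(\phi(g),\phi(h))$ to replace the $\sigma_G$-factor by a $\sigma_H$-factor, arriving at $(f_1*f_2)(\phi(y))$. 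The adjoint is handled the same way using $f^*(x)=\overline{f(x^{-1})\sigma(x,x^{-1})}$ and cocycle-preservation. One then checks $\phi^*$ is $I$-norm decreasing as before, so it extends to the maximal completions; for the minimal completions one repeats the argument of Proposition \ref{reduced}, namely that $\mathrm{Ind}(\delta_{\phi(u)}) = \mathrm{Ind}(\delta_u)\circ\phi^*$ because $\nu^{\phi(w)}=\phi_*\mu^w$ and $\delta_{\phi(x)}=\phi_*\delta_x$, which forces $\theta_{\phi(u)}$ to be a pushforward of $\theta_u$; the twisted factor $\sigma_H(\phi(xy),\phi(y^{-1}))=\sigma_G(xy,y^{-1})$ rides along without trouble. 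Finally, composing $\phi^*$ with the canonical inclusion $C_c(K,\tau)\hookrightarrow C_c(G,\sigma_G)$ (which is a $*$-embedding because $\tau=\sigma_G|_K$, as noted just before the statement) produces the induced map, a $*$-morphism of maximal (resp.\ minimal) twisted groupoid $C^*$-algebras.

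For part (c), functoriality reduces to the observation, already made for $\mathcal{G}$ before Theorem \ref{functor}, that the composition of two partial morphisms $(\phi,K)$ and $(\psi,L)$ has domain the Haar subgroupoid $\phi^{-1}(L)$ and bonding map $\psi\circ\phi|_{\phi^{-1}(L)}$; since each of $\phi$ and $\psi$ is cocycle preserving, so is the composite, and at the level of convolution algebras $(\psi\circ\phi)^*=\phi^*\circ\psi^*$ followed by the evident compatibility of the canonical inclusions $C_c(\phi^{-1}(L),\cdot)\hookrightarrow C_c(K,\cdot)\hookrightarrow C_c(G,\sigma_G)$. Identities are sent to identities trivially. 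That $\Gamma^\sigma_{\text{max}}$ and $\Gamma^\sigma_{\text{min}}$ extend $\Gamma_c$ is then immediate: a locally compact Hausdorff space $X$, viewed as a groupoid with only identity arrows, carries only the trivial $2$-cocycle, $C_c(X,\sigma)=C_c(X)$ with the ordinary pointwise product, and $C^*(X)=C^*_r(X)=C_0(X)$, while a partial proper continuous map $f:X\to Y$ (with open domain $U\subset X$) induces the composite $C_0(Y)\xrightarrow{f^*}C_0(U)\hookrightarrow C_0(X)$, which is precisely the value of the (extended) Gelfand functor $\Gamma_c$ on that partial map.

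I do not expect any serious obstacle: the only place something could go wrong is a sign or conjugation error in reconciling the twisted convolution formula with the pushforward of measures, so the main care required is bookkeeping of the $\sigma$-factors under $\phi$ in the convolution and adjoint computations — everything else is a transcription of the proofs of Propositions \ref{inducedmorphism} and \ref{reduced} and the remarks preceding Theorem \ref{functor}. It is worth noting explicitly in the write-up that the adapted versions of those two propositions were already asserted in the paragraph before the statement, so the proof here is genuinely just assembling those adapted propositions with the elementary facts about composing partial morphisms and about cocycles restricting to open subgroupoids.
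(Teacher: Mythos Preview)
Your proposal is correct and follows precisely the approach the paper itself takes: the paper does not give a separate proof of this theorem, but instead remarks (in the paragraph preceding the statement) that it is straightforward to adapt the proofs of Propositions \ref{inducedmorphism} and \ref{reduced} to the cocycle-preserving setting, and then states the theorem without further argument. Your write-up simply carries out that adaptation in detail, which is exactly what the authors intend.
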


The reader can easily check that Theorem \ref{mainthm.limitexists} can be extended also to include cocycles. Being that we are not using cocycles in our inverse systems of groupoids for our applications, we will omit the proof and defer the reader to Section 3 of \cite{BL2} for taking inverse limits of cocycled groupoids.

\section{Examples}\label{Examples}

The purpose of this section is to show how to use Theorem \ref{mainthm.limitexists} to build a wide variety of groupoid models for inductive systems and direct limits of $C^*$-algebras.

\subsection{AF Algebras}\label{UHF Algebras}

In this subsection, we will show how to model lots of unital morphisms between finite dimensional $C^*$-algebras. It will more or less follow from Theorem \ref{mainthm.limitexists} that one can model any inductive system of AF-algebras and therefore one can model any AF algebra with a groupoid, an already well known result (see \cite{ER} for example). The key focus here is not that we can model any (unital) AF algebra, but that we can model, up to unitary equivalence, all unital $*$-embeddings between finite dimensional $C^*$-algebras.

\begin{Example}\label{matrixgroupoid}
For any $n$, there exists a groupoid $G_n$ with $M_n$ as groupoid $C^*$-algebra. 
\end{Example}
\begin{proof}
Let $G_n$ denote the product groupoid $\{1,2,\hdots, n\} \times \{1,2,\hdots, n\}$; i.e. the trivial equivalence relation on an $n$ point set.  We notice that $G_n$ has precisely $n^2$ elements and, if $G$ is given the discrete topology and Haar system of measures equal to the counting measures over each object, one can identify $C^*(G_n)$ with the set $M_n$ of $n \times n$ matrices by simply noticing that the characteristic function of a point $(i,j)$ behaves exactly as the elementary matrix $e_{i,j}$.
\end{proof}

\begin{Remark}\label{ProdOfMatrixgroupoids}
It is very easy to see by definitions that $G_n\times G_m \cong G_{mn}$ for any numbers $m$ and $n$ (in fact, this follows from the commutativity of taking products of sets). 
\end{Remark}

As we know that any finite dimensional $C^*$-algebra is isomorphic to $M_{n_1} \oplus M_{n_2} \oplus \cdots  \oplus M_{n_k}$, we have the following example, which follows from Example \ref{matrixgroupoid} and the fact that $C^*(G\sqcup H) \cong C^*(G)\oplus C^*(H)$ where the disjoint union of groupoids is defined in the canonical way.

\begin{Example}\label{finitedimmodel}
For any finite dimensional $C^*$-algebra $F$, there exists a groupoid $G_F$ whose groupoid $C^*$-algebra is $F$.
\end{Example}

A subclass of AF algebras of immense importance are the UHF algebras, which are inductive limits of matrix algebras where the connecting maps are unital. As an illustration of the \ref{mainthm.limitexists}, we describe UHF algebras through a limiting process. Our building blocks are the groupoids $G_n$ from \ref{matrixgroupoid}.

\begin{Example}\label{UHF}
For any UHF algebra $U$, there exists a groupoid $G$ whose groupoid $C^*$-algebra is $U$.
\end{Example}
\begin{proof}
Let $n = \Pi_{k=0}^{\infty}\,n_k$ be the supernatural number associated with the sequence of natural numbers $n_1,n_2, \dots $, and let $U$ be the UHF algebra associated with $n$. Defining $G^k=G_{n_1} \times G_{n_2} \times \hdots \times G_{n_{k}}$, we consider the generalized inverse system with bonding maps given by  the projection $ G_{n_1} \times G_{n_2} \times \hdots \times G_{n_{l-1}}\times G_{n_l}^0 \to G^{l-1}$, which by Proposition \ref{reasonforgenpullbacks} induces an unital embedding given by $T\to  T\otimes \text{id}_{l}$. By   \ref{mainthm.limitexists} we conclude that $G_n = \varprojlim_{k} G^k$ exists, and it clearly has $C^*$ algebra $U$.

\end{proof}

A certain class of the above groupoid models for UHF-algebras have nice self absorbing properties. Recall that Proposition \ref{producttotensor} says that the groupoid $C^*$ algebra of a product $G\times H$ is the tensor product $C^*(G)\otimes C^*(H)$. It follows then that if a groupoid $G$ has the property that if $G\times G$ and $G$ are isomorphic (via a Haar system preserving isomorphism), then $C^*(G)$ is self absorbing. We say that a groupoid $G$ is \textbf{self absorbing} if there is a Haar system preserving isomorphism betweeen $G$ and $G\times G$. Notice that if $G$ is self absorbing, then one clearly has $C_c(G)\otimes C_c(G) \cong C_c(G)$ and hence self absorbing groupoids are stronger than self absorbing $C^*$-algebras as there is no need to appeal to completions.

\begin{Proposition}
Let $n$ be a supernatural number such that, if a prime $p| n$ then $p^{\infty} |n$, then the UHF groupoid $G_{n}$ is self absorbing.
\end{Proposition}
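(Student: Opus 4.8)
The plan is to show directly that the groupoid $G_n$ constructed in Example \ref{UHF} is isomorphic, via a Haar system preserving isomorphism, to $G_n \times G_n$, and then invoke Proposition \ref{producttotensor} together with the discussion preceding the statement. Since $G_n = \varprojlim_k G^k$ where $G^k = G_{n_1}\times\cdots\times G_{n_k}$ for a chosen enumeration $n_1, n_2, \dots$ of the prime-power factors of $n$, and since $G_a \times G_b \cong G_{ab}$ by Remark \ref{ProdOfMatrixgroupoids}, the hypothesis that $p \mid n \implies p^\infty \mid n$ means that every prime appearing in $n$ appears with infinite multiplicity. First I would record that $G_n \times G_n$ is again an inverse limit of a sequence of the form $G_{m_1}\times G_{m_1}\times G_{m_2}\times G_{m_2}\times\cdots$, i.e. of products of matrix groupoids $G_p$ with $p$ prime, in which each prime $p \mid n$ occurs infinitely often --- exactly the same combinatorial data (as a multiset of prime-power factors, with infinite multiplicities) that defines $G_n$ itself.

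The key step is then to promote this equality of ``supernatural data'' to an honest Haar system preserving isomorphism of the inverse limit groupoids. Concretely, I would fix a bijection $\theta$ between the index set of prime factors of $G_n$ (with multiplicity) and that of $G_n \times G_n$ which is compatible with the prime labels --- this exists precisely because each prime has countably infinite multiplicity on both sides, so the two multisets are in label-preserving bijection. This bijection induces, for each finite stage, an isomorphism between a cofinal subsequence of the partial products defining $G_n$ and a cofinal subsequence of those defining $G_n \times G_n$, intertwining the bonding maps (each bonding map is, up to the identification $G_a\times G_b\cong G_{ab}$, of the form $(\text{projection onto } G^0 \text{ of the new factor})$, i.e. the partial morphism from Proposition \ref{reasonforgenpullbacks}). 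By the universal property of inverse limits in $\mathcal{G}$ (Theorem \ref{mainthm.limitexists}) applied to these matched cofinal subsystems, one obtains a partial morphism $G_n \to G_n \times G_n$ and an inverse partial morphism, and functoriality of $\varprojlim$ forces their composites to be the identities; since each stage map is a genuine (everywhere-defined) surjective Haar system preserving isomorphism of finite discrete groupoids, the resulting limit isomorphism is a Haar system preserving isomorphism in the strict sense (not merely a partial morphism). Finally, $C^*(G_n) \cong C^*(G_n \times G_n) \cong C^*(G_n)\otimes C^*(G_n)$ by Proposition \ref{producttotensor}, so $C^*(G_n)$ is self absorbing, and in fact $C_c(G_n)\otimes C_c(G_n)\cong C_c(G_n)$ by the remark preceding the statement.

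The main obstacle I anticipate is purely bookkeeping rather than conceptual: one must choose the enumerations of prime factors on the two sides and the bijection $\theta$ carefully enough that the induced maps genuinely commute with the bonding maps at each finite stage --- i.e. that the ``interleaving'' of the two copies of $G_n$ inside $G_n \times G_n$ can be matched term-by-term, along a cofinal subsequence, with a single copy of $G_n$. This is where the hypothesis $p \mid n \implies p^\infty \mid n$ is essential and must be used; without it the multiset of exponents of $n^2$ differs from that of $n$ (exponents double) and no such bijection exists. Once the enumerations are set up so that every prime is exhausted ``diagonally'' on both sides, the verification that the square of bonding maps and stage isomorphisms commutes is a routine diagram chase using only Remark \ref{ProdOfMatrixgroupoids} and the explicit description of the bonding maps as the projections of Proposition \ref{reasonforgenpullbacks}.
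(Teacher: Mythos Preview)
Your proposal is correct and captures the essential mechanism: under the hypothesis, $n^2 = n$ as supernatural numbers, so the multisets of prime factors (with infinite multiplicity) defining $G_n$ and $G_n\times G_n$ coincide, and a label-preserving bijection of index sets promotes to a Haar system preserving isomorphism of the inverse limits.

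The paper reaches the same conclusion but by a more concrete route that avoids the bookkeeping obstacle you anticipate. Rather than fixing an abstract bijection $\theta$ and then verifying, along cofinal subsequences, that it intertwines the projection bonding maps, the paper simply \emph{chooses} a particular inverse-limit model for $G_n$ in which self-absorption is manifest. Writing $n=\prod_i p_i^\infty$, it sets $H^k=\prod_i G_{p_i^{k}}$ and $G^k=\prod_i G_{p_i^{2k}}$ (each with the bonding maps of Proposition~\ref{reasonforgenpullbacks}), notes that $H^k\times H^k\cong G^k$ by Remark~\ref{ProdOfMatrixgroupoids}, and that $\{G^k\}$ sits inside $\{H^k\}$ as a cofinal subsystem; hence
\[
G\times G \;=\; \varprojlim_k\,(H^k\times H^k) \;=\; \varprojlim_k G^k \;=\; \varprojlim_k H^k \;=\; G.
\]
Thus no bijection needs to be constructed and no diagram needs to be chased: the equality of inverse limits is read off directly from the identification of one system as a cofinal subsystem of the other. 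Your approach has the advantage of making clear that the result does not depend on which model of $G_n$ one starts from, whereas the paper's approach buys a one-line proof at the cost of fixing a convenient model at the outset.
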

\begin{proof}
Let $n = \Pi_{i} p_i^\infty$ and, for each $k\ge 1$, let $G^k = \Pi_{i=1}^k G_{p_i^{2k}}$ and let the bonding maps $G^k \to G^{k-1}$ be defined as follows: notice that $G^k =G^{k-1} \times \Pi_{i=1}^{k-1} G_{p_i^{2}} \times G_{p_k^{2k}}$ and notice that, by Proposition \ref{reasonforgenpullbacks}, the partial bonding map $G^{k-1} \times \Pi_{i=1}^{k-1} \G0_{p_i^{2}} \times \G0_{p_k^{2k}} \to G^{k-1}$ given by coordinate projection induces the morphism $T\to T\otimes \text{id}_{p_1^2p_2^2\hdots p_{k-1}^2p_k^{2k}}$. Clearly, the inverse limit of the sequence of $G^k$ with bonding maps just described is a groupoid model for $M_n$. Alternatively, suppose that $H^k$ was defined to be $\Pi_{i=1}^k G_{p_i^{k}}$ with bonding maps defined analogously, we would also have that the inverse limit of the groupoids $H^k$ would be a groupoid model for $M_n$. Notice that the generalized inverse system $\{G^k\}$ is equal to a subsystem of $\{H^k\}$ (as $G^k = H^{2k}$ and the bonding maps match up exactly) so they clearly have the same generalized inverse limit, call it $G$. 

Notice that $G\times G = \varprojlim (H^k\times H^k) = \varprojlim G^k = G$ and so we conclude that $G$ is self absorbing.

\end{proof}

\begin{Example}\label{AF}
For every unital AF-algebra $A$, there exists a generalized inverse sequence $\{G_\alpha,p^\alpha_\beta,D\}$ where
\begin{itemize}
    \item $G_\alpha$ is a model for a finite dimensional algebra as constructed in Example \ref{finitedimmodel} for each $\alpha$.
    \item $\varprojlim G_\alpha$ is a groupoid model for $A$.
\end{itemize}
\end{Example}
\begin{proof}
Recall that a unital AF algebra is a direct limit of finite dimensional algebras with unital bonding maps. As soon as we have shown that any unital bonding map between finite dimensional algebras can be modeled with partial morphisms, we are finished because one can use Example \ref{finitedimmodel} for the pieces in the direct system. Note also, that our problem can be reduced to showing that the unital embeddings $M_k\hookrightarrow \sum_{j} M_{jk}$ given by $T\to \sum_{j } T\otimes \text{id}_{j}$ can be obtained as an induced map of a partial morphism. This just amounts to using Proposition \ref{reasonforgenpullbacks} on each piece of the groupoid $\bigsqcup_{j} G_{jk}$.
\end{proof}

It would be interesting to understand the relationship between the groupoid constructed in the proof of Theorem \ref{AF} and the groupoids obtained in \cite{ER} obtained using tail equivalence relations on a Bratteli Diagram.

\subsection{Infinite Tensor Powers of Groupoid \texorpdfstring{$C^*$}{C*}-algebras}\label{infinitetensorpowers}

In this section, we will provide an alternative description of infinite tensor powers of groupoid $C^*$-algebras using our inverse limit machinery. If the reader wishes only to deal with unital $C^*$-algebras, then they just need to recall that a groupoid $C^*$-algebra is unital when the groupoid in question is \'etale with compact object space.

First, we will describe a ``spectral decomposition'' of the $C^*$-algebra $\bigotimes_{i=1}^{\infty}(F) := \varinjlim_j \otimes_{i=1}^j F$ where $F$ is a finite dimensional $C^*$-algebra as this algebra has many important applications in dynamics.  Let $n_1,n_2,\hdots,n_k \in \ZZ_{\ge 0}$ be such that $F =\oplus_{i=1}^k M_{n_i}$.  It is easy to see that this is isomorphic to the continuous sections of a continuous field of UHF algebras over the Cantor set $K = \{n_1,n_2,\hdots , n_k\}^{\NN}$ in which the fiber of $(n_{\alpha(i)})_{i\in \ZZ }$ is the algebra $\otimes_{i\in \ZZ} M_{n_{\alpha_i}}$. Notice that each of the fibers of the bundle are unital and hence $C(K)$ embeds into $A$. In fact, $C(K)$ is exactly the center of $A$. 

Let $G = G_{n_1} \sqcup G_{n_2}\sqcup \hdots \sqcup G_{n_k}$ with counting measures as Haar system. The groupoid $C^*$-algebra for $G$ is clearly $\oplus_{i=1}^nM_{n_1} = F$. For each $k\ge 1$, we define the groupoid $G^k = \underbrace {G\times G\times \hdots G}\limits_{\text{n times}}$ and notice that counting measures is a Haar system of measures for $G^k$. Clearly the generalized inverse system $(G^k, \pi_k)$ where the partial maps $\pi_k:G^k\times \G0 \to G^k$ is the projection mapping has generalized inverse limit $H$ with $\bigotimes_{i=1}^{\infty}(F)$ as groupoid $C^*$-algebra.

In general, let $\{G_i\}$ be a countable collection of topological groupoids . By all the observations made thusfar, it is clear that the inverse limit of  the groupoids $H_k = \Pi_{i=1}^k G_k$ with partial bonding maps $\pi^m_k:H^k \times \G0_{k+1}\times \hdots \G0_{k_m} \to H_k$ for $m\ge k$ has groupoid $C^*$-algebra equal to $\otimes_{i\in \NN} C^*(G_i)$. The analogous statements hold for reduced completions.

\subsection{Groupoid Models for The Crossed product of Infinite Tensor Powers by Bernoulli Actions}

Let $\Gamma$ be a countable discrete group and let $G$ be a locally compact Hausdorff groupoid with Haar system of measures $\{\mu^x:x\in \G0\}$. We will show how the crossed product $\bigotimes_\Gamma C^*(G)\rtimes \Gamma$ has a groupoid model in the case the action is by the Bernoulli shift (i.e. $\Gamma$ just shifts the indices of the tensor factors). Let $G^\infty$ denote the groupoid model for $\bigotimes_{\Gamma} C^*(G)$ as in the previous subsection. Let $\gamma \in \Gamma$ and notice that the shift of the indices on the left by $\gamma$ of $G^\infty$ when viewed as a subspace of the product $\Pi_{\Gamma} G$ induces a Haar system preserving groupoid isomorphism of $G^\infty$ whose induced map is exactly the automorphism of $\bigotimes_\Gamma C^*(G)$ given by shifting the indices on the left by $\gamma$. The reader can easily verify that shifting the indices of $G^\infty$ on the left by elements of $\Gamma$ is an action of $\Gamma$ on $G$ and, by our observations, it induces the Bernoulli action on $\bigotimes_{\Gamma} C^*(G)$ (see \cite{Brown} for a definition of one groupoid acting on another and what the associated semi-direct product groupoid is). It is a straightforward check that the groupoid $C^*$-algebra of the semi-direct product groupoid $G^\infty \rtimes \Gamma$ (whose Haar system is given by the product the original Haar system with counting measure) is a groupoid model for $\bigotimes_\Gamma C^*(G) \rtimes \Gamma$. By the same reasoning, we also have that  $C^*_r(G^\infty\rtimes \Gamma) \cong \bigotimes_{\Gamma}C^*_r(G)\rtimes \Gamma$. 

The importance of examples like this is the connection to the group $C^*$-algebras of wreath products like $\ZZ \wr \ZZ$, see \cite{DaPS}.

\subsection{Groupoid Crossed Products By an Endomorphism}

Recall that the Cuntz Algebras $O_n$ for $1\leq n <\infty$ are defined as the crossed product of the UHF algebra $M_{n^\infty}$ by the cannonical endomorphism which maps $M_{n^{\infty}}\otimes M_n$ isomorphically into its subalgebra $M_{n^\infty}\otimes e_{1,1}$.

There are many groupoid models for $O_n$, but we  want to explain how one can create an alternative groupoid model for $O_n$ in a way that highlights our above description. First, if $G$ is the groupoid model of $M_{n^{\infty}}$ obtained in Example \ref{UHF} then notice that $G\times G_n \cong G$ (where $G_n$ is defined in as Example \ref{matrixgroupoid}), the latter which is clearly groupoid isomorphic (preserving the Haar system) to the Haar subgroupoid $G\times \{e_{1,1}\}$ of $G\times G_n$. Let $\phi: G\times G_n \to G$ be the partial map which projects the Haar subgroupoid $G\times \{e_{1,1}\}$ to its first coordinate. The reader can easily check that the induced map is the endomorphism described above.

The only examples of groupoid crossed products by endomorphisms that seem to exist in the literature is the work \cite{D} by V. Deaconu. In his work, he shows how to define a crossed product of a compact Hausdorff space by a surjective self map and shows that his construction corresponds to crossed products when the self map is a genuine homeomorphism.  We propose an extension of Deaconu's construction by considering surjective partial maps of groupoids. If the reader is interested in such a construction, we refer the reader to the work \cite{Brown} by R. Brown for an introduction to groupoid semi-direct products.

\section{Basics of Modeling with Partial Morphisms}\label{basicsofmodeling}

One of the easiest ways of constructing new groupoids from old ones will be to use the quotient criterion which shows up as Proposition 3.4 of \cite{AG}.

\begin{Proposition}\label{Haarquotient}[Quotient Criterion]
Let $G$ and $H$ be topological groupoids and let $q:G\to H$ be a surjective morphism of  groupoids such that $q$ is topologically a quotient map (or an open map). Suppose $G$ has a Haar system of measures $\{\mu^x:x\in G^0\}$. If, for all $u\in H^{0}$ and for all $v\in q^{-1}(u)$ and all $f\in C_0(H)$, we have 

 $$\int_G (f \circ q) \, d\mu^u = \int_G (f \circ q) \, d\mu^v$$
then $H$ admits a natural Haar system of measures $\{\nu^u:u\in H^{0}\}$ that makes $q$  Haar system preserving.
\end{Proposition}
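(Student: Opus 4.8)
The plan is to construct the candidate Haar system on $H$ directly by pushing forward the measures on $G$, and then verify the three defining axioms of a Haar system together with the Haar-system-preserving property. The hypothesis is precisely what makes the pushforward well-defined independently of the choice of preimage.

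\textbf{Step 1: Define the measures.} For each $u\in H^{0}$, pick any $v\in q^{-1}(u)$ (nonempty since $q$ is surjective on objects, being a surjective functor) and define $\nu^u := q_*\mu^v$, i.e.\ $\int_H f\,d\nu^u = \int_G (f\circ q)\,d\mu^v$ for $f\in C_c(H)$. The hypothesis guarantees this does not depend on the choice of $v$, so $\nu^u$ is well-defined. Since $\mu^v$ is a positive Radon measure and $q$ is continuous, $q_*\mu^v$ is a positive Radon functional on $C_c(H)$, hence by Riesz--Markov--Kakutani (Remark \ref{Radon}) corresponds to a positive Radon measure $\nu^u$. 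The support condition ($\nu^u$ supported on $H^u$) follows because $\mu^v$ is supported on $G^v$ and $q(G^v)\subseteq H^{q(v)}=H^u$ as $q$ is a functor (it intertwines range maps). By construction $q_*\mu^v=\nu^{q(v)}$ for every $v\in G^0$, which is exactly the Haar-system-preserving condition, so that part is automatic once the axioms are checked.

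\textbf{Step 2: Continuity.} For $f\in C_c(H)$, the function $u\mapsto \int_H f\,d\nu^u$ must be continuous on $H^0$. Fix $u_0\in H^0$, pick $v_0\in q^{-1}(u_0)$. Using that $q$ is an open (or quotient) map, a neighborhood basis of $u_0$ in $H^0$ is the image under $q$ of a neighborhood basis of $v_0$; combined with local sections or just the fact that $q\colon G^0\to H^0$ is open and surjective, one reduces continuity of $u\mapsto\int_H f\,d\nu^u$ near $u_0$ to continuity of $v\mapsto \int_G(f\circ q)\,d\mu^v$ near $v_0$, which holds since $f\circ q\in C(G)$ has support mapping properly and axiom (2) of the Haar system on $G$ applies to $f\circ q$ (one may need to multiply by a compactly supported cutoff on $G$ that equals $1$ on the relevant part of the support of $f\circ q$; properness of $q$ ensures $\operatorname{supp}(f\circ q)\cap G^v$ stays in a fixed compact set as $v$ ranges over a compact neighborhood). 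This cutoff bookkeeping is where the openness/quotient hypothesis on $q$ does its real work.

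\textbf{Step 3: Left invariance.} For $x\in H^1$ and $f\in C_c(H)$ we need $\int_H f(y)\,d\nu^{r(x)}(y)=\int_H f(xy)\,d\nu^{s(x)}(y)$. Lift $x$ to some $\tilde x\in G^1$ with $q(\tilde x)=x$ (possible since $q$ is surjective as a functor on arrows). Then $r(\tilde x)\in q^{-1}(r(x))$ and $s(\tilde x)\in q^{-1}(s(x))$, so $\nu^{r(x)}=q_*\mu^{r(\tilde x)}$ and $\nu^{s(x)}=q_*\mu^{s(\tilde x)}$ by well-definedness. Now $\int_H f\,d\nu^{r(x)}=\int_G (f\circ q)\,d\mu^{r(\tilde x)}=\int_G (f\circ q)(\tilde x y')\,d\mu^{s(\tilde x)}(y')$ by left invariance of the Haar system on $G$ applied to $f\circ q\in C_c(G)$; and $(f\circ q)(\tilde x y')=f(q(\tilde x)q(y'))=f(x\,q(y'))$ since $q$ is a functor, so this equals $\int_G (g\circ q)\,d\mu^{s(\tilde x)}$ where $g(y)=f(xy)\in C_c(H)$, which is $\int_H g\,d\nu^{s(x)}=\int_H f(xy)\,d\nu^{s(x)}(y)$.

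\textbf{Main obstacle.} The only genuinely delicate point is the continuity axiom (Step 2): transporting continuity of $v\mapsto \int_G (f\circ q)\,d\mu^v$ down to continuity of $u\mapsto \int_H f\,d\nu^u$ requires knowing that locally one can choose the preimage $v$ continuously in $u$, or at least control the supports uniformly, and this is exactly where the hypothesis that $q$ is open (or a quotient map) plus properness of $q$ is used. Everything else is a formal consequence of $q$ being a functor, surjectivity of $q$ on arrows and objects, and the defining properties of the Haar system on $G$ applied to functions of the form $f\circ q$.
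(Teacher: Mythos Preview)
The paper does not actually prove this proposition; it merely cites it as Proposition~3.4 of \cite{AG}. So there is no in-paper argument to compare against, and your proposal stands on its own.

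Your approach is the natural one and is correct in outline. Two comments on Step~2, which you yourself flag as the delicate part. First, since in this paper a \emph{groupoid morphism} is by Definition~\ref{haarpreserving} a \emph{proper} continuous functor, $q$ is proper and hence $f\circ q\in C_c(G)$ directly for any $f\in C_c(H)$; no cutoff argument is needed, and axiom~(2) for the Haar system on $G$ applies immediately to give continuity of $v\mapsto\int_G(f\circ q)\,d\mu^v$ on $G^{(0)}$. Second, to descend this continuity to $H^{(0)}$ you do not need local sections or to argue that $q|_{G^{(0)}}$ is open (restrictions of open maps need not be open). A cleaner route: let $\varphi(v)=\int_G(f\circ q)\,d\mu^v$ and $\psi(u)=\int_H f\,d\nu^u$, so $\psi\circ q|_{G^{(0)}}=\varphi$. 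Since $q$ is a functor, $r_H\circ q=q\circ r_G$, hence $(\psi\circ r_H)\circ q=\psi\circ q\circ r_G=\varphi\circ r_G$, which is continuous on all of $G$. As $q$ is a quotient map, $\psi\circ r_H$ is continuous on $H$, and restricting to $H^{(0)}$ (where $r_H=\mathrm{id}$) gives continuity of $\psi$. With this adjustment your Step~2 is complete and the remaining steps are fine as written.
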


\begin{Definition}
Suppose that $G$ is a locally compact groupoid with fixed Haar system and Haar subgroupoid $H$. If $q:H\to K$ is a quotient map that satisfies the hypotheses of Proposition \ref{Haarquotient}, then we call $K$, with the induced Haar system, a \textbf{partial quotient of $G$} and we call $q$ the \textbf{partial quotient map}. 
\end{Definition}

The following Lemma follows easily from Lemma \ref{producttotensor} and Example \ref{matrixgroupoid}. 

\begin{Lemma}
For any natural number $n$ and locally compact space $X$, there exists a groupoid $G_n(X)$ whose groupoid $C^*$-algebra is $M_n(C_0(X))$.
\end{Lemma}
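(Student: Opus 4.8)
The plan is to build $G_n(X)$ as the product groupoid $G_n \times \mathcal{X}$, where $G_n$ is the matrix groupoid from Example \ref{matrixgroupoid} (the trivial equivalence relation on $\{1,\dots,n\}$ with counting measures) and $\mathcal{X}$ denotes the locally compact Hausdorff space $X$ viewed as a co-trivial groupoid (only identity arrows, with the point masses $\delta_x$ as Haar system). First I would note that by the Definition of product of groupoids given before Lemma \ref{producttotensor}, the product $G_n \times \mathcal{X}$ is a locally compact Hausdorff groupoid carrying the Haar system $\{\text{(counting measure on }G_n^i) \times \delta_x : (i,x) \in \{1,\dots,n\}\times X\}$, so it is a bona fide object of $\mathcal{G}$.

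Next I would apply Lemma \ref{producttotensor}, which gives $C^*(G_n \times \mathcal{X}) \cong C^*(G_n) \otimes_{\max} C^*(\mathcal{X})$. By Example \ref{matrixgroupoid} we have $C^*(G_n) \cong M_n$, and since $\mathcal{X}$ is the space groupoid on $X$, its groupoid $C^*$-algebra is just $C_0(X)$ (the convolution algebra $C_c(\mathcal{X})$ is $C_c(X)$ with pointwise product, and all completions agree with the commutative sup-norm completion — this is exactly the statement that $\Gamma_{\max}$ and $\Gamma_{\min}$ extend the Gelfand functor, Theorem \ref{functor}). Therefore
$$C^*(G_n(X)) \cong M_n \otimes_{\max} C_0(X) \cong M_n(C_0(X)),$$
the last isomorphism being the standard identification of $M_n \otimes A$ with $M_n(A)$, which is automatic for the finite-dimensional nuclear factor $M_n$ so that there is no ambiguity between maximal and minimal tensor products. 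The same computation with the reduced completion and Lemma \ref{producttotensor} gives $C^*_r(G_n(X)) \cong M_n(C_0(X))$ as well.

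There is essentially no obstacle here: the statement is a direct corollary of the two cited results, and the only thing to be careful about is bookkeeping — confirming that $X$ really does fit into the framework as a groupoid with Haar system (it does, trivially) and that the tensor-product identifications are the standard ones. If one wanted an even more hands-on proof avoiding Lemma \ref{producttotensor}, one could instead observe directly that $C_c(G_n(X))$ consists of $n\times n$ matrices of functions in $C_c(X)$ with entrywise-in-$X$, matrix-multiplication-style convolution, exactly reproducing $M_n(C_c(X))$, and then pass to completions; but invoking Lemma \ref{producttotensor} and Example \ref{matrixgroupoid} as the excerpt suggests is cleaner.
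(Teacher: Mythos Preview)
Your proposal is correct and is exactly the argument the paper has in mind: the paper does not spell out a proof but simply states that the lemma ``follows easily from Lemma \ref{producttotensor} and Example \ref{matrixgroupoid},'' which is precisely your construction $G_n(X)=G_n\times X$ together with the tensor product identification.
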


For a natural number $n$, let $Z_n$ denote the $C^*$-subalgebra of $C([0,1],M_n)$ of functions $f$ such that $f(0) = I$.

\begin{figure}
    \centering
    \includegraphics[width=12cm]{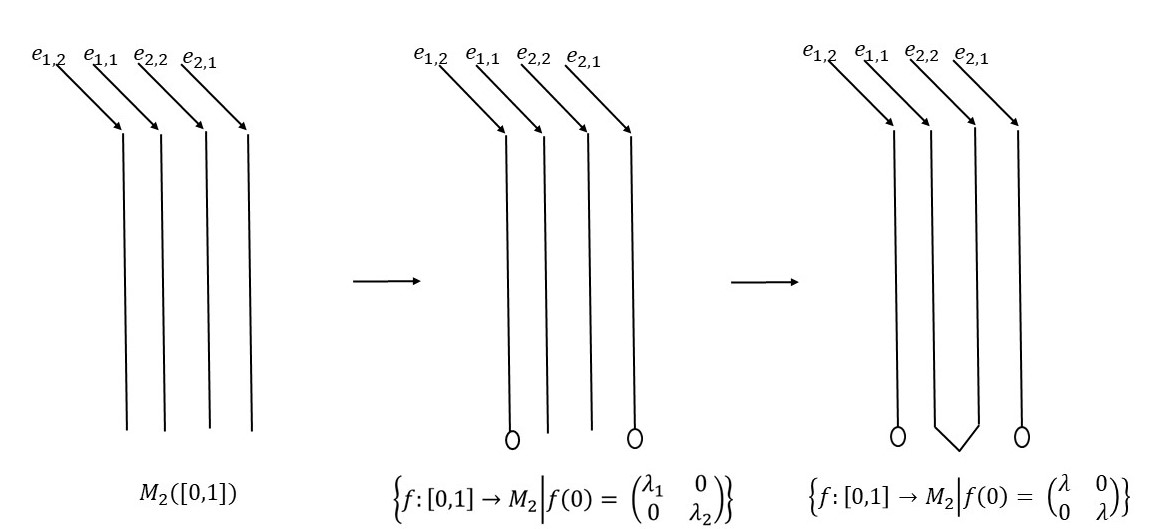}
    \caption{}
\end{figure}

\begin{Lemma}\label{G_{Z_n}}
For each natural number $n$, there exists a groupoid $G_{Z_n}$ whose groupoid $C^*$-algebra is $Z_n$.
\end{Lemma}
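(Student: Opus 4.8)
The plan is to realize $Z_n$ as a partial quotient of the groupoid $G_n([0,1])$ from the preceding Lemma, i.e. as a groupoid-level implementation of the boundary condition $f(0)=I$. The $C^*$-algebra $M_n(C_0((0,1]))$ is the groupoid $C^*$-algebra of $G_n((0,1]) = \{1,\dots,n\}\times\{1,\dots,n\}\times (0,1]$, and functions in $Z_n$ that vanish at $0$ are exactly this ideal; the quotient $Z_n / M_n(C_0((0,1])) \cong \CC$ records the single scalar value $f(0)$. So the groupoid model $G_{Z_n}$ should be obtained by gluing, over the endpoint $0$, a copy of the trivial groupoid (the one-point space, whose $C^*$-algebra is $\CC$) in place of the full matrix groupoid $G_n$. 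Concretely, take $G_n((0,1])$ together with a single extra object $\ast$ (sitting ``at $0$'') with no nontrivial arrows, topologized so that a sequence of objects $(i,i,t)$ with $t\to 0$ converges to $\ast$ for every $i$, and so that an arrow $(i,j,t)$ with $t\to 0$ also converges to $\ast$. Equivalently, $G_{Z_n}$ is the quotient of the Haar subgroupoid $\{1,\dots,n\}^2\times(0,1] \ \sqcup\ \{1,\dots,n\}^2\times\{0\}$ of $G_n([0,1])$ obtained by collapsing the whole fiber $\{1,\dots,n\}^2\times\{0\}$ over $t=0$ to a point.

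First I would describe this quotient map $q$ explicitly: on $\{1,\dots,n\}^2\times(0,1]$ it is the identity, and it sends all of $\{1,\dots,n\}^2\times\{0\}$ to the single object $\ast$. I would check $q$ is a continuous open surjection of groupoids (openness is clear since the only nontrivial identification happens on the closed fiber over $0$, and saturations of open sets stay open). Next I would verify the hypothesis of the Quotient Criterion, Proposition \ref{Haarquotient}: the Haar system on the subgroupoid is counting measure on $\{1,\dots,n\}^2\times\{t\}$ scaled appropriately — actually one wants $\mu^{(i,t)}$ to be counting measure on $\{(i,j,t): j\}$ for $t>0$, and over $t=0$ one needs the measures at the $n$ preimage objects of $\ast$ to push forward to the same measure on $G_{Z_n}^{\ast}$. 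Since $G_{Z_n}^{\ast}=\{\ast\}$, the pushforward of counting measure on an $n$-point fiber would be $n\delta_\ast$; this is consistent across all $n$ preimages, so the Quotient Criterion applies and $G_{Z_n}$ inherits a Haar system making $q$ Haar system preserving. (One must be slightly careful to normalize so that the $t>0$ fibers also have mass matching continuity at $0$; scaling the counting measures by $1/n$ throughout fixes this, and rescaling a Haar system does not change the $C^*$-algebra up to isomorphism.)

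Then I would identify $C^*(G_{Z_n})$ with $Z_n$. Using the local compactness and the open cover by $G_n((0,1])$ (open subgroupoid) and a neighborhood of $\ast$, a function in $C_c(G_{Z_n})$ is a continuous matrix-valued function on $(0,1]$ together with a scalar ``value at $\ast$'' into which it limits — precisely the condition that $\lim_{t\to 0} f(i,j,t)$ exists, is independent of $i,j$, and equals $c\,\delta_{ij}$ for a scalar $c$, since the arrows $(i,j,t)$ with $i\neq j$ must limit to a point in the isotropy-free part at $\ast$, forcing the off-diagonal limits to vanish and the diagonal limits to agree. That is exactly the defining condition $f(0)=c\cdot I$ of $Z_n$. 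Thus the convolution $*$-algebra $C_c(G_{Z_n})$ is (densely) $Z_n$, and since $G_{Z_n}$ is an étale principal groupoid one has $C^*(G_{Z_n})=C^*_r(G_{Z_n})$; a standard argument (or invoking Proposition \ref{producttotensor}-style bookkeeping together with the ideal/quotient decomposition above) identifies the completion with $Z_n$ itself.

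The main obstacle I anticipate is the topology near the glued point $\ast$ and the verification that it genuinely produces a locally compact Hausdorff étale groupoid with the stated Haar system — in particular, ensuring that collapsing the fiber over $0$ does not destroy the local homeomorphism property of the range map and that the measures glue continuously (condition (2) in the definition of a Haar system) at $\ast$. Everything else — that $q$ is a partial quotient, that the Quotient Criterion applies, and that the convolution algebra is $Z_n$ — is then routine, and the desired groupoid $G_{Z_n}$ is $q\big(\{1,\dots,n\}^2\times(0,1]\ \sqcup\ \{1,\dots,n\}^2\times\{0\}\big)$ with its induced Haar system. $\edokaz$
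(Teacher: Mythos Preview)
Your construction has a genuine error at the step where you identify the $C^*$-algebra. You collapse the \emph{entire} fiber $\{1,\dots,n\}^2\times\{0\}$---including the off-diagonal arrows $(i,j,0)$ with $i\neq j$---to the single object $\ast$, and you explicitly topologize so that $(i,j,t)\to\ast$ for \emph{all} $i,j$. Continuity of $f\in C_c(G_{Z_n})$ at $\ast$ then forces $\lim_{t\to 0}f(i,j,t)=f(\ast)$ for every pair $(i,j)$, so the associated matrix-valued function satisfies $F(0)=f(\ast)\cdot J$ where $J$ is the all-ones matrix, \emph{not} $f(\ast)\cdot I$. Your sentence ``the arrows $(i,j,t)$ with $i\neq j$ must limit to a point in the isotropy-free part at $\ast$, forcing the off-diagonal limits to vanish'' is exactly where the argument breaks: there is no isotropy-free part at $\ast$, since $\ast$ is a single identity arrow and every $(i,j,t)$ converges to it by your own stipulation. (Indeed your claim that the limit is both ``independent of $i,j$'' and ``equals $c\,\delta_{ij}$'' is self-contradictory unless $c=0$.) The resulting algebra $\{F\in C([0,1],M_n):F(0)\in\CC J\}$ is not even unital---the constant function $I$ fails the boundary condition---so it cannot be $Z_n$. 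The obstacle you flagged is also real: the range map is $n$-to-$1$ on every punctured neighborhood of $\ast$, so your quotient is not \'etale.

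The fix, which is what the paper does, is a two-step subquotient rather than a single collapse. First pass to the open subgroupoid $G_{Z_n}'=G_n\times[0,1]\setminus\{(g,0):g\in G_n^{(1)}\setminus G_n^{(0)}\}$, i.e.\ \emph{delete} the off-diagonal arrows over $t=0$; this is \'etale and its $C^*$-algebra is $\{F:F(0)\text{ diagonal}\}$. Then quotient by identifying only the remaining $n$ objects $(i,i,0)$ to a single point. Now an off-diagonal sequence $(i,j,t)$ with $i\neq j$ has no limit as $t\to 0$, so compact support forces $f(i,j,t)\to 0$, while the diagonal sequences all converge to $\ast$, giving exactly $F(0)=cI$. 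The range map is a local homeomorphism near $\ast$ because a basic neighborhood of $\ast$ contains only identity arrows, and the Quotient Criterion applies to the $n$ object preimages without any rescaling.
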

\begin{proof}
Let $G_{Z_n}' = G_n\times [0,1] \setminus \{(g,0): g\in \G1\setminus \G0\}$. It is clear that $G_{Z_n}'$ is a proper \'etale subgroupoid of $G_n\times [0,1]$ as it is $r$-discrete and the range map is clearly open since the range map for $G_n\times [0,1]$ is open. Notice that the inclusion of $G_{Z_n}'\hookrightarrow G_n\times [0,1]$ is the inclusion of all the  functions $f:[0,1]\to M_n$ where 

\begin{center}
$f(0) = 
\begin{pmatrix} 
\lambda_1 & 0 & 0 & \hdots & 0 \\
0 & \lambda_2 & 0 & \hdots & 0 \\
0 & 0 & \lambda_3 & \hdots & 0 \\
\vdots & \vdots & \vdots & \ddots & 0\\
0 & 0& 0 & \hdots & \lambda_n
\end{pmatrix}
$     
\end{center}

Let $G_{Z_n} = G_{Z_n}'/\sim$ where $(g,0)\sim (h,0)$ for all $g,h\in \G0$. Notice that the quotient map  $q:G_{Z_n}'\to G_{Z_n}$ is Haar system preserving and that the image of the induced map $q^*$ is exactly all the functions $f:[0,1]\to M_n$ where $f(0) = \lambda I$ where $I$ denotes the identity of $M_n$ (see Figure 1 for a geometric visualization when $n=2$).
It follows that the groupoid $C^*$-algebra of $G_{Z_n}$ is $Z_n$ and the induced map of the parital map $q$ is the inclusion of those functions $f\interval \to M_n$ such that $f(0) = \lambda I$ for some $\lambda\in \CC$.
\end{proof}

\begin{ignore}
{
Recall the definition of supernatural numbers as formal infinite products of powers ($0$, natural numbers or $\infty$) of primes or $1$'s, and note the  definition of the dimension drop algebras $Z_{m,n}$ (see \cite{JS,RW}).
}
\end{ignore}

\subsection{Dimension Drop Groupoids}

\begin{Definition}
If $m$ and $n$ are  natural  numbers, define the \textbf{dimension drop algebra} $Z_{m,n}$ by 
\begin{equation*}
    Z_{m,n}=\{f \in C([0,1], M_m \otimes M_n) : f(0) \in M_m \otimes \mathbb{C}, f(1) \in \mathbb{C} \otimes M_n \}
\end{equation*}

\end{Definition}

\begin{Lemma}\label{dimensiondropmodel}
For each pair of  natural numbers $m$ and $n$, there exists a groupoid, $G_{m,n}$ whose groupoid $C^*$-algebra is $Z_{m,n}$. 
\end{Lemma}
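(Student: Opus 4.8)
The plan is to realize $Z_{m,n}$ as a partial quotient of a groupoid built from the product of a matrix groupoid with the interval, imitating the construction of $G_{Z_n}$ in Lemma \ref{G_{Z_n}} but now collapsing fibers at \emph{both} endpoints. Begin with the \'etale groupoid $G_m \times G_n \times [0,1]$, which by Lemma \ref{producttotensor} and Example \ref{matrixgroupoid} has groupoid $C^*$-algebra $C([0,1], M_m \otimes M_n)$. First I would pass to the open subgroupoid
$$G' = \bigl(G_m \times G_n \times [0,1]\bigr) \setminus \bigl(\{(g,h,0) : h \notin G_n^{(0)}\} \cup \{(g,h,1) : g \notin G_m^{(0)}\}\bigr),$$
which is again $r$-discrete with open range map, hence \'etale, and whose inclusion into $G_m \times G_n \times [0,1]$ corresponds to restricting to those $f$ with $f(0)$ diagonal in the $M_n$-coordinate and $f(1)$ diagonal in the $M_m$-coordinate.

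Next I would form the partial quotient $G_{m,n} = G'/\!\sim$ where at $t=0$ we identify $(g,h,0) \sim (g',h,0)$ whenever $g,g' \in G_m^{(0)}$ (collapsing the $M_m$-object fiber to a point, leaving the $G_n$-coordinate free) and symmetrically at $t=1$ we identify $(g,h,1) \sim (g,h',1)$ whenever $h,h' \in G_n^{(0)}$. One checks that this quotient map is open and satisfies the hypothesis of the Quotient Criterion (Proposition \ref{Haarquotient}): the relevant integrals of pullbacks of functions agree across identified units because, over the collapsed endpoint, the counting measure on the $G_m$-fiber (resp. $G_n$-fiber) has been replaced uniformly. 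Thus $G_{m,n}$ carries a natural Haar system making $q: G' \to G_{m,n}$ Haar system preserving, so by Propositions \ref{inducedmorphism} and \ref{reduced} the induced map $q^*$ is an isometric $*$-embedding. Identifying the image of $q^* \circ (\text{inclusion } G' \hookrightarrow G_m\times G_n\times[0,1])^*$, one sees that the functions $f \in C([0,1], M_m\otimes M_n)$ in the range are precisely those with $f(0) = a \otimes \lambda \cdot I_n$ for $a \in M_m$, $\lambda \in \CC$ — equivalently $f(0) \in M_m \otimes \CC$ — and $f(1) \in \CC \otimes M_n$, which is exactly $Z_{m,n}$.

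I expect the main obstacle to be the verification that the two-sided collapse still yields a genuine topological groupoid with a continuous Haar system — in particular checking that the quotient topology near $t=0$ and $t=1$ is locally compact Hausdorff and that continuity condition (2) in the definition of a Haar system holds across the endpoints where fibers change cardinality. The single-endpoint case is handled in Lemma \ref{G_{Z_n}}, and the two-endpoint case should follow by applying that argument locally near each endpoint separately (the two collapses happen at disjoint closed sets $\{t=0\}$ and $\{t=1\}$), but the bookkeeping of which coordinate is collapsed at which end, and confirming that the resulting $C^*$-algebra's fiber at $0$ is $M_m \otimes \CC$ rather than $\CC \otimes M_m \otimes$ something, is where care is needed. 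A figure analogous to Figure 1, showing the groupoid pinched at both ends, would make the construction transparent.
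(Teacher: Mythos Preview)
Your approach is essentially identical to the paper's: pass to the open subgroupoid $G'$ of $G_m \times G_n \times [0,1]$ by deleting the appropriate non-unit arrows over each endpoint, then form a Haar-system-preserving quotient collapsing the remaining object-space factor at each end. The one concrete slip is that you have swapped which factor is collapsed at which endpoint. At $t=0$, after deleting $\{(g,h,0): h \notin G_n^{(0)}\}$, the fiber is $G_m \times G_n^{(0)}$, i.e.\ $n$ disjoint copies of $G_m$; to obtain fiber $G_m$ (and hence $f(0) \in M_m \otimes \CC$) you must collapse the $G_n^{(0)}$ coordinate, identifying $(g,h,0) \sim (g,h',0)$ for all $h,h' \in G_n^{(0)}$ and every $g \in G_m$. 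Your identification $(g,h,0) \sim (g',h,0)$ for $g,g' \in G_m^{(0)}$ instead glues the \emph{units} of the $G_m$ factor while leaving its non-unit arrows untouched; this is not a groupoid congruence (sources and ranges of distinct $G_m$-arrows coalesce while the arrows themselves do not, so composition becomes ill-defined) and in any case would not produce $M_m$ at $t=0$. The symmetric swap occurs at $t=1$. This is exactly the bookkeeping hazard you anticipated in your final paragraph; once corrected, your argument is the paper's.
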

\begin{proof}
The construction of $G_{m,n}$ is almost identical to the construction of $G_{Z_n}$ as in the proof of Lemma \ref{G_{Z_n}}.

Let $G_{m,n}'$ be the open subspace of the groupoid $G_m \times G_n \times [0,1]$ taken by deleting the points $\{(h,g,0): g\in \G1_n\setminus \G0_n\}$ and $\{(h,g,1): h\in \G1_m\setminus \G0_m\}$. It is clear that the groupoid $C^*$-algebra of $G_{m,n}'$ embeds into the groupoid $C^*$-algebra of $G_m \times G_n \times [0,1]$ (equal to $C(\interval,M_m\otimes M_n)$) as all the functions $f:\interval\to M_m\otimes M_n$ which satisfy $f(0) = D_0\otimes M_0$ and $f(1) = M_1\otimes D_1$ where $D_i$ are diagonal matrices and $M_i$ are any matrices.
 
Now, let $q:G_{m,n}'\to G_{m,n}$ be the quotient under the equivalence relation $(g,h,1)\sim (g,h',1)$ and $(g,h,0)\sim (g',h,0)$. Once can check that the image of the induced map $q^*$ in $C^*(G_m \times G_n \times [0,1]) = C(\interval,M_m\otimes M_n)$ is exactly the dimension drop algebra $Z_{m,n}$.
\end{proof}

\subsection{Twisting Intervals of Matrix Groupoids by Paths of Unitaries}\label{addthetwist}

We will be interested in taking the groupoid $G_n(\interval)$ (and its subgroupoids and quotients thereof) and considering a twist by a path $u_t$ of unitaries in $M_{n}$. The issue is that we cannot do the twisting in $G_n(\interval)$. We need a larger ambient groupoid to make the twist. Let $\{e_{i,j}: 1\leq i,j\leq n\}$ be the usual orthonormal basis for $M_n$. For each unitary $U\in U_n$, notice that $G_n^U := \{U^*e_{i,j}U: 1\leq i,j\leq n\}$ carries a natural \'etale groupoid structure and is isomorphic to $G_n$. Let $V_n$ be the isomorphic (as topological groupoids) copy of $U_n\times G_n$ given by $\{(U,x): x\in G_n^U \text{ and } U\in U_n\}$. Recall that the source and range maps for a groupoid  $G$ to its object space $\G0$ are denoted by $s$ and $r$, respectively. We define $s(U,x) = (U,s(x))$ and similarly for $r$. We will call $V_n$ the \textbf{groupoid Steifel variety} as it is a groupoid indexed over orthornmal frames of $\CC^n$. Observe that $G_n$ embeds into $V_n$ as the set of tuples $\{(\text{id}_n,e_{i,j}): 1\leq i,j \leq n\}$.

Notice that a unitary $U\in U_n$ defines a groupoid isomorphism $\hat{U}$ between the subgroupoids  $\{(V,x): x\in G^V_n\}$ and $\{(UV,x): x\in G_n^{UV}\}$ defined by $(V,x) \to (UV, U^*xU)$. $\hat{U}$ is clearly Haar system preserving and induces a groupoid $C^*$-algebra isomorphism. 

 Notice that $G_n\times \interval$ is an \'etale subgroupoid of the product groupoid $V_n\times \interval$. Notice also that any path of unitaries $u_t$ define a topological groupoid isomorphism of $G_n\times \interval$ via the function $\widehat{u_t}$ defined by  $\widehat{u_t}((U,x)) = (Uu_t, u_t^*xu_t)$ and hence the image of $G_n\times \interval$, which we will denote by $G^u_n(\interval)$, under $\widehat{u_t}$ is also a second countable locally compact and Hausdorff \'etale groupoid and is the collection given by $\{(u_t,u_te_{i,j}u_t^*,t):t\in \interval \text{ and } 1\leq i,j \leq n\}$.  Notice then that each of the groupoids $G^u_n(\interval)$ (one for each such path $u_t$)  are isomorphic by an isomorphism that is a homeomorphism and preserves Haar systems. We will refer to groupoids $G^u_n(\interval)$ as \textbf{paths of the groupoids $M_n$}. 
 
 \begin{ignore}
{
 We will not need to work with anything more general than paths of groupoids, but it is worth noticing that, for any locally compact Hausdorff space $X$ and any proper continuous function $u:X\to U_n$, one can define $G_n^u(X)$ as the subspace of the product $G_n\times X$ given by $\{(u_x,u_xe_{i,j}u_x^*,x):x\in X \text{ and } 1\leq i,j \leq n\}$. We will call $G^u_n(X)$ a \textbf{u-twisted model for $C_0(X,M_n)$}.

What we will be most interested in are twists of the groupoids $G_{m,n}$ and $G(n,n')$ by paths of unitaries. Naturally, one must be careful because these groupoids are quotients of open subgroupoids of $G_k(\interval)$ for an appropriate choice of $k\ge 1$. Let's start with explaining twists of $G_{m,n}$. Let $u_t$ be a path of unitaries in $M_{mn}$ as above. $G_{m,n}$ is the quotient of the open subgroupoid $G_{m,n}'$ of $G_m\times G_n \times \interval$ given by removing the points $\{(g,h,0): g\in G\setminus \G0\}\cup \{(g,h,1): h\in H\setminus H^{(0)}\}$. Let $G_{m,n}''$ be the image of $G_{m,n}'$ in $G^u_{mn}(\interval)$ under the twisting of $G_n\times G_m\times \interval$ by the path of unitaries $u_t$. From there, we make identifications depending on the unitaries $u_0$ and $u_1$. We define $\sim$ on $G_{m,n}'$ to be the image of the equivalence relation on $G_{m,n}(\interval)$ and define $G_{m,n}^u$ to be the resulting quotient. It is straightforward to check that the groupoid $C^*$-algebra of $G_{m,n}^u$ is isomorphic to the dimension drop algebras $Z_{m,n}$. Performing the same procedure, we can construct groupoids $G^u(n,n')$. 
}
\end{ignore}
\subsection{Standard Subalgebras}\label{generallyspeaking} Recall (by Lemma 3.10 of \cite{KV} for instance) that every unital subalgebra of $M_n$ is conjugate (by some invertible matrix) to an algebra of the form $\oplus_{i=1}^k \text{id}_{a_i}\otimes M_{b_i}$ where $\sum_{i=1}^k a_ib_i = n.$ Using projections, one can easily show that in fact every subalgebra with a unit (perhaps not equal to the unit on $M_n$) is of the form $\left(\oplus_{i=1}^k \text{id}_{a_i}\otimes M_{b_i}\right) + 0_m$ where $\sum_{i=1}^k a_ib_i = n - m$ (where $0_m$ denotes the $0$ matrix on $\CC^m$).  We will call subalgebras of that form to be \textbf{standard subalgebras}.


\begin{Proposition}\label{cutandpaste}
Let $n\ge 1$, $u:\interval\to U_n$ a proper continuous map, and $F\subset \interval$ be a finite subset and let $A_x \subset M_n$ be a standard $C^*$-subalgebra for every $x\in F$. There exists a groupoid $G(A_x:x\in F)$ which is a subquotient of $G^u_n(\interval)$ (defined in Subsection \ref{addthetwist})  whose $C^*$-algebra is isomorphic to the $C^*$-subalgebra $C$ of $u_tC(\interval,M_n)u_t^*$ (at least, the isomorphic copy under the induced map) which consists of those functions $f$ such that $f(x)\in A_x$. 

Moreover, if $B_x\subset A_x$ and $B_x$ is a standard  subalgebra for each $x\in F$, then the groupoid model $G(B_x:x\in F)$ is a subquotient of $G(A_x:x\in F)$.  
\end{Proposition}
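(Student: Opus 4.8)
\textbf{Proof plan for Proposition \ref{cutandpaste}.}

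The plan is to mimic, in a more flexible and systematic way, the explicit constructions carried out for $G_{Z_n}$ in Lemma \ref{G_{Z_n}} and for $G_{m,n}$ in Lemma \ref{dimensiondropmodel}. The key observation is that for a standard subalgebra $A = \left(\oplus_{i=1}^k \operatorname{id}_{a_i}\otimes M_{b_i}\right)+0_m \subset M_n$, the matrix units not belonging to $A$ — together with the identification forced by the $\operatorname{id}_{a_i}$ tensor factors — are exactly the combinatorial data of a Haar subgroupoid of $G_n$ followed by a partial quotient. Concretely: discarding rows/columns in the $0_m$ block corresponds to passing to an open (hence Haar) subgroupoid of $G_n$, and the $\operatorname{id}_{a_i}\otimes M_{b_i}$ structure corresponds to a quotient of that open subgroupoid identifying the $a_i$ ``parallel copies'' of each $M_{b_i}$; one checks directly (or via Proposition \ref{Haarquotient}, the Quotient Criterion) that this quotient is a partial quotient of $G_n$ whose induced map has image exactly $A$. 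So I would first isolate this purely finite-dimensional lemma: \emph{every standard subalgebra $A\subset M_n$ is the image of the induced map of a partial morphism $G_n \to G_A$ where $G_A$ is a subquotient of $G_n$, and if $B\subset A$ is a further standard subalgebra then $G_B$ is a subquotient of $G_A$ in a compatible way} (the last clause is the nesting statement restricted to a single point, which is again a routine combinatorial check on matrix units: a standard subalgebra of a standard subalgebra is obtained by further deleting rows/columns and further identifying parallel copies).

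Next I would globalize over the interval. Start with the twisted groupoid $G^u_n(\interval)$ from Subsection \ref{addthetwist}, whose $C^*$-algebra is the isomorphic copy of $C(\interval,M_n)$ sitting inside via conjugation by $u_t$. For the finite set $F=\{x_1,\dots,x_\ell\}\subset\interval$, I would delete from $G^u_n(\interval)$, over each point $x_j$, precisely the arrows in the $0_m$-block of $A_{x_j}$ and the off-diagonal arrows internal to the $M_{b_i}$-blocks of $A_{x_j}$ in exactly the way the proofs of Lemma \ref{G_{Z_n}} and Lemma \ref{dimensiondropmodel} delete $\{(g,0):g\in G^{(1)}\setminus G^{(0)}\}$; this produces an open, $r$-discrete, hence \'etale subgroupoid $G'$ of $G^u_n(\interval)$ (openness because we only remove arrows over the closed set $F$, and only those that are not units, so the range map stays open — same argument as in the two model lemmas). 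Then I would form $G(A_x:x\in F) := G'/\!\sim$ where $\sim$ performs, over each $x_j$, the identification of parallel copies dictated by the $\operatorname{id}_{a_i}$ factors of $A_{x_j}$ (and does nothing away from $F$). The Quotient Criterion, Proposition \ref{Haarquotient}, applies because the identifications are only over finitely many points and the relevant integrals of $C_0$-functions pulled back through the quotient agree (this is the same verification as in Lemma \ref{dimensiondropmodel}); so $G(A_x:x\in F)$ is a partial quotient of $G^u_n(\interval)$, i.e. a subquotient. Chasing the induced map $q^*$ through the composition $C^*(G^u_n(\interval))\to C^*(G')\to$ (image under quotient) and using the pointwise computation of the finite-dimensional lemma fibrewise, one identifies the image with exactly $\{f\in u_tC(\interval,M_n)u_t^* : f(x)\in A_x \ \forall x\in F\}=C$.

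Finally, for the nesting statement, suppose $B_x\subset A_x$ is standard for each $x\in F$. Apply the same construction to the $B_x$ to get $G(B_x:x\in F) = G''/\!\approx$ with $G''\subset G^u_n(\interval)$ open. Since $B_x\subset A_x$, the finite-dimensional lemma gives, fibrewise over each $x_j$, that $G''$ over $x_j$ is an open subgroupoid of $G'$ over $x_j$ and that $\approx$ refines $\sim$ over $x_j$ (a further deletion of arrows plus a further identification of parallel copies); away from $F$ both groupoids coincide with $G^u_n(\interval)$. Hence $G''\subset G'$ is open, giving a partial morphism $G' \to G''$ (inclusion as a Haar subgroupoid, made contravariant as in the Remark after Definition \ref{partialmorphisms}), and the identification $\approx$ descends to a partial quotient $G(A_x:x\in F)\dashrightarrow G(B_x:x\in F)$; composing exhibits $G(B_x:x\in F)$ as a subquotient of $G(A_x:x\in F)$, with the induced map on $C^*$-algebras being the inclusion $\{f:f(x)\in B_x\}\hookrightarrow\{f:f(x)\in A_x\}$ as required.

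\textbf{Expected main obstacle.} The genuinely substantive point — everything else being bookkeeping modeled on Lemmas \ref{G_{Z_n}} and \ref{dimensiondropmodel} — is the finite-dimensional lemma: getting the combinatorics of which matrix units to delete and which ``parallel copies'' to glue exactly right so that (a) the result is honestly an open subgroupoid (range map stays open) followed by a legitimate partial quotient satisfying the hypothesis of Proposition \ref{Haarquotient}, and (b) the image of the induced map is \emph{precisely} $A$ and not something larger or smaller, simultaneously with the compatible nesting $B\subset A \Rightarrow G_B$ a subquotient of $G_A$. Packaging this cleanly, and making sure the constant $m$ (the $0_m$-block) and the multiplicities $a_i,b_i$ are allowed to vary from point to point of $F$ without the openness of $G'$ breaking, is where the care is needed.
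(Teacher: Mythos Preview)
Your approach is essentially identical to the paper's: isolate the finite-dimensional subquotient $G_n \rightsquigarrow G_A$ for a standard subalgebra $A$ (delete arrows corresponding to zero entries of $\oplus_i \oplus_{j=1}^{a_i} M_{b_i}$, then quotient by identifying the $a_i$ parallel copies via Proposition \ref{Haarquotient}), and then perform this fibrewise over each $x\in F$; the paper likewise leaves the ``moreover'' nesting to the reader. One small wording slip in your globalization paragraph: you want to delete the arrows \emph{outside} the $a_i$ parallel $M_{b_i}$-blocks (the off-block-diagonal entries of $\oplus_i\oplus_{j} M_{b_i}$), not the off-diagonal arrows \emph{internal} to each $M_{b_i}$.
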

\begin{proof}
Assume that the path $u_t$ is the constant path equal to the identity for all $t$. The general case is a minor adaptation of this case.

Let $G_n$ be defined as in Example \ref{matrixgroupoid}. We will outline how to construct a groupoid $H$ which is a quotient of an open Haar subgroupoid of $G_n$ (as defined in the proof of Lemma \ref{matrixgroupoid}) such that the embedded image of $C^*(H)$ in $C^*(G)$ under the induced map is exactly of the form $\oplus_{i=1}^k \text{id}_{a_i}\otimes M_{b_i}$. Notice that the elements of $G_n$ correspond to matrix entries by the identification of the characteristic function $\chi_{(i,j)}$. Notice that the subalgebra  $\oplus_{i=1}^k \left(\oplus_{j=1}^{a_i}M_{b_i}\right)$ of $M_n$ corresponds uniquely to an open subgroupoid $H'$ of $G_n$ simply by removing the points of $G_n$ that correspond to matrix entries that are $0$. If we label the subset of arrows that corresponds to the subalgebra $\oplus_{j=1}^{a_i}M_{b_i}$ by $\{e_{i,j}^k: 1\leq i,j \leq b_i \text{ and } 1\leq k \leq a_i\}$ and, for each $i$ make the identifications $e_{i,j}^{k_1} \sim e_{i,j}^{k_2}$ for all $1\leq k_1,k_2\leq a_i$ then it is easy to see that the quotient map $q: H' \to H := H'\sim$ satisfies the hypothesis of Proposition \ref{Haarquotient} and hence we can give the quotient $H$ a Haar system of measures that makes the quotient map Haar system preserving (in this case, it is just counting measure over each fiber). It is a straightforward check that the  embedded image of the induced map from $G_n$ to $H$ is the inclusion of the subalgebra $\oplus_{i=1}^k \text{id}_{a_i}\otimes M_{b_i}$.

Perform this procedure for $(x,G_n)$ for each point $x\in F$ and notice that the induced map of the resulting subquotient will be exactly the inclusion of the subalgebra $C$. We leave the simple proof of the moreover assertion to the reader.
\end{proof}

\section{Groupoid Model for the Jiang-Su Algebra and  the Razak-Jacelon Algebra}\label{mainsection}

\begin{Definition}
[Jiang-Su \cite{JS}]\label{Defofjiangsu}
The \textbf{Jiang-Su algebra}, denoted by $\mathcal{Z}$, is the inductive limit of any sequence $A_1 \xrightarrow{\phi_1} A_2 \xrightarrow{\phi_2} A_3 \xrightarrow{\phi_3}  \hdots $, where $A_n = Z_{p_n,q_n}$ is a prime dimension drop algebra such that the connecting morphisms $\phi_{m,n} = \phi_{n-1} \circ \hdots \circ \phi_{m+1} \circ \phi_m:A_m\to A_n $ is an injective morphism of the form 

\begin{center}
$   \phi_{m,n}(f) = 
u^*\begin{pmatrix} 
f\circ \xi_1 & 0 & 0 & \hdots & 0 \\
0 & f\circ \xi_2 & 0 & \hdots & 0 \\
0 & 0 & f\circ \xi_3 & \hdots & 0 \\
\vdots & \vdots & \vdots & \ddots & 0\\
0 & 0& 0 & \hdots & f \circ \xi_k
\end{pmatrix}u
$
\end{center}
where $f\in A_m$, $u$ is a continuous path of unitaries in $U_{M_{p_nq_n}}$, $\xi_i$ is a sequence of continuous paths in $\interval$, each satisfying 

$$|\xi_i(x) -\xi_i(y)| \leq \frac{1}{2^{n-m}} \hspace{1cm} \forall x,y\in \interval $$

and 
$$\bigcup_{i=1}^k \xi_i(\interval) = \interval$$
\end{Definition}

\begin{Definition}\label{generaldimensiondropmodels}
For every pair of natural numbers $p$ and $q$ and path $u_t:\interval \to U_n$, let $G^u_{p,q}$ denote the groupoid obtained using Proposition \ref{cutandpaste} which is a subquotient of $G_{pq}^u(\interval)$ whose $C^*$-algebra is $Z_{p,q}$. For the constant path $u_t:\interval \to \{\text{id}_n\}$, we denote the groupoid $G^u_{p,q}$ by $G_{p,q}$.
\end{Definition}

\begin{Theorem}\label{JiangSu}
 There exists a  generalized inverse sequence $\{H_i,\phi^i_j\}$ of groupoids whose inverse limit groupoid $G_\mathcal{Z}$ is a groupoid model for the Jiang-Su algebra. Moreover, $G_{\mathcal{Z}}$ is an \'etale equivalence relation on a compact metric space with covering dimension at most 1.
\end{Theorem}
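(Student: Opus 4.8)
\textbf{Proof plan for Theorem \ref{JiangSu}.}

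The plan is to build the inverse sequence $\{H_i, \phi^i_j\}$ of groupoids so that applying $\Gamma_{\text{min}}$ (equivalently $\Gamma_{\text{max}}$) recovers exactly the Jiang-Su inductive system $A_1 \xrightarrow{\phi_1} A_2 \xrightarrow{\phi_2} \cdots$ of Definition \ref{Defofjiangsu}, and then to invoke Theorem \ref{continuousfunctor} to conclude that $C^*(\varprojlim H_i) = \varinjlim A_i = \mathcal{Z}$. First I would set $H_i = G^{u^{(i)}}_{p_i,q_i}$ using Definition \ref{generaldimensiondropmodels}, where the path $u^{(i)}$ of unitaries will be chosen to match the unitary $u$ appearing in $\phi_{i}$; by Proposition \ref{cutandpaste} (together with Lemma \ref{dimensiondropmodel}) this is an \'etale groupoid which is a subquotient of $G^{u^{(i)}}_{p_iq_i}(\interval)$ and has $C^*$-algebra $Z_{p_i,q_i}$. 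The key step is to realize each connecting map $\phi_i : Z_{p_i,q_i} \to Z_{p_{i+1},q_{i+1}}$ as the induced map of a surjective partial morphism $\phi^{i+1}_i : H_{i+1} \to H_i$. The block-diagonal form of $\phi_i$ with entries $f \mapsto f\circ \xi_j$ conjugated by $u$ suggests the recipe: the open subgroupoid of $H_{i+1}$ on which $\phi^{i+1}_i$ is defined should consist of the arrows lying over the $k$ disjoint subintervals carrying the $k$ ``copies'', the map should collapse each of those copies onto $H_i$ by composing the interval coordinate with $\xi_j$ and projecting away the multiplicity factor, and the conjugation by $u$ and the cut-down to the standard subalgebras at the endpoints are absorbed into the ``twisting'' and ``cut-and-paste'' machinery of Subsections \ref{addthetwist} and \ref{generallyspeaking}. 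One must check that this partial morphism is Haar system preserving --- straightforward for \'etale groupoids by Remark \ref{specialcase}(3), since it amounts to the map being a bijection on each range fiber of its domain --- and that its induced pullback map is exactly $\phi_i$, which reduces to unwinding the definition of the pullback on characteristic functions and matching block entries.

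Next I would verify that $\{H_i, \phi^i_j\}$ genuinely forms an inverse system in $\mathcal{G}$ in the sense of Definition \ref{geninvsystem}: the composition condition $\phi^j_\ell \circ \phi^i_j = \phi^i_\ell$ follows because the corresponding $C^*$-algebra maps compose correctly (the assignment is functorial by Theorem \ref{functor}) and $\Gamma_{\text{min}}$ is faithful on these algebras, or more directly by tracking the $\xi$-compositions. Then Theorem \ref{mainthm.limitexists} gives the inverse limit $G_{\mathcal Z} := \varprojlim_i H_i$ in $\mathcal G$, Proposition \ref{equivrelation} shows it is an \'etale equivalence relation (each $H_i$ is a subquotient of a matrix-times-interval groupoid, hence principal and \'etale, and these properties pass to the inverse limit), and Theorem \ref{continuousfunctor} gives $C^*(G_{\mathcal Z}) = \varinjlim_i C^*(H_i) = \varinjlim_i Z_{p_i,q_i} = \mathcal Z$. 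For the object space: $G_{\mathcal Z}^{(0)} = \varprojlim_i H_i^{(0)}$, and each $H_i^{(0)}$ is a quotient of a finite disjoint union of copies of $\interval$, hence a compact metric space of covering dimension at most $1$; an inverse limit of such spaces again has covering dimension at most $1$ (dimension does not go up under inverse limits of compact metric spaces), and it is a compact metric space since $\interval$-based and the system is countable. This handles the ``moreover''.

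The main obstacle I anticipate is the precise construction of the partial morphism $\phi^{i+1}_i$ that induces the specific block form with the conjugating unitary $u$ and the endpoint conditions $f(0)\in M_p\otimes\CC$, $f(1)\in\CC\otimes M_q$ at \emph{both} levels. Getting the twisting path right is delicate: the domain groupoid $H_{i+1}$ already lives inside a twisted model $G^{u^{(i+1)}}_{p_{i+1}q_{i+1}}(\interval)$, and the map to $H_i$ must carry this twist compatibly onto the $k$-fold copy, so one needs $u^{(i+1)}$ to be built from $u^{(i)}$ (pulled back along the $\xi_j$'s) and the new conjugating unitary $u$ from $\phi_i$; verifying that the resulting object is still a genuine \'etale subgroupoid (local homeomorphism on the range map, Hausdorff, the endpoint identifications still well-defined after twisting) is where the care is needed. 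A secondary subtlety is checking that the open subgroupoid of $H_{i+1}$ where $\phi^{i+1}_i$ is defined, together with the quotient identifications from Proposition \ref{cutandpaste}, is chosen so that the induced map is \emph{injective} (matching the injectivity of $\phi_i$) and \emph{surjects} as a partial morphism (so that Definition \ref{geninvsystem}(2) holds and Theorem \ref{continuousfunctor} applies); this is essentially bookkeeping with the standard-subalgebra structure but must be done carefully so that $\bigcup_j \xi_j(\interval) = \interval$ translates into the map being defined on enough of $H_{i+1}$ to be surjective onto $H_i$.
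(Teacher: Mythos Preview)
Your proposal is correct and follows essentially the same approach as the paper: build $H_i$ as twisted dimension-drop groupoids $G^{u}_{p_i,q_i}$, realize each Jiang--Su bonding map as the induced map of a surjective partial morphism constructed via the cut-and-paste machinery of Section~\ref{basicsofmodeling}, then invoke Theorems~\ref{mainthm.limitexists} and~\ref{continuousfunctor} and Proposition~\ref{equivrelation}. One wording correction: the $k$ ``copies'' in the block-diagonal form are not arrows over $k$ disjoint \emph{subintervals} of $\interval$ but rather the $k$ diagonal matrix blocks inside $M_{p_{i+1}q_{i+1}}\cong M_k(M_{p_iq_i})$; the paper makes this explicit by building an auxiliary groupoid $K$ as a quotient of a \emph{disjoint union} of $k$ groupoids (each a stretched half or the constant $1/2$-level of $H_i$), mapping $K$ to $H_i$ by the obvious shrinking/squashing on each piece, and then using the Jiang--Su permutation-unitary path to identify $K$ with a partial quotient of $H_{i+1}$, so that $\phi^{i+1}_i$ is the composite of these three steps---exactly the mechanism you anticipate as the ``main obstacle.''
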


\begin{Remark}
In fact, the dual directed system of $C^*$-algebras is equal to the system constructed in the proof of Proposition 2.5 in \cite{JS}. 
\end{Remark}

\begin{proof}
The main idea, in view of Section \ref{maintool}, is to show that the bonding maps created  in the proof of Proposition 2.5 of \cite{JS} are induced maps of partial morphisms defined between certain groupoid models for dimension drop algebras. As soon as we have modeled the bonding maps as partial morphisms, one just applies \ref{mainthm.limitexists} to obtain a groupoid model for the Jiang-Su algebra.

Throughout the proof, we will use $V_{k}$ to denote the groupoid Steifel variety as defined in Section \ref{addthetwist} and we will use $G_k$ to denote the groupoid model of $M_k$ as defined in the proof of Example \ref{matrixgroupoid}. 

As usual, we will define $H_i$ inductively. Let $H_1 = G_{2,3}$ as defined in the proof of Lemma \ref{dimensiondropmodel} 
and suppose that $H_i$ and the bonding maps $\phi^i_j$ have been chosen for $j\leq i$  so that 
\begin{enumerate}
    \item $H_i$ is equal to $G^U_{p_{i},q_{i}}$ for some path  of unitaries $U_t$ in $M_{p_{i},q_{i}}$ (see Definition \ref{generalizedpullback}).
    \item the induced map of $\phi^i_j$, denoted by $\phi_{j,i}$, satisfies all the required properties as laid out in Definition \ref{Defofjiangsu}. 
\end{enumerate}

\underline{The construction of $H_{i+1}$ and $\phi^{i+1}_i$:} Let $k_0> 2q_i$ and $k_1> 2p_i$ be primes such that $p_ik_0$ and $q_ik_1$ are relatively prime and define $p_{i+1} = p_ik_0$ and $q_{i+1} = p_ik_1$. Let $k = k_0k_1$ and let $r_0$ be the integer such that  
$$0 < r_0 \leq q_{i+1} \hspace{.5cm} \text{ and } \hspace{.5cm} r_0 \equiv k (mod\hspace{.1cm} q_{i+1}). $$ 

As noted in the proof of Proposition 2.5 in \cite{JS}, we have that $r_0q_i$ and $k-r_0$ are both divisible $q_{i+1}$.

Similarly, define $r_1$ to be the unique integer such that
 
$$0 < r_1 \leq p_{i+1} \hspace{.5cm} \text{ and } \hspace{.5cm} r_1 \equiv k (mod \hspace{.1cm} p_{i+1}). $$ 

Again, we have  $k-r_1$ and $r_1p_i$ are divisible by $p_{i+1}$. As observed in \cite{JS} (at the bottom of page 370), we have that $k-r_1-r_0 >0$.

In order to construct $H_{i+1}$, we must first appeal to an auxiliary groupoid which we will denote by $K$. We construct $K$ as follows: 

Define embeddings $\iota_j:M_{p_iq_i}\hookrightarrow M_k(M_{p_iq_i})$ for $j=1,2, \hdots k_0k_1$ where $\iota_j$ is the defined by

\begin{center}
$  \iota_j(T)\hspace{.1cm} = \hspace{.1cm} 
\bordermatrix{~ & 1 & 2 & \hdots & j & \hdots & k  \cr
              1& 0 & 0 & \hdots & 0 & \hdots & 0 \cr
              2 & 0 & 0 & \hdots & 0 & \hdots & 0 \cr
              \vdots & \vdots  & \vdots & \vdots & \vdots & \vdots & \vdots \cr
              j & 0 & 0 & \hdots & T  & \hdots  &0  \cr
              \vdots & \vdots  & \vdots & \vdots & \vdots & \vdots & \vdots \cr
              k & 0  & 0 & 0 & 0 & 0 & 0 \cr}
$
\end{center}

Next define a path $W_t$ of unitaries in $M_{p_{i+1}q_{i+1}}$ as follows. Let $\eta_j$ be some element of the standard basis element of $\CC^{p_{i+1}q_{i+1}}$ and choose $n$ to be the unique integer such that $np_iq_i \le j < (n+1)p_iq_i$. We first define $k$ paths $\xi_i:\interval\to \interval$ by

\begin{align*} 
   \xi_i(t) =   \begin{cases} 
      t/2 & i \leq r_0  \text{ and } \\ 
      1/2 & r_0 < i <r_1 \text{ and } \\       
      (t+1)/2 & r_1 \leq i \leq k  \\
   \end{cases}
\end{align*}

and then we define a path of unitaries $W_t$ by $$W_t(\eta_j) = \iota_{n}(U_{\xi_n(t)})(\eta_j).$$

Let $K$ denote the subquotient of $G_{p_{i_1}q_{i+1}}^W(\interval)$ such that $C^*(K)$ consists of those functions $f:\interval \to \oplus_{l=1}^k M_{p_iq_i}$ such that 


\begin{center}
$  f(0) \hspace{.1cm} = \hspace{.1cm} 
\bordermatrix{~ & 1 & 2 & \hdots & r_0 & r_0+ 1& \hdots & k  \cr
              1&  \text{id}_{q_i} \otimes M & 0 & \hdots & 0 & 0 & \hdots & 0 \cr
              2 & 0 & \text{id}_{q_i}\otimes M & \hdots & 0 & 0&  \hdots & 0 \cr
              \vdots & \vdots  & \vdots & \vdots & \vdots & \vdots & \ddots & \vdots \cr
              r_0 & 0 & 0 & \hdots & \text{id}_{q_i}\otimes  M  & 0&\hdots  &0  \cr
              r_0+1 & 0 & 0 & \hdots & 0 & M' & \hdots  &0  \cr
              \vdots & \vdots  & \vdots & \vdots & \vdots & \vdots & \ddots & \vdots \cr
              k & 0  & 0 & 0 & 0 & 0 & 0 & M' \cr}
$
\end{center}

and 

\begin{center}
$  f(1) \hspace{.1cm} = \hspace{.1cm} 
\bordermatrix{~ & 1 & 2 & \hdots & k-(r_1 +1) & k-r_1& \hdots & k  \cr
              1& N' & 0 & \hdots & 0 & 0 & \hdots & 0 \cr
              2 & 0 & N' & \hdots & 0 & 0&  \hdots & 0 \cr
              \vdots & \vdots  & \vdots & \vdots & \vdots & \vdots & \ddots & \vdots \cr
              k-(r_1+1) & 0 & 0 & \hdots & N'  & 0&\hdots  &0  \cr
              k-r_1 & 0 & 0 & \hdots & 0 & \text{id}_{p_i}\otimes  N & \hdots  &0  \cr
              \vdots & \vdots  & \vdots & \vdots & \vdots & \vdots & \ddots & \vdots \cr
              k & 0  & 0 & 0 & 0 & 0 & 0 & \text{id}_{p_i} \otimes N \cr}
$
\end{center}
where $M \in M_{p_i}$, $N\in M_{q_i}$, and  $M',N' \in M_{p_iq_i}$. $K$ is the Haar system preserving quotient of the groupoid $K_0$ which is the disjoint union of $k$ groupoids where the first $r_0$ many are all of the form $L^1$, $k-r_0-r_1$  are of the form $L^2$, and $r_1$ many are of the form $L^3$ where we define $L^i$ as follows: $L^1$ is obtained by taking the closed subgroupoid of $H_i$ given by taking all coordinates with $0\leq t\leq 1/2$ and then by stretching the inverval coordinate by $2$; so that 0 is fixed and $1/2$ is pulled to 1.  $L^2$ is the groupoid $G^{U_{1/2}}_n \times \interval$ and $L^3$ is the closed subgroupoid of $H_i$ given by taking all coordinates with $1/2\leq t\leq 1$ and then by stretching the interval coordinate the function $2t-1$ (the inverse map of $(t+1)/2$) so that the 1 coordinate is fixed and the 1/2 coordinate is pulled to 0. In order to get $K$, we need to make identifications at the interval coordinate $t=0$ and $t=1$. Notice that the groupoid $C^*$-algebra of $K_0$ is the collection of functions $f:\interval \to \oplus_{l=1}^k M_{p_{i}q_{i}}$ such that

\begin{center}
$  f(0) \hspace{.1cm} = \hspace{.1cm} 
\bordermatrix{~ & 1 & 2 & \hdots & r_0 & r_0+ 1& \hdots & k  \cr
              1&  \text{id}_{q_i} \otimes M_1 & 0 & \hdots & 0 & 0 & \hdots & 0 \cr
              2 & 0 & \text{id}_{q_i}\otimes M_2 & \hdots & 0 & 0&  \hdots & 0 \cr
              \vdots & \vdots  & \vdots & \vdots & \vdots & \vdots & \ddots & \vdots \cr
              r_0 & 0 & 0 & \hdots & \text{id}_{q_i}\otimes  M_{r_0}  & 0&\hdots  &0  \cr
              r_0+1 & 0 & 0 & \hdots & 0 & M_1' & \hdots  &0  \cr
              \vdots & \vdots  & \vdots & \vdots & \vdots & \vdots & \ddots & \vdots \cr
              k & 0  & 0 & 0 & 0 & 0 & 0 & M_{k-r_0}' \cr}
$
\end{center}

and 

\begin{center}
$  f(1) \hspace{.1cm} = \hspace{.1cm} 
\bordermatrix{~ & 1 & 2 & \hdots & k-(r_1 +1) & k-r_1& \hdots & k  \cr
              1& N_1' & 0 & \hdots & 0 & 0 & \hdots & 0 \cr
              2 & 0 & N_2' & \hdots & 0 & 0&  \hdots & 0 \cr
              \vdots & \vdots  & \vdots & \vdots & \vdots & \vdots & \ddots & \vdots \cr
              k-(r_1+1) & 0 & 0 & \hdots & N_{k-(r_1+1)}'  & 0&\hdots  &0  \cr
              k-r_1 & 0 & 0 & \hdots & 0 & \text{id}_{p_i}\otimes  N_1 & \hdots  &0  \cr
              \vdots & \vdots  & \vdots & \vdots & \vdots & \vdots & \ddots & \vdots \cr
              k & 0  & 0 & 0 & 0 & 0 & 0 & \text{id}_{p_i} \otimes N_{k-r_1} \cr}
$
\end{center}
$K$ is obtained by taking the quotient by identifying the first $r_0$ many of the groupoid along their $0$-coordinates and identifying the last $k-r_0$ many groupoids along their $0$ coordinates in the obvious way. Similarly one has to identify the first $k-(r_1+1)$ groupoids along their $1$ coordinate and also identically identify the last $r_1$ many groupoids along their $1$ level.

We define $\psi:K\to H_i$ by mapping the subgroupoids $L^1$ to $H_i$ by shrinking their $\interval$ coordinate by $2$ and sending them in the obvious way to the subset of $H_i$ whose interval coordinates are between 0 and 1/2. The groupoids of the form $L^2$ are mapped to $H_i$ by first squashing their interval coordinate to the point 1/2 and then mapping them to $H_i$ by sending them to the 1/2 level in the obvious way. We similarly define the map from $L^3$ to $H_i$ by first applying the map $(t+1)/2$ to the interval coordinates (pulling 1/2 to 0 and leaving 1 fixed) and canonically mapping them over to the points in $H_i$ with interval coordinate $1/2\leq t\leq 1$. The reader can check that $\psi$ is Haar system preserving and, furthermore, that the induced map of the partial morphism $\psi:K\to H_i$ is given by

\begin{center}
$   f\longrightarrow
\begin{pmatrix} 
f\circ \xi_1 & 0 & 0 & \hdots & 0 \\
0 & f\circ \xi_2 & 0 & \hdots & 0 \\
0 & 0 & f\circ \xi_3 & \hdots & 0 \\
\vdots & \vdots & \vdots & \ddots & 0\\
0 & 0& 0 & \hdots & f \circ \xi_k
\end{pmatrix}
$
\end{center}

As noted in \cite{JS} in the proof, there exists a path of permutation unitaries $u_t$ in $M_{p_{i+1}q_{i+1}}$ such that conjugation by $u_t$ defines an embedding of $C^*(K)$ in $Z_{p_{i+1},q_{i+1}}$. It follows that $K$ is groupoid isomorphic (via a Haar preserving groupoid isomorphism) to a subquotient $K'$ of a subquotient $H_{i+1}$ of $G^{uW}_{n_{i+1}'}$ whose groupoid $C^*$-algebra is exactly $Z_{p_{i+1},q_{i+1}}$. Let $\psi:K\to K'$ be the groupoid isomorphism and let $q:H_{i+1}\to K'$ be the partial quotient map. 

Observe now that the partial morphism $\phi^{i+1}_{i}:H_{i+1}\to H_i$ given by the composition $\Phi \circ \psi\circ q$ is exactly the morphism $\phi_{i,i+1}$ defined in the proof of Proposition 2.5 in \cite{JS}. It follows that the generalized inverse system $\{H_i= ,\phi^i_j\}$ is a groupoid model for the inductive system constructed in the proof of Proposition 2.5 in \cite{JS}.

The groupoid $G_{\mathcal{Z}}$ is an \'etale equivalence relation because each groupoid $H_n$ is an \'etale equivalence relation by Proposition \ref{equivrelation}. 
 
\end{proof}

\begin{Remark}
There is another interesting construction of $\mathcal{Z}$ given in Theorem 3.4 in \cite{RW} by considering a self map $\phi$ of an infinite prime dimension drop algebra $Z_{p,q}$ which factors through $\mathcal{Z}$ as follows $Z_{p,q}\to \mathcal{Z} \to Z_{p,q}$. It would be very interesting to understand this morphism at the level of groupoids. As $\mathcal{Z}$ is the inductive limit of $Z_{p,q}$ with constant bonding map $\phi$, this would give yet another groupoid model for $\mathcal{Z}$ by considering the generalized inverse limit of the associated groupoids.
\end{Remark}

The convenience of our constructions is that one can use essentially the same type of proof as in Theorem \ref{JiangSu} to prove that there exists a groupoid with groupoid $C^*$-algebra equal to the Razak-Jacelon algebra as one should be able to do as the constructions for those algebras are essentially the same. 

\begin{Definition}
For every pair of natural numbers  $n$ and $n'$ with   $n|n'$ and $a = n'/n -1 >0$, recall that the \textbf{building block algebras} $A(n,n')$ are of the form

$$ A(n,n') = \{f\in C(\interval,M_{n'}): f(0) = \text{diag}(c,c, \hdots,c, 0) \text{ and} f(1)= \text{diag}(c,c,\hdots c) \text{ where } c\in M_n\}.$$
\end{Definition}

\begin{Definition}\label{buildingblockmodels}
For every pair of natural numbers  $n$ and $n'$ with   $n/n'$ and path $u_t:\interval \to U_n$, let $G^u(n,n')$ denote the groupoid obtained using Proposition \ref{cutandpaste} which is a subquotient of $G_{n'}^u(\interval)$ whose $C^*$-algebra is $A(n,n')$. For the constant path $u_t:\interval \to \{\text{id}_n\}$, we denote the groupoid $G^u(n,n')$ by $G(n,n')$. We call the groupoids $G^u(n,n')$  \textbf{building block groupoids}.
\end{Definition}

\begin{Definition}[Jacelon \cite{J}]
The Razak-Jacelon algebra is the inductive limit of any sequence $A_1 \xrightarrow{\phi_1} A_2 \xrightarrow{\phi_2} A_3 \xrightarrow{\phi_3}  \hdots $, where $A_n = A(m_n,(a_n+1)m_n)$ is a building block  algebra such that the connecting morphism $\phi_{m,n} = \phi_{n-1} \circ \hdots \circ \phi_{m+1} \circ \phi_m:A_m\to A_n $ is an injective morphism of the form 

\begin{center}
$   \phi_{m,n}(f) = 
u^*\begin{pmatrix} 
f\circ \xi_1 & 0 & 0 & \hdots & 0 \\
0 & f\circ \xi_2 & 0 & \hdots & 0 \\
0 & 0 & f\circ \xi_3 & \hdots & 0 \\
\vdots & \vdots & \vdots & \ddots & 0\\
0 & 0& 0 & \hdots & f \circ \xi_k
\end{pmatrix}u
$
\end{center}

where $f\in A_m$, $u$ is a continuous path of unitaries in $U_{M_{p_nq_n}}$, $\xi_i$ is a sequence of continuous paths in $\interval$, each satisfying 

$$|\xi_i(x) -\xi_i(y)| \leq 2^{n-m} \hspace{1cm} \forall x,y\in \interval. $$

and 
$$\bigcup_{i=1}^k \xi_i(\interval) = \interval$$
\end{Definition}

\begin{Theorem}\label{RJ}   
There exists a generalized inverse sequence $\{H_i= G^{u_j}((n_j,(a_j+1)n_j)),\psi^i_j\}$ of building block groupoids  whose induced direct system of $C^*$-algebras is equal to the directed system constructed in the proof of Proposition 3.1 in \cite{J}. It follows that the groupoid $C^*$-algebra of the generalized inverse limit $\varprojlim_i H_i$ is the Razak-Jacelon algebra $\mathcal{W}$. Moreover, $G_W$ is an \'etale equivalence relation on a locally compact and non-compact second countable space with covering dimension at most 1..
\end{Theorem}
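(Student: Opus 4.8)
The plan is to run the proof of Theorem \ref{JiangSu} essentially verbatim, replacing the prime dimension drop algebras $Z_{p_i,q_i}$ and their groupoid models $G^{u}_{p_i,q_i}$ by the building block algebras $A(n_i,(a_i+1)n_i)$ and the building block groupoids $G^{u}(n_i,(a_i+1)n_i)$ of Definition \ref{buildingblockmodels}. Concretely, I would construct the inverse sequence $\{H_i,\psi^i_j\}$ by induction, maintaining two invariants: (i) $H_i=G^{u_i}(n_i,(a_i+1)n_i)$ for some continuous path of unitaries $u_i$, realized by Proposition \ref{cutandpaste} as a subquotient of $G^{u_i}_{(a_i+1)n_i}(\interval)$; and (ii) the induced map $\psi_{j,i}$ of $\psi^i_j$ is exactly the connecting morphism $\phi_{j,i}$ appearing in the proof of Proposition 3.1 of \cite{J}, in particular it has the prescribed block-diagonal-plus-conjugation form with paths $\xi_\ell$ satisfying the stated Lipschitz bound and $\bigcup_\ell\xi_\ell(\interval)=\interval$.

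For the inductive step I would, exactly as in the Jiang--Su case, introduce an auxiliary groupoid $K$ obtained as a Haar system preserving quotient (via the Quotient Criterion, Proposition \ref{Haarquotient}) of a disjoint union $K_0$ of finitely many stretched and squashed copies of $H_i$: copies of type $L^1$ (the part of $H_i$ over $[0,1/2]$, rescaled to $\interval$), $L^2$ (the fibre of $H_i$ over $1/2$, crossed with $\interval$), and $L^3$ (the part over $[1/2,1]$, rescaled), with multiplicities and endpoint identifications dictated by Jacelon's numerology --- the analogues of the integers $r_0,r_1$ in the Jiang--Su argument --- so that the endpoint conditions of $A(n_{i+1},(a_{i+1}+1)n_{i+1})$ are produced: a genuine zero block at $t=0$ in $\mathrm{diag}(c,\dots,c,0)$ and the full block-diagonal form $\mathrm{diag}(c,\dots,c)$ at $t=1$. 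One checks that $C^*(K)$ consists of the appropriate functions, that the induced map of the resulting partial morphism $\Phi\colon K\to H_i$ is the block-diagonal embedding $f\mapsto\mathrm{diag}(f\circ\xi_1,\dots,f\circ\xi_k)$, and then, as in \cite{J}, that there is a path of permutation unitaries $u_t$ whose conjugation carries $C^*(K)$ into $A(n_{i+1},(a_{i+1}+1)n_{i+1})$; this conjugation is realized by a Haar system preserving isomorphism of $K$ onto a subquotient $K'$ of a subquotient $H_{i+1}:=G^{u_{i+1}}(n_{i+1},(a_{i+1}+1)n_{i+1})$ of $G^{u_{i+1}}_{(a_{i+1}+1)n_{i+1}}(\interval)$. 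The bonding map $\psi^{i+1}_i$ is the appropriate composition of the partial quotient map $H_{i+1}\to K'$, the isomorphism $K'\cong K$, and $\Phi$; its induced map is $\phi_{i,i+1}$. By Theorem \ref{mainthm.limitexists} the inverse limit $G_{\mathcal W}=\varprojlim_i H_i$ exists in $\mathcal{G}$, and by Theorem \ref{continuousfunctor} its groupoid $C^*$-algebra is $\varinjlim_i A(n_i,(a_i+1)n_i)=\mathcal{W}$.

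It remains to verify the structural "moreover" claims. Each $H_i$ is an \'etale equivalence relation: it is a subquotient of $G^{u_i}_{(a_i+1)n_i}(\interval)$, which is a twisted copy of the \'etale principal groupoid $G_{(a_i+1)n_i}\times\interval$, and the subquotients produced by Proposition \ref{cutandpaste} together with the endpoint identifications above keep the groupoid principal with open object space; hence Proposition \ref{equivrelation} shows $G_{\mathcal W}$ is an \'etale equivalence relation. For the dimension bound, the object space of each $H_i$ is a subquotient of $\{1,\dots,(a_i+1)n_i\}\times\interval$, so has covering dimension at most $1$; since $G_{\mathcal W}^{(0)}$ is an increasing union of inverse limits of such (locally) compact spaces and the covering dimension of an inverse limit of compact Hausdorff spaces is bounded by the supremum of the dimensions, we get $\dim G_{\mathcal W}^{(0)}\le 1$. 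Finally, $\mathcal{W}$ is non-unital (in fact stably projectionless), so $G_{\mathcal W}^{(0)}$ cannot be compact --- were it compact, $C^*(G_{\mathcal W})$ would be unital --- which gives non-compactness; second countability is inherited from the $H_i$.

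The one place requiring genuine care, and the expected main obstacle, is transcribing Jacelon's numerology for Proposition 3.1 into the groupoid picture: unlike the symmetric "identity tensor matrix" conditions at both ends of a dimension drop algebra, $A(n,n')$ has a bona fide zero block at $t=0$, so the split of the $k$ summands of $K_0$ into the three types $L^1,L^2,L^3$ (plus the bookkeeping of which blocks degenerate at which endpoint), the choice of $n_{i+1}$ and $a_{i+1}$, and the conjugating permutation path $u_t$ must be arranged so that the quotient $C^*$-algebra is exactly $A(n_{i+1},(a_{i+1}+1)n_{i+1})$ while the connecting maps stay injective with the required Lipschitz control. Once this is matched with \cite{J}, the rest is a routine adaptation of the argument already given for Theorem \ref{JiangSu}.
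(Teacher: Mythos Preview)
Your proposal is correct and follows essentially the same approach as the paper: both construct the auxiliary groupoid $K$ as a Haar system preserving quotient of a disjoint union of stretched/squashed copies of $H_i$ of types $L^1,L^2,L^3$, with multiplicities coming from Jacelon's numerology (in the paper, $b$ copies of $L^1$, one $L^2$, and $b-1$ copies of $L^3$, where $b=2a_l+1$), then conjugate by Jacelon's path of permutation unitaries to realize $K$ as a subquotient of $H_{i+1}=G^{u_{i+1}}(n_{i+1},(a_{i+1}+1)n_{i+1})$, and compose. Your treatment of the ``moreover'' claims (\'etale equivalence relation via Proposition \ref{equivrelation}, the dimension bound, and non-compactness from non-unitality of $\mathcal{W}$) is in fact more detailed than what the paper's own proof records.
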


\begin{proof}

Let $H_1$ be a building block groupoid model (see Definition \ref{buildingblockmodels}) for $A_1$ as in the first line of the proof of Proposition 3.1 in \cite{J} and suppose that $H_j$ and $\psi^k_j$ have been defined for $1\leq j\leq k \leq l$ so that $H_j$ is a building block model for $A_j$ and  the induced map of $\psi^k_{j}$ is equal to the bonding map $\varphi_{jk}$ defined by Jacelon in the proof of Proposition 3.1 in  \cite{J}.

Let $a_l = n_l'/n_l-1$ and notice that $a_l >0$. Let $b=2a_l +1$, $n_{l+1} = bn_l$,  and $m=2b$.

For any natural numbers $k|l$, we define embeddings $\iota_j:M_{k}\hookrightarrow M_l \cong M_{l/k}(M_{k})$ for $j=1,2, \hdots l/k$ where $\iota_j$ is the defined by

\begin{center}
$  \iota_j(T)\hspace{.1cm} = \hspace{.1cm} 
\bordermatrix{~ & 1 & 2 & \hdots & j & \hdots & l/k  \cr
              1& 0 & 0 & \hdots & 0 & \hdots & 0 \cr
              2 & 0 & 0 & \hdots & 0 & \hdots & 0 \cr
              \vdots & \vdots  & \vdots & \vdots & \vdots & \vdots & \vdots \cr
              j & 0 & 0 & \hdots & T  & \hdots  &0  \cr
              \vdots & \vdots  & \vdots & \vdots & \vdots & \vdots & \vdots \cr
              l/k & 0  & 0 & 0 & 0 & 0 & 0 \cr}
$
\end{center}

Next define a path $W_t$ of unitaries in $M_{p_{i+1}q_{i+1}}$ as follows.  Let $\eta_j$ be some element of the standard basis element of $\CC^{p_{i+1}q_{i+1}}$ and choose $n$ to be the unique integer such that $np_iq_i \le j < (n+1)p_iq_i$. We first define $k$ paths $\xi_i:\interval\to \interval$ by


\begin{align*} 
   \xi_i(t) =   \begin{cases} 
      t/2 & 1\leq i\leq b  \text{ and } \\    
      1/2 & r_1 i = b+1 \text{ and } \\         
      (t+1)/2 &  b+1 < i < 2b \text{ and }  \\
   \end{cases}
\end{align*}

and then we define a path of unitaries $W_t$ by $$W_t(\eta_j) = \iota_{n}(U_{\xi_n(t)})(\eta_j).$$

Use Proposition \ref{cutandpaste} to construct a  groupoid $K$ which is a subquotient of $G^W_{n_{i+1}'}$  whose $C^*$-algebra is the subalgebra of $C(\interval, M_{n_{i+1}'})$ consisting of functions $f$ such that

\begin{center}
$  f(0) \hspace{.1cm} = \hspace{.1cm} 
\bordermatrix{~ & 1 & 2 & \hdots & b & b+1& \hdots & 2b  \cr
              1&  \text{diag}(M,M,\hdots,M) & 0 & \hdots & 0 & 0 & \hdots & 0 \cr
              2 & 0 & \text{diag}(M,M,\hdots,M) & \hdots & 0 & 0&  \hdots & 0 \cr
              \vdots & \vdots  & \vdots & \vdots & \vdots & \vdots & \ddots & \vdots \cr
              b & 0 & 0 & \hdots & \text{diag}(M,M,\hdots,M)  & 0&\hdots  &0  \cr
              b+1 & 0 & 0 & \hdots & 0 & N & \hdots  &0  \cr
              \vdots & \vdots  & \vdots & \vdots & \vdots & \vdots & \ddots & \vdots \cr
              2b & 0  & 0 & 0 & 0 & 0 & 0 & N\cr}
$
\end{center}

and 

\begin{center}
$  f(1) \hspace{.1cm} = \hspace{.1cm} 
\bordermatrix{~ & 1 & 2 & \hdots & b+1 & b+2& \hdots & 2b  \cr
              1& N & 0 & \hdots & 0 & 0 & \hdots & 0 \cr
              2 & 0 & N & \hdots & 0 & 0&  \hdots & 0 \cr
              \vdots & \vdots  & \vdots & \vdots & \vdots & \vdots & \ddots & \vdots \cr
              b+1 & 0 & 0 & \hdots & N  & 0&\hdots  &0  \cr
              b+2 & 0 & 0 & \hdots & 0 & \text{diag}(M,M, \hdots, M,0_{n_i}) & \hdots  &0  \cr
              \vdots & \vdots  & \vdots & \vdots & \vdots & \vdots & \ddots & \vdots \cr
              2b & 0  & 0 & 0 & 0 & 0 & 0 & \text{diag}(M,M, \hdots, M,0_{n_i}) \cr}
$
\end{center}
for fixed matrices $M \in M_{n_i}$ and $N\in M_{n_i'}$.
Notice that $K$ is just a Haar system preserving quotient of the disjoint union of $2b$ groupoids where the first $b$ of them are equal to the subgroupoid of $H_i$ given by restricting the interval coordinates to the values $0\leq t\leq 1/2$, but stretched by a factor of $2$ so as to make them bona fide interval groupoids. The $b+1$'st groupoid is exactly equal to the $1/2$, level of $H_i$, but thickened up by the interval in the obvious way. The last $b-1$ groupoids are equal to the subgroupoid of $H_i$ given by restricting the interval coordinates to the values $1/2\leq t\leq 1$, but again these are stretched by a factor of $2$ so as to make them bona fide interval groupoids. $K$ is the quotient by identifying the 0-coordiantes of the first b groupoids and then identifying the 1-coordinate of the last b groupoids. The first $b$ groupoids maps cannonically to $H_i$ by the morphism which puts them into the first half of the groupoid $H_i$. The $b+1$'st groupoid maps to $H_i$ by squashing the interval coordinates to a point and then placing the groupoid at the $1/2$-level of $H_i$ and, lastly, the last $b-1$ groupoids maps to $H_i$ by placing them cannonically into the subgroupoid whose interval coordinates are between $1/2$ and $1$. Let $\Phi$ denote the map just described and notice that $\Phi$ is Haar system preserving and, moreover, the morphism induced by $\Phi$ is of the form

\begin{center}
$   f \to 
\begin{pmatrix} 
f\circ \xi_1 & 0 & 0 & \hdots & 0 \\
0 & f\circ \xi_2 & 0 & \hdots & 0 \\
0 & 0 & f\circ \xi_3 & \hdots & 0 \\
\vdots & \vdots & \vdots & \ddots & 0\\
0 & 0& 0 & \hdots & f \circ \xi_k
\end{pmatrix}
$
\end{center}

As noted by Jacelon in the proof of Proposition 3.1 in \cite{J}, there exists a path of permutation unitaries $u_t$ in $M_{(b+1)n_2}$ such that conjugation by $u_t$ defines an embedding of $C^*{K}$ in $A(n_{i+1},n_{i+1}')$. It follows that $K$ is groupoid isomorphic (via a Haar preserving groupoid isomorphism) to a subquotient $K'$ of subquotient $H_{i+1}$ of $G^{uW}_{n_{i+1}'}$ whose groupoid $C^*$-algebra is exactly $A(n_{i+1},n_{i+1}')$. Let $\psi:K\to K'$ be the groupoid isomorphism and let $q:H_{n_{i+1}'}\to K'$ be the partial quotient map. 

Observe now that the partial morphism $\phi^{i+1}_{i}:H_{i+1}\to H_i$ given by the composition $\Phi \circ \psi\circ q$ has exactly the morphism $\phi_{i,i+1}$ defined in the proof of Proposition 3.1 in \cite{J}. It follows that the generalized inverse system $\{H_i= ,\phi^i_j\}$ is a groupoid model for the inductive system built by Jacelon in Proposition 3.1 of \cite{J}.

\end{proof}


\subsection{Remarks on Self Absorption}

The reader should reference Section \ref{Examples} for the definition and basic examples of self absorbing groupoids.

Unfortunately, being that our groupoid model $G_\mathcal{Z}$ for $\mathcal{Z}$ is isomorphic to the model in \cite{Li}, the object space $\G0_{\mathcal{Z}}$ is 1-dimensional and hence the object space of $G_{\mathcal{Z}}\times G_{\mathcal{Z}}$ will be 2-dimensional. The covering dimension of $G_\mathcal{Z}^\infty$ will be infinite. For purely topological considerations, a groupoid $G$ is self absorbing  only if $\G0$ has covering dimension $0$ or $\infty$. The authors would like to thank Kang Li for pointing out that Xin Li  in \cite{Li} shows that no groupoid model for $\mathcal{Z}$ can have object space with covering dimension zero. To see this, notice that any groupoid model $G$ for $\mathcal{Z}$ must be \'etale with compact object space (as $\mathcal{Z}$ is unital) and so $C(\G0)$ embeds into $C^*(G) = \mathcal{Z}$. However, $C(\G0)$ contains lots of projections if $\G0$ is zero-dimensional and this cannot be $\mathcal{Z}$ is projectionless.

We clearly have a self absorbing groupoid model for $\mathcal{Z}$ given by the groupoid model for $\bigotimes_{\NN}\mathcal{Z}$ using $G_{\mathcal{Z}}$ and the techniques of Subsection \ref{infinitetensorpowers}.  Using this idea, it does follow that every self absorbing $C^*$-algebra which admits a groupoid model does admit a self absorbing groupoid model, but it remains to be seen how useful such models are.

\section*{Appendix A: Full Gelfand Duality in the Commutative Case}\label{gelfand}

The purpose of this appendix is show how our concept of partial morphisms allows one to fix the gap that exists with Gelfand duality, namely, the fact that the inclusion of ideals cannot always be modeled using pullback maps of proper continuous functions. Even though there are models of the category of commutative $C^*$-algebras using topological spaces and partial maps, we believe it is important to note  that our extension of the Gelfand duality functor to groupoids and partial morphisms naturally completes the classical duality. As this appendix is of independent interest, we make an effort to make it self contained and independent of the rest of the paper. Recall that if $A$ is a $C^*$-algebra then an \textbf{approximate unit} for $A$ is a net $\{e_\alpha,\alpha\in \Delta\}\subset A$ of self adjoint elements such that for every $a\in A$ we have $\lim_{\alpha} |a-e_\alpha a| = \lim_\alpha |a-ae_\alpha| = 0$.

The Gelfand duality functor sends every locally compact space $X$ to the $C^*$-algebra $C_0(X)$ and to every proper continuous function $f:X\to Y$ the pullback morphism $f^*:C_0(Y)\to C_0(X)$. It is a much celebrated result that each commutative $C^*$-algebra is of the form $C_0(X)$ for a unique locally compact space $X$ and every  morphism that maps approximate units to approximate units is induced by a unique proper and continuous function. This success perhaps hides another very important class of morphisms of commutative $C^*$-algebras that actually can be modeled at the level of locally compact spaces, namely the $*$-morphisms such that the image does not contain an approximate unit for the codomain algebra. 

Notice that the inclusion $U\subset X$ of an open subspace induces a nonunital embedding $C_0(U) \hookrightarrow C_0(X)$ given by extending continous functions in $C_0(U)$ to be $0$ on $X\setminus U$. This is a perfectly natural $C^*$-morphism to consider. Even more generally, we can consider induced morphisms that come from both extension and pullbacks if we consider \textbf{partial proper continuous maps} defined these as follows: Let $X$ and $Y$ be locally compact Hausdorff spaces. A partial proper continuous function from $X$ to $Y$ is a pair $(f,U)$ where $U
\subset X$ is open  and $f:U\to Y$ is a proper and continuous function. Notice that the pullback $f^*:C_0(Y)\to C_0(U)$ composed with the inclusion $C_0(U)\to C_0(X)$ just described defines a $*$-morphism from $C_0(Y)$ to $C_0(X)$ and it does not need to be unital or a $*$-embedding. We will call the composition $ C_0(Y) \overset{f^*}\to  C_0(U) \xrightarrow{\text{canonical}} C_0(X)$ the \textbf{induced morphism}. 

Here is an example: Notice that there is only one continuous map from the discrete space $\{x_1,x_2\}$ to the discrete space $\{y\}$ and the pullback morphism is given by $\lambda \to (\lambda,\lambda)$ mapping $\CC \to \CC\oplus \CC$. It is \emph{impossible} for the morphism $\CC \to \CC\oplus \CC$ given by $\lambda \to (\lambda,0)$ to be the pullback of a continuous function $f:\{x_1,x_2\} \to \{y\}$. But this morphism is induced by the partial mapping which maps $x_1 $ to $y$ and does nothing to $x_2$.

Let $\mathcal{T}$ denote the category of locally compact spaces with partial proper continuous functions; i.e. it is the full subcategory of $\mathcal{G}$ consisting of locally compact spaces with partial morphisms between them. Let $Comm$ denote the category of commutative $C^*$-algebras with $*$-homomorphisms.

\begin{theorem*}\label{gelfandinfull}
The extension of the functor $\Gamma_c$, which we will denote by $\Gamma_c^e$, that maps each locally compact space $X$ to $C_0(X)$ and sends each partial proper and continuous map to its induced morphism is an opposite equivalence of the categories $\mathcal{T}$  and $Comm$.
\end{theorem*}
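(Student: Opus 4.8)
The plan is to establish the opposite equivalence by exhibiting $\Gamma_c^e$ as fully faithful and essentially surjective, using the classical Gelfand duality as a black box and reducing the new content to the behavior of partial maps. First I would recall that classical Gelfand duality gives a bijection between locally compact Hausdorff spaces and commutative $C^*$-algebras on objects, so essential surjectivity of $\Gamma_c^e$ is immediate: every commutative $C^*$-algebra is $C_0(X)$ for some $X$. The work is entirely in the morphisms.

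The central step is to analyze an arbitrary $*$-homomorphism $\Phi: C_0(Y) \to C_0(X)$ and show it factors uniquely as a canonical inclusion after a proper pullback. Here I would use the standard fact that the ideal $\Phi(C_0(Y)) \cdot C_0(X) = \overline{\Phi(C_0(Y))C_0(X)}$ is a closed ideal of $C_0(X)$, hence of the form $C_0(U)$ for a unique open subset $U \subseteq X$; this $U$ is the ``support'' of $\Phi$. One checks that $\Phi$ corestricts to a $*$-homomorphism $C_0(Y) \to C_0(U)$ whose image contains an approximate unit for $C_0(U)$ (by construction of $U$ as the ideal generated by the image). Now the classical nondegenerate Gelfand correspondence applies: there is a unique proper continuous map $f: U \to Y$ with $f^* = \Phi$ corestricted. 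Composing $f^*$ with the canonical inclusion $C_0(U) \hookrightarrow C_0(X)$ recovers $\Phi$, so $\Phi = \Gamma_c^e(f,U)$. For faithfulness, one observes that $U$ and $f$ are each uniquely determined: $U$ by the ideal $\Phi(C_0(Y))C_0(X)$, and $f$ by uniqueness in the nondegenerate case. Conversely, distinct partial maps $(f,U) \neq (g,V)$ give distinct induced morphisms because they have distinct supports or distinct corestrictions.

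It then remains to verify that $\Gamma_c^e$ is a contravariant functor at all, i.e. that it respects composition of partial maps. Given $(f,U): X \to Y$ and $(g,V): Y \to Z$, the composite has domain $f^{-1}(V)$ and map $g \circ f|_{f^{-1}(V)}$; one must check that the induced morphism of the composite equals the composite of the induced morphisms. This is a diagram chase: the canonical inclusion $C_0(f^{-1}(V)) \hookrightarrow C_0(U)$ intertwines $f^*$ restricted appropriately, and one uses that pullback along $f$ sends the ideal $C_0(V) \subseteq C_0(Y)$ into $C_0(f^{-1}(V)) \subseteq C_0(U)$. I would also note the identity $(\mathrm{id}_X, X)$ maps to $\mathrm{id}_{C_0(X)}$, which is clear. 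Since the paper has already set up $\mathcal{T}$ as a full subcategory of $\mathcal{G}$ and verified that composition in $\mathcal{G}$ works this way, this functoriality can be quoted rather than reproved; the genuinely new content is just the morphism bijection above.

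The main obstacle I anticipate is the clean identification of the support ideal and the verification that the corestricted map is nondegenerate in the precise sense needed to invoke the classical theorem — one must be a little careful that $\overline{\Phi(C_0(Y))C_0(X)} = C_0(U)$ really does hold with $\Phi(C_0(Y))$ mapping to something containing an approximate unit of $C_0(U)$, rather than merely a dense subset with no approximate unit. This is true but deserves a short argument using that $\Phi$ is a $*$-homomorphism (so its image is a $C^*$-subalgebra, hence has its own approximate unit, whose image under the inclusion works). Everything else is bookkeeping with ideals in commutative $C^*$-algebras and the open-subset/closed-ideal dictionary.
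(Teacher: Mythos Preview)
Your proposal is correct and follows essentially the same strategy as the paper: identify an open $U\subseteq X$ so that $\Phi$ corestricts to a nondegenerate $*$-morphism $C_0(Y)\to C_0(U)$, then invoke the classical Gelfand correspondence to produce the unique proper continuous $f:U\to Y$. The only cosmetic difference is in how $U$ is extracted---you use the closed-ideal/open-set dictionary via $\overline{\Phi(C_0(Y))C_0(X)}=C_0(U)$, whereas the paper passes to $\ell^\infty(X)$ and reads off $U$ as the support of the strong-operator limit of an approximate unit for the image $\Phi(C_0(Y))$; these pick out the same set.
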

\begin{proof}
Let $A= C_0(Y)$ and $B= C_0(X)$ be commutative $C^*$-algebras and let $\phi:A\to B$ be a $*$-morphism. Notice that $C:=\phi(A)\subset B$. We will show that $C$ determines an open subset $U_A\subset X$. Let $\overline{C}$ denote the closure of $C$ in $\ell^\infty(X)$ (after embedding $C_0(X)$ into $\ell^\infty(X)$ via the universal representation) and notice that the s.o. limit (i.e. limit in pointwise convergence topology) of an approximate identity for $C$ converges to the unit $1_C$ of $\overline{C}$. $1_C$ is a projection in $\ell^{\infty}(X)$ and hence corresponds to the characteristic function of some set $U\subset X$. Notice that $U$ is open in $X$ as it is the union of the supports of an approximate identity for $C$. Notice now that $C$ is a $C^*$-subalgebra of $C_0(U)$, when viewed as a subalgebra of $C_0(X)$ by extending functions to be $0$ outside of $U$, and contains an approximate unit and hence there must exist a unique proper continuous mapping $f:U\to Y$ such that $f^*:C_0(Y)\to C_0(U)$ is equal to $\phi:A \to C_0(Y) \subset C_0(X)$. It is straightforward to see that $U$ is the unique subset of $X$ such that $C\subset C_0(U) \subset C_0(X)$ with the first of those inclusions being an approximately unital embedding. The uniqueness of $f$ is by Gelfand duality.
\end{proof}

\begin{remark}
One can say that a large part of this paper is better understanding the $C^*$-algebraic interpretation of groupoids. Because a morphism $\phi:A\to B$ of algebras are really of the form $\phi:A \to \phi(A)$ composed with the inclusion $\phi(A) \hookrightarrow B$, it makes sense that this behavior should be reflected at the topological (or groupoid) level.
\end{remark}

\section*{Appendix B: Increasing Unions of Measure Spaces}\label{measureappendix}

As the proof of Theorem \ref{mainthm.limitexists} shows, inverse limits of  inverse systems in $\mathcal{G}$ are actually increasing unions of inverse limits. As is usual, the hardest part of approximation of groupoids with Haar systems is approximating the Haar systems. We have shown in the proof of Theorem A in \cite{AG} how to take inverse limits of regular Radon measures (which we will replicate in the proof of Theorem \ref{mainthm.limitexists}). The purpose of this appendix is to show how to define a regular Radon measure on an increasing union of regular Radon measures spaces.

\begin{definition}\label{directmeasures}
We say that $(X_\alpha, \Omega_\alpha,\mu_\alpha,p^\alpha,\beta,A)$ is a \textbf{direct system of Borel measure spaces} if
\begin{enumerate}
    \item $A$ is directed
    \item  $X_\alpha$ is a locally compact Hausdorff space for each $\alpha$.
    \item $\mu_\alpha$ is a regular Radon measure for each $\alpha$.
    \item $p^\alpha_\beta:X_\alpha\to X_\beta$ is an inclusion of an open subset for all $\beta\ge\alpha$.
    \item \label{compatiblemeasures} $(p^\alpha_\beta)_*\mu_\alpha = \mu_\beta|_{p^\alpha_\beta(X_\alpha)}$ where $\mu_\beta|_{p^\alpha_\beta(X_\alpha)}$ represents the measure restricted to the submeasure space $p^\alpha_\beta(X_\alpha)$.
    \item $p^\alpha_\alpha = \text{id}_{X_{\alpha}}$ for all $\alpha$.
    \item $p^\alpha_\beta\circ p^\beta_\gamma = p^\alpha_\gamma$ for all $\alpha \ge \beta \ge \gamma$.
\end{enumerate}
\end{definition}

\begin{proposition}\label{getthemmeasurestoworkout}
Let $(X_\alpha, \Omega_\alpha,\mu_\alpha,p^\alpha,\beta,A)$ be a direct system of Borel measure spaces. Denote $X := \bigcup_{\alpha} X_\alpha$ in  and $\Omega$ be the smallest $\sigma$-algebra generated by  $\bigcup_{\alpha}\Omega_\alpha$ . $\Omega$ is contained in the Borel sigma-algebra on $X$ and, moreover, contains all compact  subsets of $X$. The union of the measures $\mu_\alpha$ (when viewed as partial functions in $\Omega\times [0,\infty]$ ) extends uniquely to a measure $\mu'$ on $X$. There exists a unique regular Radon measure $\mu$ on $X$ such that integration of $f\in C_c(X)$ against $\mu'$ is equal to integration against $\mu$.
\end{proposition}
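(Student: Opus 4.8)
The plan is to push everything through the single principle that any compact subset of $X$ --- and hence the support of any $f\in C_c(X)$ --- lies inside one of the $X_\alpha$, and then to invoke the Carath\'eodory--Hahn extension theorem for $\mu'$ and the Riesz--Markov--Kakutani theorem for $\mu$.

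First I would dispose of the topology. As in the proof of Theorem \ref{mainthm.limitexists}, $X=\bigcup_\alpha X_\alpha$ (with the direct limit topology) is locally compact Hausdorff, each $X_\alpha$ is open in $X$ with its original topology, so $\Omega_\alpha=\mathcal B(X_\alpha)=\{B\in\mathcal B(X):B\subseteq X_\alpha\}$ and hence $\Omega=\sigma\bigl(\bigcup_\alpha\Omega_\alpha\bigr)\subseteq\mathcal B(X)$. Since $A$ is directed and the $X_\alpha$ cover $X$, any compact $K\subseteq X$ is contained in a single $X_\alpha$ (take a finite subcover, then use directedness), where $K$ is closed and hence Borel, so $K\in\Omega_\alpha\subseteq\Omega$; the same localization shows every $f\in C_c(X)$ is $\Omega$-measurable, so the last clause of the Proposition is meaningful. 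I would also record that $\mathcal R:=\bigcup_\alpha\Omega_\alpha$ is a ring of sets (given $B_i\in\Omega_{\alpha_i}$, pick $\gamma\ge\alpha_1,\alpha_2$, so $B_1,B_2\in\Omega_\gamma$) that generates $\Omega$.

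Next comes the measure-theoretic core. Condition (5) of Definition \ref{directmeasures} says that the pushforward along the open inclusion $p^\alpha_\beta$ is restriction, i.e.\ $\mu_\beta|_{\Omega_\alpha}=\mu_\alpha$ whenever $\beta\ge\alpha$; passing through a common upper bound then shows that $\mu_\alpha$ and $\mu_\beta$ agree on $\Omega_\alpha\cap\Omega_\beta$, so $\mu_0:=\bigcup_\alpha\mu_\alpha$ is a well-defined $[0,\infty]$-valued set function on $\mathcal R$. It is a premeasure: if $B=\bigsqcup_n B_n$ with $B,B_n\in\mathcal R$, then $B\in\Omega_\alpha$ for some $\alpha$, and each $B_n\subseteq B\subseteq X_\alpha$ forces $B_n\in\mathcal B(X_\alpha)=\Omega_\alpha$, so countable additivity of $\mu_0$ on $\mathcal R$ is just countable additivity of the single measure $\mu_\alpha$. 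By Carath\'eodory--Hahn, $\mu_0$ extends to a measure $\mu'$ on $\Omega=\sigma(\mathcal R)$; concretely one may take $\mu'(B)=\sup_\alpha\mu_\alpha(B\cap X_\alpha)$, a limit along the monotone net $\alpha\mapsto\mu_\alpha(B\cap X_\alpha)$ (monotonicity again from (5)), and checking that this is a measure extending $\mu_0$ reduces to interchanging a supremum over a directed set with a countable sum, both monotone in $[0,\infty]$. Any measure $\nu$ on $\Omega$ restricting to $\mu_0$ on $\mathcal R$ satisfies $\nu(B)\ge\nu(B\cap X_\alpha)=\mu_\alpha(B\cap X_\alpha)$, hence $\nu\ge\mu'$ and equality is automatic wherever $\mu'$ is infinite; full uniqueness of $\mu'$ then follows from the uniqueness clause of the Hahn extension theorem (using that $\mu_0$ is $\sigma$-finite, which holds in all the applications in this paper, where the index set is countable and the spaces are $\sigma$-compact).

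Finally I would build $\mu$ via a positive functional. Set $\Lambda(f)=\int_{X_\alpha}f\,d\mu_\alpha$ for $f\in C_c(X)$ and any $\alpha$ with $\operatorname{supp}f\subseteq X_\alpha$; this is independent of $\alpha$ by the compatibility above and is linear and positive, so the Riesz--Markov--Kakutani theorem yields a unique regular Radon measure $\mu$ on $X$ with $\int_X f\,d\mu=\Lambda(f)$ for all $f\in C_c(X)$. That $\int_X f\,d\mu'=\int_X f\,d\mu$ for such $f$ is immediate: choosing $\alpha$ with $\operatorname{supp}f\subseteq X_\alpha$ and using that $\mu'$ restricted to subsets of $X_\alpha$ equals $\mu_\alpha$ gives $\int_X f\,d\mu'=\int_{X_\alpha}f\,d\mu_\alpha=\Lambda(f)$, and uniqueness of $\mu$ among regular Radon measures is part of Riesz--Markov. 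I expect the main obstacle to be the middle paragraph --- showing $\mu_0$ is genuinely a premeasure on the ring $\mathcal R$ and nailing down uniqueness of its extension to $\Omega$ --- whereas the topological facts are formal, the regularity of $\mu$ is handed to us by Riesz--Markov, and the agreement of $\mu$ with $\mu'$ on $C_c(X)$ is an immediate consequence of the localization principle.
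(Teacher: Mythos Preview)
Your proof is correct and, in its measure-theoretic core, proceeds along a somewhat different route from the paper's. The paper constructs $\mu'$ directly by choosing, for each $B\in\Omega$, a countable decomposition $B=\bigcup_i B_{\alpha_i}$ with $B_{\alpha_i}\in\Omega_{\alpha_i}$, setting $\mu'(B)=\sum_i\mu_{\alpha_i}(B_{\alpha_i})$, and then verifying well-definedness and $\sigma$-additivity by hand via a double-sum manipulation. You instead recognise $\mathcal R=\bigcup_\alpha\Omega_\alpha$ as a ring, observe (by localising a disjoint countable union inside a single $\Omega_\alpha$) that the glued function $\mu_0$ is a genuine premeasure on $\mathcal R$, and invoke Carath\'eodory--Hahn to extend; you also record the explicit formula $\mu'(B)=\sup_\alpha\mu_\alpha(B\cap X_\alpha)$, which the paper does not. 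Your approach is cleaner and more honest about the uniqueness of $\mu'$, correctly flagging the $\sigma$-finiteness hypothesis that the Hahn uniqueness clause needs (the paper's proof asserts uniqueness but does not address it). Both arguments share the same topological localisation principle (compact sets sit in a single $X_\alpha$) and both finish identically, by feeding the resulting positive linear functional on $C_c(X)$ into Riesz--Markov--Kakutani.
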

\begin{proof}
Let $p_\alpha:X_\alpha \to X$ be the inclusion mappings. It is clear that $X$ is locally compact and Hausdorff by the requirement that it is the union of open locally compact Hausdorff subspaces.

Recall that the direct limit of sigma-algebras is just the sigma-algebra generated by the images of the $\sigma$-algebras of the pieces in the inverse system. It is clear that the sigma-algebra generated by the images of the Borel subset of the pieces is contained in the Borel subsets of $X$. However, as we are only allowed to take countable sums of sets coming from $\bigcup_{\alpha}\Omega_\alpha$, we only get open sets which are countable sums of open sets. It is easy to see that every compact set in $X$ is the image of a compact subset of $X_\alpha$ for some $\alpha$, so $\Omega$ clearly contains all the compact subsets of $X$.

Let $\Omega$ denote the Borel subsets of $X$ and  $\mu'$ be the union of the measures $\mu_\alpha$ (viewed as partial functions in $\Omega\times [0,\infty]$. We first extend $\mu'$ to a function $\mu:\Omega\to [0,\infty]$. Let $B \in \Omega \setminus \cup_{\alpha}p_\alpha(\Omega_\alpha)$. Then $B= \cup_{i=1}^{\infty} B_{\alpha_i}$ where $B_{\alpha_i}\in \Omega_{\alpha_i}$. We define $\mu(B) = \sum_{i=1}^{\infty}\mu_{\alpha_i}(A_{\alpha_i})$. The reader can check easily that this is well defined  by condition \ref{compatiblemeasures} of Definition \ref{directmeasures}. To show that $\mu$ is sigma-additive, let $\{C_{i}\}$ be a disjoint countable subset of $\Omega$.  Notice that $\sum_{i=1}^{\infty} \mu(C_i) =  \sum_{i,j=1}^{\infty} \mu_{\alpha_{j}}(C_i\cap p_{\alpha_j}(X_{\alpha_{j}}))$. As $\mu_{\alpha_{j}}$ is sigma-addtive for each $j$, we have that  $\sum_{i=1}^{\infty} \mu_{\alpha_{j}}(C_i\cap p_{\alpha_j}(X_{\alpha_{j}})) = \mu_{\alpha_j}(\bigcup_{i=1}^{\infty}C_i\cap p_{\alpha_j}(X_{\alpha_{j}}))$. Hence we have
$$\sum_{i=1}^{\infty} \mu(C_i) = \sum_j^{\infty} \sum_i ^\infty \mu_{\alpha_j}(C_i\cap p_{\alpha_j}(X_\alpha)) = \sum_{j=1}^\infty \mu_{\alpha_j}(\bigcup_{i=1}^{\infty}C_i\cap p_{\alpha_j}(X_{\alpha_{j}})) = \mu (\bigcup_{i} C_i).$$

The fact that $\mu$ is positive and inner regular is clear. Notice that $\mu$ is locally finite and outer regular because $p_\alpha(X_\alpha)$ is open in $X$ for all $\alpha$ and each $\mu_\alpha$ is locally finite and outer regular. One can show that integration against $\mu$ induces a positive linear functional on $C_c(X)$.  The last assertion follows from the Riesz-Markov-Kakutani theorem.
\end{proof}

\section{Acknowledgements}

 The authors are thankful for many helpful conversations with Elizabeth Gillaspy. Her most helpful insights really helped add a greater depth to both the author's understandings of the subject material. The first author would like to thank Adam Dor-on and Jurij Vol\v ci\v c for many helpful discussions during the modeling using partial morphisms stage of the project.  We are grateful to Kang Li for his suggestions to include the covering dimension of our object space and for bringing \cite{Li} to our attention. We are deeply indebted to Claude Schochet for his many comments and questions; they really helped to elevate the accuracy and depth of this paper.

\end{document}